\newtheorem{thm}{Theorem}[section]
\newtheorem{cor}[thm]{Corollary}
\newtheorem{lem}[thm]{Lemma}
\newtheorem{lemma}[thm]{Lemma}
\newtheorem{prop}[thm]{Proposition}
\newtheorem{definition}[thm]{Definition}
\newtheorem{ex}[thm]{Example}
\newcommand{\propref}[1]{Prop\-o\-si\-tion~\ref{#1}}
\newcommand{\corref}[1]{Cor\-ol\-lary~\ref{#1}}
\let\abs=\envert
\newcommand{\lqp}{L(q:p_1, \ldots, p_n)}
\newtheoremstyle{fancy}{}{}{\itshape}{}{\textsc\bgroup}{.\egroup}{ }{}
\newtheoremstyle{fanci}{}{}{\rm}{}{\textsc\bgroup}{.\egroup}{ }{}
\theoremstyle{fancy}
\newcounter{intro}
\numberwithin{equation}{section} 
\theoremstyle{fanci}
\newcommand{\cref}[1]{Corollary~\ref{#1}}
\let\abs=\envert
\begin{document}

\title[Isospectrality for Orbifold Lens Spaces]{Isospectrality for Orbifold Lens Spaces}

\author[N. S. Bari and E. Hunsicker]{Naveed S. Bari and Eugenie Hunsicker}
\noindent \address{Naveed S. Bari \\ 47 Lloyd Wright Avenue, Manchester M11 3NJ, United Kingdom}
\noindent \email{bari.naveed@yahoo.com}
\noindent \address{Eugenie Hunsicker \\ Department of Mathematics, Loughborough University, Loughborough, LE11 3TU, United Kingdom}
\noindent \email{E.Hunsicker@lboro.ac.uk}
\thanks{{\it Keywords:} Spectral geometry \ Global Riemannian
  geometry \ Orbifolds \ Lens Spaces} 
\thanks
{2000 {\it Mathematics Subject Classification:}
Primary 58J53; Secondary 53C20.}


\begin{abstract}
We answer Mark Kac's famous question \cite{K}, ``can one
hear the shape of a drum?'' in the positive
for orbifolds that are 3-dimensional and 4-dimensional lens spaces; 
we thus complete the answer to this question for orbifold lens spaces 
in all dimensions. We also show that the coefficients of the asymptotic 
expansion of the trace of the heat kernel are not sufficient to determine the above
results.

\end{abstract}

\maketitle

\tableofcontents

\section{Introduction}\label{introduction}

Given a closed Riemannian manifold $(M,g)$, the eigenvalue spectrum of the 
associated Laplace Beltrami operator will be referred to as the spectrum of 
$(M,g)$.  The inverse spectral problem asks the extent to which the spectrum 
encodes the geometry of $(M,g)$.  While various geometric invariants such as 
dimension, volume and total scalar curvature are spectrally determined, 
numerous examples of isospectral Riemannian manifolds, i.e., manifolds with 
the same spectrum, show that the spectrum does not fully encode the geometry.  
Not surprisingly, the earliest examples of isospectral manifolds were manifolds 
of constant curvature including flat tori (\cite{M}), hyperbolic manifolds (\cite{V}), 
and spherical space forms (\cite{I1}, \cite{I2} and \cite{Gi}).  In particular, lens 
spaces are quotients of round spheres by cyclic groups of orthogonal transformations 
that act freely on the sphere.  Lens spaces have provided a rich source of isospectral 
manifolds with interesting properties.  In addition to the work of Ikeda and Yamamoto cited above, 
see the results of Gornet and McGowan \cite{GoM}.

In this paper we generalize this theme to the category of Riemannian orbifolds. 
A smooth \emph{orbifold} is a topological space that is locally modelled on an
orbit space of $\mathbf{R}^{n}$ under the action of a finite group of diffeomorphisms. 
\emph{Riemannian} orbifolds are spaces that are locally modelled on quotients of Riemannian 
manifolds by finite groups of isometries. Orbifolds have wide applicability, for example, in 
the study of 3-manifolds and in string theory \cite{DHVW}, \cite{ALR}.

The tools of spectral geometry can be transferred to the setting of Riemannian
orbifolds by using their well-behaved local structure (see \cite{Chi}, \cite{S1} \cite{S2}).
As in the manifold setting, the spectrum of the Laplace operator of a compact
Riemannian orbifold is a sequence $0 \leq \lambda_{1} \leq \lambda_{2} \leq \lambda_{3} \leq \ldots 
\uparrow \infty$ where each eigenvalue is repeated according to its finite multiplicity. 
We say that two orbifolds are isospectral if their Laplace spectra agree.

The literature on inverse spectral problems on orbifolds is less developed than that for manifolds.  
Examples of isospectral orbifolds include pairs with boundary (\cite{BCDS} and \cite{BW}); isospectral 
flat 2-orbifolds (\cite{DR}); arbitrarily large finite families of isospectral orbifolds (\cite{SSW}); 
isospectral orbifolds with different maximal isotropy orders (\cite{RSW}); and isospectral deformation 
of metrics on an orbifold quotient of a nilmanifold (\cite{PS}).

In the study of inverse isospectral problem, spherical space forms provide a rich 
and important set of orbifolds with interesting results. For the 2-dimensional case, it
is known \cite{DGGW} that the spectrum determines the spherical orbifolds of constant 
curvature \textit{R} $> 0$. In \cite{L}, Lauret found examples in dimensions 5 through 8 
of orbifold lens spaces (spherical orbifold spaces with cyclic fundamental groups) that 
are isospectral but not isometric. For dimension 9 and higher, the author proved the existence
of isospectral orbifold lens spaces that are non-isometric \cite{Ba}. The problem was unsolved
for 3 and 4-dimensional orbifold lens spaces. For 3-dimensional manifold lens spaces Ikeda and 
Yamamoto (see \cite{I1}, \cite{IY} and \cite{Y})proved that the spectrum determines the lens space. 
In \cite{I2}, Ikeda further proved that for general 3-dimensional manifold spherical 
space forms, the spectrum determines the space form. In the manifold case, it is also known that even 
dimensional spherical space forms are only the canonical sphere and the real projective space. For 
orbifold spherical space forms this is not the case. In this article we will prove the following results:

\smallskip
\noindent \textbf{Theorem \ref{thm:6}}
Two three-dimensional isospectral orbifold lens spaces are isometric.

\smallskip
\noindent \textbf{Theorem \ref{thm:7}}
Two four-dimensional isospectral orbifold lens spaces are isometric.

\smallskip
\noindent \textbf{Theorem \ref{thm:8}}
Let $\mathbb{S}^{2n-1}/G$ and $\mathbb{S}^{2n-1}/G'$ be two (orbifold) spherical space forms. Suppose $G$ is cyclic and $G'$ is not cyclic. Then $\mathbb{S}^{2n-1}/G$ and $\mathbb{S}^{2n-1}/G'$ cannot be isospectral.

\smallskip
\noindent The above results will complete the classification of the inverse spectral problem on 
orbifold lens spaces in all dimensions.

In addition to the above theorems, we also prove that the coefficients of the trace of the heat kernel are not sufficient to
prove the above results, i.e., we can have two non-isospectral orbifold lens spaces with identical coefficients of the trace of heat kernel.

\section{Orbifold Lens Spaces}\label{orblenschapt}

In this section we will generalize the idea of manifold lens spaces to orbifold lens spaces. Note that 
lens spaces are special cases of \textit{spherical space forms}, which are \textit{connected complete 
Riemannian manifolds of positive constant curvature 1}. An n-dimensional spherical space form can be 
written as $\mathbb{S}^n/G$ where $G$ is a finite subgroup of the orthogonal group $O(n + 1)$. In fact, 
the definition of spherical space forms can be generalized to allow $G$ to have fixed points making 
$\mathbb{S}^n/G$ an orbifold. Manifold lens spaces are spherical space forms where the $n$-dimensional 
sphere $\mathbb{S}^n$ of constant curvature $1$ is acted upon by a cyclic group of fixed point free 
isometries on $\mathbb{S}^n$. We will generalize this notion to orbifolds by allowing the cyclic group 
of isometries to have fixed points. For details of spectral geometry on orbifolds, see Stanhope \cite{S1} 
and E. Dryden, C. Gordon, S. Greenwald and D. Webb in \cite{DGGW}). 

\subsection{Orbifold Lens Spaces and their Generating Functions}\label{subsection31} 

We now reproduce the background work developed by Ikeda in \cite{I1} and \cite{I2} for manifold spherical space forms. We will note that with slight modifications the results are valid for orbifold spherical space forms. This is the background work we will need to develop our results for orbifold lens spaces.

We will first consider general $2n - 1$ dimensional lens spaces. Let $q$ be a positive integer. Set \[ q_{0} = \begin{cases}
													\frac{q-1}{2} & \qquad \text{if } q \text{ is odd,} \\
													\text{ } \frac{q}{2} & \qquad \text{if } q \text{ is even}.
											    \end{cases} \]
Throughout this section we assume that $q_0 \geq 4$. 

For $n \leq q_0$, let $p_1,\ldots , p_n$ be $n$ integers. Note, if $g.c.d.(p_1,\dots, p_n, q) \ne 1$, we can divide all the $p_i's$ and $q$ by this $gcd$ to get a case where the $gcd = 1$. So, without loss of generality, we can assume $g.c.d.(p_1,\dots, p_n, q) =1$. We denote by $g$ the orthogonal matrix given by 
\[ g = 
\begin{pmatrix}
 R(p_{1} / q) & & \text{ {\huge 0}} \\
  & \ddots & \\
  \text{ {\huge 0}} & & R(p_{n} / q)
\end{pmatrix}
,\]
where 
\begin{equation}\label{eqmatrix}
R(\theta ) = \begin{pmatrix} \cos 2\pi \theta & \sin 2\pi \theta \\ -
\sin 2\pi \theta & \cos 2\pi \theta \end{pmatrix}.
\end{equation}
Then $g$ generates a cyclic subgroup $G = \big \{ g^{l} \big \}_{l=1}^{q}$ of
order $q$ of the special orthogonal group $SO(2n)$ since $\det{g} = 1$. Note that $g$ has eigenvalues $\gamma^{p_1}$, $\gamma^{-p_1}$,$\gamma^{p_2}$, $\gamma^{-p_2}$,..., $\gamma^{p_n}$, $\gamma^{-p_n}$, where $\gamma$ is a primitive $q$-th root of unity. We define the lens space $L(q : p_{1}, \ldots, p_n)$ as follows: 
\[ L(q : p_{1}, \ldots, p_n) = S^{2n-1} / G .\] 

Note that if $gcd(p_i, q) = 1$ $\forall i$, $L(q : p_{1}, \ldots, p_n)$ is a smooth manifold; Ikeda and Yamamoto have answered Kac's question in the affirmative for 3-dimensional manifold lens spaces (\cite{IY}, \cite{Y}). To get an orbifold in this setting with non-trivial singularities, we must have $gcd(p_i, q) > 1$ for some $i$. In such a case $L(q : p_{1}, \ldots, p_n)$ is a good smooth orbifold with $S^{2n-1}$ as its
covering manifold. Let $\pi$ be the covering projection of $\mathbb{S}^{2n-1}$ onto $\mathbb{S}^{2n-1} / G$ 
\[ \pi : \mathbb{S}^{2n-1} \rightarrow \mathbb{S}^{2n-1} / G .\] 
Since the round metric of constant curvature one on $\mathbb{S}^{2n-1}$ is
$G$-invariant, it induces a Riemannian metric on $\mathbb{S}^{2n-1}/G$. Henceforth, the term 
''lens space'' will refer to this generalized definition.
\smallskip
Ikeda proved the following result for manifold spherical space forms. We note that the proof doesn't require the 
groups to be fixed-point free, and reproduce the result for orbifold spherical space forms:

\begin{lem}\label{lemma31}
Let $\mathbb{S}^n/G$ and $\mathbb{S}^n/G'$ be spherical space forms for any integer $n \geq 2$. Then $\mathbb{S}^n/G$ is isometric to $\mathbb{S}^n/G'$ if and only if $G$ is conjugate to $G'$ in $O(n + 1)$.
\end{lem}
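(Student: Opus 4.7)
The plan is to prove both directions separately, with the reverse direction being elementary and the forward direction relying on the orbifold universal cover.

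For the ``if'' direction, suppose $G' = hGh^{-1}$ for some $h \in O(n+1)$. Since $h$ is an isometry of $\mathbb{S}^n$ with the round metric, and since $h(gx) = (hgh^{-1})(hx)$ lies in $G' \cdot hx$ for every $g \in G$, the map $hx \mapsto hx$ descends to a well-defined map $\bar{h} : \mathbb{S}^n/G \to \mathbb{S}^n/G'$ sending $[x]_G$ to $[hx]_{G'}$. Its inverse is induced by $h^{-1}$, and it is a local isometry on the regular set because $h$ is; the isometry extends across the (measure-zero) singular set by continuity, giving an isometry of orbifolds.

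For the ``only if'' direction, suppose $f : \mathbb{S}^n/G \to \mathbb{S}^n/G'$ is an isometry of Riemannian orbifolds. Since $\mathbb{S}^n$ is simply connected and $n \geq 2$, it serves as the universal orbifold cover of both $\mathbb{S}^n/G$ and $\mathbb{S}^n/G'$, with $G$ and $G'$ as the respective orbifold deck groups. The orbifold version of the lifting criterion (see e.g.\ Thurston's lecture notes or Choi) implies that $f$ lifts to a map $\tilde{f} : \mathbb{S}^n \to \mathbb{S}^n$ satisfying $\pi' \circ \tilde{f} = f \circ \pi$, where $\pi$ and $\pi'$ are the covering projections. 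Because $f$ is an isometry and the covering maps are local isometries away from singular strata, $\tilde{f}$ is a local isometry of $\mathbb{S}^n$; by the rigidity of the round metric, $\tilde{f}$ extends to a global isometry, and hence $\tilde{f} \in O(n+1)$.

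It remains to show $\tilde{f} G \tilde{f}^{-1} = G'$. For any $g \in G$, the composition $\tilde{f} \circ g \circ \tilde{f}^{-1}$ is an isometry of $\mathbb{S}^n$ whose projection to $\mathbb{S}^n/G'$ equals $f \circ \mathrm{id} \circ f^{-1} = \mathrm{id}$, so it must be an element of the deck group $G'$. This defines a group homomorphism $G \to G'$, $g \mapsto \tilde{f} g \tilde{f}^{-1}$, whose inverse is given by conjugation by $\tilde{f}^{-1}$ (applying the same argument to $f^{-1}$). Hence conjugation by $\tilde{f}$ is a bijection $G \to G'$, so $G$ and $G'$ are conjugate in $O(n+1)$.

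The main obstacle is the lifting step in the orbifold setting: one needs to know that an isometry between two orbifolds with common simply connected Riemannian cover lifts to an isometry of the cover. On the regular locus this is the usual covering-space lift, but one must verify continuity (and then isometric extension) across the singular set, which in our case has codimension at least two and so causes no difficulty. This is the point at which the hypothesis $n \geq 2$ enters, and where one uses \emph{orbifold} rather than ordinary covering theory so that fixed points of $G$ or $G'$ are permitted.
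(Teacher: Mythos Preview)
The paper does not actually supply a proof of this lemma: it merely states that Ikeda proved the result for manifold spherical space forms and remarks that his argument does not use fixed-point freeness, so the statement holds for orbifold spherical space forms as well. Your proposal therefore goes well beyond what the paper does, by writing out an explicit argument.

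Your proof is correct and is essentially the standard one (and presumably close to Ikeda's): the ``if'' direction is the trivial observation that conjugation by $h\in O(n+1)$ descends to an isometry of quotients, and the ``only if'' direction lifts an isometry $f:\mathbb{S}^n/G\to\mathbb{S}^n/G'$ to the simply connected cover $\mathbb{S}^n$ and identifies the lift as an element of $O(n+1)$ conjugating $G$ to $G'$. You are right to flag the lifting step as the only place requiring care in the orbifold setting, and your handling of it (lift on the regular locus via ordinary covering theory, then extend across the codimension-$\geq 2$ singular set) is sound; alternatively one may invoke orbifold covering theory directly, as in Thurston's notes. Either way, this is exactly the point the paper glosses over when it asserts that Ikeda's manifold proof carries through.
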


\smallskip

Note that if we have a lens space $\mathbb{S}^{2n-1}/G = L(q : p_{1}, \ldots, p_n)$, with $G = <g>$, permuting the $p_i$'s doesn't change the underlying group G; similarly, if we multiply all the $p_i$'s by some number $\pm l$ where $gcd(l, q) = 1$, that simply means we have mapped the generator $g$ to the generator $g^l$, and so we still have the same group $G$. Also note that if two lens spaces $\mathbb{S}^{2n-1}/G = L(q : p_{1}, \ldots, p_n)$ and $\mathbb{S}^{2n-1}/G' = L(q : s_{1}, \ldots, s_n)$ are isometric, then by the above lemma $G$ and $G'$ must be conjugate. So, the lift of the isometry on $\mathbb{S}^{2n-1}$ maps a generator, $g$ of $G$ to a generator ${g'}^l$ of $G'$. This means that the eigenvalues of $g$ and ${g'}^l$ are the same, which means that each $p_i$ is equivalent to some $ls_j$ or $-ls_j$ $(\text{mod } q)$. These facts give us the following corollary for Lemma \ref{lemma31}

\begin{cor} \label{lensspaceisometric}
Let $L = L(q : p_{1}, \ldots, p_n)$ and $L' = L(q : s_{1}, \ldots, s_n)$ be
lens spaces. Then $L$ is isometric to $L'$ if and only if there is a number 
$l$ coprime with $q$ and there are numbers $e_{i} \in \{-1,1 \}$ such that $(p_1, \ldots, p_n)$
is a permutation of $(e_{1}ls_1, \ldots, e_{n}ls_n) \pmod{q}$. 
\end{cor}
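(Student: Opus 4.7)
The plan is to apply \lemref{lemma31} to convert the isometry question into a question about conjugacy of $G$ and $G'$ in $O(2n)$, and then to translate that conjugacy into the explicit parameter condition by tracking the eigenvalues of a generator.

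For the ``only if'' direction, I would start from an isometry, use \lemref{lemma31} to produce $h \in O(2n)$ with $hGh^{-1} = G'$, and observe that $hgh^{-1}$ must be a generator of $G'$. Writing $hgh^{-1} = (g')^l$, the fact that $hgh^{-1}$ has order $q$ forces $\gcd(l,q) = 1$. Since conjugate orthogonal matrices share eigenvalue multisets, I would then match the multiset $\{\gamma^{\pm p_i}\}_{i=1}^n$ against $\{\gamma^{\pm l s_i}\}_{i=1}^n$ pair-by-pair to extract a permutation $\sigma$ of $\{1,\ldots,n\}$ and signs $e_i \in \{-1,1\}$ with $p_i \equiv e_i l s_{\sigma(i)} \pmod q$, which is exactly the stated condition.

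For the ``if'' direction, given such $l$, $\sigma$, and $e_i$, I would construct $h \in O(2n)$ explicitly as a composition of three types of orthogonal conjugations. First, replacing $g'$ by $(g')^l$ leaves the cyclic group $G' = \langle g' \rangle$ unchanged because $\gcd(l,q) = 1$. Second, a permutation matrix in $O(2n)$ permutes the $2\times 2$ diagonal rotation blocks to realize $\sigma$. Third, for each index with $e_i = -1$, conjugation of the $i$-th block by $\mathrm{diag}(1,-1)$ sends $R(\theta)$ to $R(-\theta)$, absorbing the sign flip. Composing these yields $h \in O(2n)$ taking $(g')^l$ to $g$, hence $hG'h^{-1} = G$, and \lemref{lemma31} then returns the required isometry.

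Neither direction looks deep; the main obstacle is purely bookkeeping, namely keeping the three pieces of combinatorial data (the exponent $l$, the permutation $\sigma$, and the signs $e_i$) properly aligned with the block structure of the orthogonal conjugation. A minor subtlety worth checking is that when some $p_i \equiv 0$ or $q/2 \pmod q$ the eigenvalue pair $\gamma^{\pm p_i}$ degenerates, but any grouping of the multiset into conjugate pairs still produces valid data $\sigma$ and $e_i$, so the argument is unaffected.
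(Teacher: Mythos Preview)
Your proposal is correct and follows essentially the same route as the paper: the paper's proof (given in the paragraph immediately preceding the corollary) also invokes \lemref{lemma31} to reduce isometry to conjugacy in $O(2n)$, then for the ``only if'' direction observes that conjugation sends the generator $g$ to some $(g')^l$ and matches eigenvalues, while for the ``if'' direction it notes that permuting the $p_i$ and replacing $g$ by $g^l$ leave the group unchanged. Your version is more carefully fleshed out---in particular your explicit construction of $h$ via block permutations and the $\mathrm{diag}(1,-1)$ conjugations handling the individual signs $e_i$ makes the ``if'' direction cleaner than the paper's sketch---but the underlying argument is the same.
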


Assume we have a spherical space form $\mathbb{S}^m/G$ for any integer $m \geq 2$. For any $f \in C^{\infty} (\mathbb{S}^m/G)$, we define the Lapacian on the spherical space form as
$\widetilde{\Delta}(\pi^{*} f) = \pi^{*} (\Delta f)$. 
We now construct the spectral generating function associated with the Laplacian on $\mathbb{S}^{2n-1}/G$ analogous to the construction in the manifold case (see \cite{I1}, \cite{I2} and \cite{IY}). Let $\tilde{\Delta}$, $\Delta$ and $\Delta_0$ denote the Laplacians of $\mathbb{S}^{2n-1}$, $\mathbb{S}^{2n-1}/G$ and $\mathbb{R}^{2n}$, respectively. 
 
 \begin{definition} For any non-negative real number $\lambda$, we define the
\emph{eigenspaces} $\widetilde{E}_{\lambda}$ and $E_{\lambda}$ as follows: 
 \begin{align*}
 	\widetilde{E}_{\lambda} &= \big \{ f \in C^{\infty} (\mathbb{S}^{2n-1}) \big \arrowvert
\widetilde{\Delta} f = \lambda f \big \} ,\\ 
	E_{\lambda} &= \big \{ f \in C^{\infty} (\mathbb{S}^{2n-1}/G) \big \arrowvert \Delta f =
\lambda f \big \}.
\end{align*} 
\end{definition} 

\noindent The following lemma follows from the definitions of $\Delta$ and smooth function. 

\begin{lem}
Let $G$ be a finite subgroup of $O(n+1)$. 
\begin{enumerate}
\item[(i)] For any $f \in C^{\infty} (\mathbb{S}^{2n-1}/G)$, we have
$\widetilde{\Delta}(\pi^{*} f) = \pi^{*} (\Delta f)$. 
\item[(ii)] For  any $G$-invariant function $F$ on $\mathbb{S}^{2n-1}$, there exists a
unique function $f \in C^{\infty} (\mathbb{S}^n/G)$ such that $F = \pi^{*} f$. 
\end{enumerate}
\end{lem}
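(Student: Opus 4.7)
The plan is to recognise that both parts are essentially unpackings of the definitions of the induced Riemannian metric, the orbifold Laplacian, and the orbifold smooth structure on $\mathbb{S}^{2n-1}/G$; the verification then splits naturally between the regular and singular strata of the $G$-action.

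For part (i), I would first restrict attention to the regular locus $U \subset \mathbb{S}^{2n-1}$ on which $G$ acts freely. On this open, $G$-invariant, dense subset, $\pi$ is a local isometry from $U$ to $\pi(U)$, so for any smooth $f$ defined near $\pi(x)$ the identity $\widetilde{\Delta}(\pi^{*}f)(x) = (\Delta f)(\pi(x)) = \pi^{*}(\Delta f)(x)$ is the standard fact that the Laplace–Beltrami operator is intrinsic and commutes with local isometries. To extend the identity to the singular set $\mathbb{S}^{2n-1} \setminus U$, I would argue by continuity: both $\widetilde{\Delta}(\pi^{*}f)$ and $\pi^{*}(\Delta f)$ are continuous functions on $\mathbb{S}^{2n-1}$ that agree on a dense open subset, hence agree everywhere. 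Alternatively, one may observe that the orbifold Laplacian on $\mathbb{S}^{2n-1}/G$ is \emph{defined}, on each local uniformising chart, to be the operator whose lift to the cover is the manifold Laplacian, so the identity is built into the definition.

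For part (ii), I would invoke the defining property of smooth functions on an orbifold: a function $f : \mathbb{S}^{2n-1}/G \to \mathbb{R}$ is declared smooth exactly when, for every orbifold chart of the form $\cc$, the pullback $\pi^{*}f$ is a smooth $\Gamma$-invariant function on the uniformising cover $\widetilde{U}$. Given a $G$-invariant smooth $F$ on $\mathbb{S}^{2n-1}$, I would set $f([x]) := F(x)$; $G$-invariance makes this well defined, and surjectivity of $\pi$ makes it unique. Smoothness in the orbifold sense then follows chart by chart: on any local uniformiser, the isotropy group $\Gamma$ is a subgroup of $G$, so the restriction of $F$ to $\widetilde{U}$ is automatically smooth and $\Gamma$-invariant, and descends to the required local representative of $f$.

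The only genuinely non-routine step is being careful about what happens above the singular locus, since $\pi$ is not a local diffeomorphism there and one might worry either that the Laplacian identity breaks, or that the descended function $f$ fails to be smooth across a singular stratum. Both concerns dissolve once one adopts the standard orbifold conventions of Stanhope \cite{S1} and Dryden–Gordon–Greenwald–Webb \cite{DGGW}: the orbifold Laplacian is defined through $G$-invariant lifts, and the smooth structure on $\mathbb{S}^{2n-1}/G$ is by definition the one in which $G$-invariant smooth functions on the cover and smooth functions on the quotient correspond. Thus the main work of the proof is really bookkeeping rather than analysis, and I expect no substantive obstacle beyond citing these conventions explicitly.
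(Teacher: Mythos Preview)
Your proposal is correct and matches the paper's approach: the paper itself gives no proof beyond the single remark that the lemma ``follows from the definitions of $\Delta$ and smooth function,'' which is exactly the point you are unpacking. Your treatment is considerably more detailed than what the paper supplies, but the underlying idea---that both statements are built into the orbifold definitions of the Laplacian and of smooth functions on a quotient---is the same.
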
 

\begin{cor} \label{ch3dimcorollary}
Let $\big (\widetilde{E}_{\lambda} \big )_{G}$ be the space of all $G$-invariant
functions of $\widetilde{E}_{\lambda}$. Then $\dim (E_{\lambda}) = 
\dim (\widetilde{E})_{G}$. 
\end{cor}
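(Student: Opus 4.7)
The plan is to exhibit an explicit isomorphism of vector spaces $\pi^* : E_\lambda \to (\widetilde{E}_\lambda)_G$ given by pullback along the covering projection $\pi$, and then use the preceding lemma to check bijectivity.

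First I would verify that $\pi^*$ maps into $(\widetilde{E}_\lambda)_G$. If $f \in E_\lambda$, then by part (i) of the lemma, $\widetilde{\Delta}(\pi^* f) = \pi^*(\Delta f) = \pi^*(\lambda f) = \lambda \pi^* f$, so $\pi^* f \in \widetilde{E}_\lambda$. Since $\pi \circ g = \pi$ for every $g \in G$, the pullback $\pi^* f$ is automatically $G$-invariant, hence lies in $(\widetilde{E}_\lambda)_G$. Linearity of $\pi^*$ is immediate.

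Next I would show injectivity. Since $\pi : \mathbb{S}^{2n-1} \to \mathbb{S}^{2n-1}/G$ is surjective, $\pi^* f = \pi^* f'$ forces $f = f'$ pointwise on the quotient, so $\pi^*$ is injective. For surjectivity, take any $F \in (\widetilde{E}_\lambda)_G$. By part (ii) of the lemma there is a unique $f \in C^\infty(\mathbb{S}^{2n-1}/G)$ with $F = \pi^* f$. It remains to see $f \in E_\lambda$; applying part (i) again,
\[
\pi^*(\Delta f) = \widetilde{\Delta}(\pi^* f) = \widetilde{\Delta} F = \lambda F = \lambda \pi^* f = \pi^*(\lambda f),
\]
and then injectivity of $\pi^*$ forces $\Delta f = \lambda f$, i.e.\ $f \in E_\lambda$. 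This gives a linear bijection and hence the equality of dimensions.

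There is essentially no obstacle here beyond being careful with the lemma's hypotheses: every step is a direct consequence of the two parts of the preceding lemma together with the surjectivity of $\pi$. The only mild subtlety worth flagging is ensuring that part (ii) of the lemma is applied to the $G$-invariant eigenfunction $F$ (so that the descended function $f$ is genuinely smooth on the orbifold quotient), which is exactly the content of that part of the lemma in the orbifold setting.
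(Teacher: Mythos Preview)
Your proof is correct and is precisely the natural argument the paper leaves implicit: the paper states this result as a corollary of the preceding lemma without writing out a proof, and your construction of the explicit isomorphism $\pi^* : E_\lambda \to (\widetilde{E}_\lambda)_G$ using parts (i) and (ii) of that lemma is exactly how one fills in the details.
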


Let $\Delta_0$ be the Laplacian on $\mathbf{R}^{2n}$ with respect to the flat
K\"ahler metric. Set $r^2 = \sum_{i=1}^{2n} x_{i}^2$, where $(x_1, x_2, \ldots, 
x_{2n})$ is the standard coordinate system on $\mathbf{R}^{2n}$. For $k \geq 0$, 
let $P^k$ denote the space of complex valued homogeneous polynomials of degree $k$ 
on  $\mathbb{R}^{2n}$. Let $H^k$ be the subspace of $P^k$ consisting of harmonic 
polynomials on $\mathbb{R}^{2n}$, \[ H^k = \big \{ f \in P^k \big \arrowvert \Delta_{0}f = 0 \big \} .\]
Each orthogonal transformation of $\mathbb{R}^{2n}$ canonically induces a linear isomorphism of $P^k$.  

\begin{prop} \label{ch3inducedlinmap}
The space $H^k$ is $O(2n)$-invariant, and $P^k$ has the direct sum
decomposition: $P^k = H^k \oplus r^{2} P^{k-2}$. 
\end{prop}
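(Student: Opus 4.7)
The plan is to handle the two assertions separately. For $O(2n)$-invariance of $H^k$, the key point is that $\Delta_0 = \sum_{i=1}^{2n} \partial^2/\partial x_i^2$ is the Laplacian of the standard Euclidean metric on $\mathbf{R}^{2n}$, and is therefore invariant under the action of $O(2n)$ on functions. Hence, if $f \in H^k$ and $A \in O(2n)$, then $\Delta_0(f \circ A) = (\Delta_0 f) \circ A = 0$, while $f \circ A$ remains a homogeneous polynomial of degree $k$. Thus $f \circ A \in H^k$.

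For the direct sum decomposition, I would introduce the Fischer inner product on $P^k$, defined by
\[
\langle f, g \rangle \;=\; \bigl(f(\partial)\, \overline{g}\bigr)(0),
\]
where $f(\partial)$ denotes the constant-coefficient differential operator obtained by replacing each $x_i$ in $f$ with $\partial/\partial x_i$. Equivalently, on the monomial basis $x^\alpha$, $\langle x^\alpha, x^\beta \rangle = \alpha!\,\delta_{\alpha\beta}$, so this is a positive definite Hermitian inner product.

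The crucial observation is that multiplication by $r^2$ and the Laplacian $\Delta_0$ are adjoints with respect to this pairing: for any $h \in P^{k-2}$ and $f \in P^k$,
\[
\langle r^2 h,\, f\rangle_{P^k} \;=\; \langle h,\, \Delta_0 f\rangle_{P^{k-2}}.
\]
This is verified by a direct computation on monomials, using that the formal adjoint of $\partial/\partial x_i$ under the Fischer pairing is multiplication by $x_i$. Granted this adjoint relationship, the orthogonal complement of $r^2 P^{k-2}$ inside $P^k$ is precisely the kernel of $\Delta_0$ on $P^k$, which by definition is $H^k$. Positive definiteness of the inner product then yields the direct sum $P^k = H^k \oplus r^2 P^{k-2}$, both the spanning and the trivial intersection coming for free.

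The main obstacle, such as it is, lies in selecting a pairing on polynomial spaces for which the adjoint identity $(r^2)^* = \Delta_0$ holds cleanly; once the Fischer inner product is in hand, the rest is formal. An alternative route would be a dimension count combined with a separate verification that $H^k \cap r^2 P^{k-2} = \{0\}$, but the adjoint method has the advantage of giving both statements simultaneously and exhibiting the decomposition as orthogonal, which will be useful later when analyzing eigenspaces on $\mathbb{S}^{2n-1}$.
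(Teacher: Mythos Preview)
Your proof is correct. The paper does not actually prove this proposition; immediately after stating it and the companion Proposition~\ref{ch3eigenspaceprop}, the authors write ``For proofs of these propositions, see \cite{BGM}'' and move on. So there is no in-paper argument to compare against. Your route via the Fischer inner product, exploiting the adjoint relation $(r^2)^* = \Delta_0$, is one of the standard clean proofs of this decomposition and would be entirely appropriate here; it has the added benefit, as you note, of exhibiting the splitting as orthogonal.
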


\noindent The injection map $i: \mathbb{S}^{2n-1} \rightarrow \mathbb{R}^{2n}$ induces a linear map
$i^{*}: C^{\infty}(\mathbb{R}^{2n}) \rightarrow C^{\infty}(\mathbb{S}^{2n-1})$. 
We denote $i^{*}(H^k)$ by $\mathcal{H}^k$.

\begin{prop} \label{ch3eigenspaceprop}
$\mathcal{H}^k$ is an eigenspace of $\widetilde{\Delta}$ on $\mathbb{S}^{2n-1}$ with
eigenvalue $k(k+2n-2)$ and $\sum_{k=0}^{\infty} \mathcal{H}^{k}$ 
is dense in $C^{\infty}(\mathbb{S}^{2n-1})$ in the uniform convergence topology.
Moreover, $\mathcal{H}^k$ is isomorphic to $H^k$. That is, 
$i^{*}\! : H^k \stackrel{\simeq}{\longrightarrow} \mathcal{H}^k$. 
\end{prop}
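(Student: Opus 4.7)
The plan is to handle the three assertions in turn: the eigenvalue computation, the isomorphism $i^{\ast}:H^{k}\to\mathcal{H}^{k}$, and the density.

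For the eigenvalue computation, I would pass to polar coordinates on $\mathbb{R}^{2n}$, writing the Euclidean Laplacian as
\[
\Delta_{0}=-\frac{\partial^{2}}{\partial r^{2}}-\frac{2n-1}{r}\frac{\partial}{\partial r}+\frac{1}{r^{2}}\widetilde{\Delta},
\]
where $\widetilde{\Delta}$ is the induced Laplace--Beltrami operator on $\mathbb{S}^{2n-1}$. Given $f\in H^{k}$, homogeneity lets me write $f(r\omega)=r^{k}\,(i^{\ast}f)(\omega)$ for $\omega\in\mathbb{S}^{2n-1}$. Plugging into $\Delta_{0}f=0$ and dividing by $r^{k-2}$ yields $\widetilde{\Delta}(i^{\ast}f)=\bigl(k(k-1)+(2n-1)k\bigr)(i^{\ast}f)=k(k+2n-2)(i^{\ast}f)$, which shows $\mathcal{H}^{k}\subseteq E_{k(k+2n-2)}$.

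For the isomorphism, surjectivity is built into the definition $\mathcal{H}^{k}:=i^{\ast}(H^{k})$. For injectivity, suppose $f\in H^{k}$ with $i^{\ast}f\equiv 0$. Since $f$ is homogeneous of degree $k$, $f(x)=\|x\|^{k}(i^{\ast}f)(x/\|x\|)=0$ for all $x\neq 0$ and hence $f\equiv 0$ by continuity. Thus $i^{\ast}:H^{k}\to\mathcal{H}^{k}$ is a linear bijection.

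For density, I would iterate the decomposition $P^{k}=H^{k}\oplus r^{2}P^{k-2}$ from \pref{ch3inducedlinmap}: restricting to $\mathbb{S}^{2n-1}$, where $r^{2}=1$, gives $i^{\ast}(P^{k})=\mathcal{H}^{k}+\mathcal{H}^{k-2}+\cdots$, so $\bigoplus_{k\geq 0}\mathcal{H}^{k}$ coincides with the space of restrictions to $\mathbb{S}^{2n-1}$ of all polynomials on $\mathbb{R}^{2n}$. This algebra separates points of $\mathbb{S}^{2n-1}$ and contains the constants, so by the Stone--Weierstrass theorem it is uniformly dense in $C(\mathbb{S}^{2n-1})$. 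The main technical nuisance I anticipate is being careful about the functional-analytic statement: the eigenspaces $\mathcal{H}^{k}$ are pairwise orthogonal in $L^{2}(\mathbb{S}^{2n-1})$ because they correspond to distinct eigenvalues of the self-adjoint operator $\widetilde{\Delta}$, so the sum is automatically direct, and the uniform density above, combined with standard elliptic regularity/Sobolev embedding, promotes the density from $C^{0}$ to the $C^{\infty}$ (uniform convergence of all derivatives) topology.
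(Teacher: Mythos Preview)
The paper does not prove this proposition; immediately after stating it, the authors write ``For proofs of these propositions, see \cite{BGM}'' (Berger--Gauduchon--Mazet). Your argument is correct and is essentially the standard one found in that reference: the polar decomposition of $\Delta_{0}$ yields the eigenvalue, homogeneity gives injectivity of $i^{\ast}$, and iterating $P^{k}=H^{k}\oplus r^{2}P^{k-2}$ together with Stone--Weierstrass gives density.

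One small remark: the phrase ``uniform convergence topology'' in the statement is most naturally read as the $C^{0}$ sup-norm, so Stone--Weierstrass already completes the density claim as written; the elliptic regularity/Sobolev step you sketch at the end is not needed here, though it would be if one wanted density in the full Fr\'echet $C^{\infty}$ topology.
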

\noindent For proofs of these propositions, see \cite{BGM}. 

Now Corollary \ref{ch3dimcorollary} and \propref{ch3eigenspaceprop} imply that if we denote by 
$\mathcal{H}_{G}^k$ be the space of all $G$-invariant functions in
$\mathcal{H}^k$, then
\[ \dim E_{k(k+2n-2)} = \dim \mathcal{H}_{G}^k .\]
Moreover, for any integer $k$ such that $\dim \mathcal{H}_{G}^k \neq 0$,
$\bar{\lambda}_k = k(k+2n-2)$ is an eigenvalue of $\Delta$ on $\mathbb{S}^{2n-1}/G$ with multiplicity equal to 
$\dim \mathcal{H}_{G}^k$, and no other eigenvalues appear in the spectrum of
$\Delta$. 

\begin{definition}
Let $O$ be a closed compact Riemannian orbifold with the Laplace spectrum,  $0 \leq
\bar{\lambda}_{1} < \bar{\lambda}_{2} < \bar{\lambda}_{3} \ldots \uparrow \infty$. For each $\bar{\lambda}_k$, let the eigenspace be  
 \begin{align*}
 		E_{\bar{\lambda}_k} &= \big \{ f \in C^{\infty} (O) \big \arrowvert \Delta f =
\bar{\lambda}_k f \big \}.
\end{align*} 
We define the spectrum generating function associated to the spectrum of the Laplacian on $O$ as 
\[F_{O}(z) = \sum_{k =0}^{\infty} \big( \dim {E_{\bar{\lambda}_k}} \big) z^k .\] 
\end{definition}

\noindent In terms of spherical space forms, the definition becomes

\begin{definition}
The \emph{generating function} $F_{G}(z)$ associated to the
spectrum of the Laplacian on $\mathbb{S}^n/G$ is the generating function 
associated to the infinite sequence $\big \{ \dim \mathcal{H}_{G}^{k} \big
\}_{k=0}^{\infty}$ , i.e., 
\[ F_{G}(z) = \sum_{k =0}^{\infty} \big( \dim
\mathcal{H}_{G}^{k} \big) z^k .\] 
\end{definition}

By \corref{ch3dimcorollary}, Proposition \ref{ch3eigenspaceprop} and subsequent discussion, we know that the generating function determines the spectrum of $\mathbb{S}^n/G$. This fact gives us the following proposition:

\begin{prop} \label{prop329}
Let $\mathbb{S}^n/G$ and $\mathbb{S}^n/G'$ be two spherical space forms. Let
$F_{G}(z)$ and $F_{G'}(z)$ be their respective spectrum generating functions. Then $\mathbb{S}^n/G$ is
isospectral to $\mathbb{S}^n/G'$ if and only if $F_{G}(z) = F_{G'}(z)$. 
\end{prop}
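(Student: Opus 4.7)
The plan is to observe that both the Laplace spectrum of $\mathbb{S}^n/G$ (with multiplicities) and the generating function $F_G(z)$ encode the very same bookkeeping data, namely the sequence $\{\dim \mathcal{H}_G^k\}_{k \geq 0}$ indexed by non-negative integers. The proof is then essentially a matter of unwinding definitions.

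First I would recall that by \corref{ch3dimcorollary} together with \propref{ch3eigenspaceprop}, the full list of Laplace eigenvalues on $\mathbb{S}^n/G$ consists of those $\bar{\lambda}_k = k(k+n-1)$ with $\dim \mathcal{H}_G^k \neq 0$, each appearing with multiplicity $\dim \mathcal{H}_G^k$; density of $\sum_k \mathcal{H}^k$ in $C^{\infty}(\mathbb{S}^n)$ ensures no other eigenvalues occur. The same description applies verbatim with $G'$ in place of $G$. Hence by construction the coefficient of $z^k$ in $F_G(z)$ records the multiplicity of the candidate eigenvalue $\bar{\lambda}_k$ in the spectrum, with the value $0$ meaning $\bar{\lambda}_k$ is simply absent.

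For the forward implication, I equate coefficients of $z^k$ in $F_G(z) = F_{G'}(z)$ to conclude $\dim \mathcal{H}_G^k = \dim \mathcal{H}_{G'}^k$ for every $k$; this immediately gives the same multiplicities for each $\bar{\lambda}_k$ in the two spectra, hence isospectrality. For the converse, the only point needing attention is that the map $k \mapsto \bar{\lambda}_k = k(k+n-1)$ is strictly increasing on $\mathbb{Z}_{\geq 0}$ (its discrete difference is $2k+n$, positive for $k \geq 0$, $n \geq 2$), so distinct values of $k$ produce distinct candidate eigenvalues. Equality of the two spectra then pins down the multiplicity of each $\bar{\lambda}_k$ individually, forcing $\dim \mathcal{H}_G^k = \dim \mathcal{H}_{G'}^k$ for every $k$, and hence $F_G(z) = F_{G'}(z)$.

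The closest thing to an obstacle is just this injectivity of the eigenvalue assignment: without it, a single eigenvalue could in principle pick up contributions from several $\mathcal{H}_G^k$ and $\mathcal{H}_{G'}^k$ with different $k$, and individual generating-function coefficients could not be recovered from the spectrum. Once that observation is recorded, the proposition reduces to a reformulation, in the language of generating functions, of the eigenvalue/multiplicity correspondence already established in \corref{ch3dimcorollary} and \propref{ch3eigenspaceprop}.
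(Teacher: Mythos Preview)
Your proof is correct and follows the same approach as the paper, which simply asserts the proposition as an immediate consequence of \corref{ch3dimcorollary}, \propref{ch3eigenspaceprop}, and the discussion establishing $\dim E_{\bar{\lambda}_k} = \dim \mathcal{H}_G^k$. You are in fact more careful than the paper in making explicit the injectivity of $k \mapsto k(k+n-1)$, which is what justifies reading off individual coefficients from the spectrum in the converse direction.
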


\noindent Our first goal is to find an alternative expression for $F_{G}(z)$ that will allow us to compare $F_{G}(z)$ and $F_{G'}(z)$.  

If $G$ is a finite subgroup of $O(2n)$ with orientation preserving action on $S^{2n-1}$ then $G$ is a subgroup of $SO(2n)$. In the following we will consider orientation-preserving group actions.

The following theorem, proved for manifold spherical space forms in \cite{I1} and \cite{I2}, holds true for the orbifold spherical space forms as well.

\begin{thm} \label{theorem3209}
Let $G$ be a finite subgroup of $SO(2n)$, and let $\mathbb{S}^{2n-1}/G$  be a spherical space form with spectrum generating function $F_G(z)$. Then, on the domain 
$\big \{ z \in \mathbb{C} \, \big \arrowvert  \, |z| < 1 \big \}$, $F_G(z)$ converges to the function
\[ F_{G}(z) = \frac{1}{|G|} \, \sum_{g \in G}\,
\frac{1-z^2}{\text{det}(I_{2n} - gz)}.\]
where $|G|$ denotes the order of $G$ and $I_{2n}$ is the $2n \times 2n$ identity matrix.
\end{thm}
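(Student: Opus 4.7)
The plan is to reduce $F_G(z)$ to a Hilbert-series computation via Molien's character formula, using the harmonic decomposition of polynomials from Proposition~\ref{ch3inducedlinmap}.

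First I would pass from $\mathcal{H}^k_G$ to $H^k_G$. Since the inclusion $i\colon \mathbb{S}^{2n-1} \hookrightarrow \mathbb{R}^{2n}$ is $G$-equivariant, the isomorphism $i^*\colon H^k \to \mathcal{H}^k$ of Proposition~\ref{ch3eigenspaceprop} intertwines the $G$-actions on the two sides, so $\dim \mathcal{H}^k_G = \dim H^k_G$. Hence
\[
F_G(z) \;=\; \sum_{k=0}^{\infty} (\dim H^k_G)\, z^k.
\]

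Next I would reduce from harmonic polynomials to all polynomials. Because $r^2$ is $O(2n)$-invariant, multiplication by $r^2$ is $G$-equivariant, so the splitting $P^k = H^k \oplus r^2 P^{k-2}$ of Proposition~\ref{ch3inducedlinmap} restricts to $P^k_G = H^k_G \oplus r^2 P^{k-2}_G$. This gives $\dim H^k_G = \dim P^k_G - \dim P^{k-2}_G$ (with the convention $P^{-1}=P^{-2}=0$), and telescoping the series yields
\[
F_G(z) \;=\; (1-z^2)\sum_{k=0}^{\infty} (\dim P^k_G)\, z^k.
\]

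The main step is then to establish the Molien identity
\[
\sum_{k=0}^{\infty} (\dim P^k_G)\, z^k \;=\; \frac{1}{|G|}\sum_{g\in G} \frac{1}{\det(I_{2n}-gz)}.
\]
I would use the standard averaging projector $\frac{1}{|G|}\sum_{g\in G} g$ acting on $P^k$ to write $\dim P^k_G = \frac{1}{|G|}\sum_{g\in G}\operatorname{tr}(g|_{P^k})$. For each $g$, diagonalize its complex action as $\operatorname{diag}(\alpha_1,\ldots,\alpha_{2n})$ (possible since orthogonal matrices are unitary, hence diagonalizable over $\mathbb{C}$). In the associated eigencoordinates, the degree-$k$ monomials form an eigenbasis for the induced action of $g$ on $P^k$, with eigenvalues $\alpha_1^{i_1}\cdots\alpha_{2n}^{i_{2n}}$ ranging over multi-indices with $\sum_j i_j = k$, so
\[
\sum_{k=0}^{\infty} \operatorname{tr}(g|_{P^k})\, z^k \;=\; \prod_{j=1}^{2n}\frac{1}{1-\alpha_j z} \;=\; \frac{1}{\det(I_{2n}-gz)}.
\]
Orthogonality of $g$ forces $|\alpha_j|=1$, so each geometric factor converges absolutely on $\{|z|<1\}$; exchanging the finite sum over $G$ with the power series in $z$ then gives the desired formula, and combining the three displayed identities yields the theorem.

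The expected obstacle is organizational rather than deep: I must check that the harmonic splitting is genuinely $G$-stable (immediate since $G \subset O(2n)$ preserves both $r^2$ and the Euclidean Laplacian $\Delta_0$), handle the low-$k$ boundary of the telescoping cleanly, and be careful with sign conventions so that the eigenvalues of $g$ on $P^1$ match those of $g$ acting on $\mathbb{R}^{2n}\otimes\mathbb{C}$. At worst this replaces $g$ by $g^{-1}$ inside the sum, but since $G$ is closed under inversion the right-hand side is unchanged.
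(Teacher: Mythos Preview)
Your argument is correct and is precisely the classical Molien-series computation that underlies this result; the paper does not give its own proof of Theorem~\ref{theorem3209} but simply cites Ikeda \cite{I1,I2}, whose proof proceeds exactly along the lines you outline (character averaging on $H^k$, then the geometric-series identity for $\sum_k \chi_k(g)z^k$ via the decomposition $P^k = H^k \oplus r^2 P^{k-2}$). Your version is slightly more explicit about the $G$-equivariance of the harmonic splitting and the convergence on $|z|<1$, but there is no substantive difference in strategy.
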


\noindent We denote the generating function for a lens space $L = \lqp$ by $F_q(z: p_1, ..., p_n)$.

\begin{cor} \label{theorem3210}
Let $\lqp$ be a lens space and $F_{q}(z: p_1, \ldots, p_n)$ the generating
function associated to the spectrum of $\lqp$. Then, on the domain 
$\big \{ z \in \mathbb{C} \, \big \arrowvert  \, |z| < 1 \big \}$, 
\[ F_{q}(z: p_1, \ldots, p_n) = \frac{1}{q} \, \sum_{l=1}^{q} \,
\frac{1-z^2}{\prod_{i=1}^{n} (z - \gamma^{p_{i} l} )(z - \gamma^{-p_{i} l})} ,\]
where $\gamma$ is a primitive $q$-th root of unity.
\end{cor}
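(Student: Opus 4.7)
The plan is to derive the corollary as a direct specialization of \thmref{theorem3209}. Since $G = \{g^l\}_{l=1}^q$ is cyclic of order $q$, the prefactor $1/|G|$ becomes $1/q$ and the sum over group elements becomes a sum over $l = 1, \ldots, q$. The entire content of the corollary is therefore the computation of $\det(I_{2n} - g^l z)$ for each $l$.

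To carry out this computation, I would first observe that $g^l$ is block-diagonal with blocks $R(l p_i / q)$, since raising the block-diagonal matrix $g$ to the power $l$ multiplies each rotation angle by $l$. The determinant of $I_{2n} - g^l z$ then factors as the product, over $i = 1, \ldots, n$, of the $2 \times 2$ determinants $\det(I_2 - R(lp_i/q) z)$. Each such $2 \times 2$ block has eigenvalues $\gamma^{l p_i}$ and $\gamma^{-l p_i}$, where $\gamma = e^{2\pi i / q}$ is a primitive $q$-th root of unity, so
\[
\det\bigl(I_2 - R(lp_i/q)\, z\bigr) = (1 - \gamma^{l p_i} z)(1 - \gamma^{-l p_i} z).
\]

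The final step is a cosmetic rewriting to match the form stated in the corollary. Since $\gamma^{l p_i} \cdot \gamma^{-l p_i} = 1$, one has the algebraic identity
\[
(1 - \gamma^{l p_i} z)(1 - \gamma^{-l p_i} z) = (z - \gamma^{l p_i})(z - \gamma^{-l p_i}),
\]
so that
\[
\det(I_{2n} - g^l z) \;=\; \prod_{i=1}^n (z - \gamma^{p_i l})(z - \gamma^{-p_i l}).
\]
Substituting into the formula of \thmref{theorem3209} yields the stated expression for $F_q(z : p_1, \ldots, p_n)$. There is no real obstacle here: the only thing to be careful about is the sign/normalization in rewriting the factors, and the verification that the eigenvalues of $g^l$ are indeed the claimed roots of unity, both of which are immediate from the block structure of $g$ fixed in~(\ref{eqmatrix}).
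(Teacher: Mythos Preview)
Your proposal is correct and follows essentially the same route as the paper: specialize \thmref{theorem3209} to the cyclic group, compute $\det(I_{2n}-g^lz)$ via the eigenvalues $\gamma^{\pm p_i l}$ of the rotation blocks, and then invoke the identity $(1-\gamma^{p_il}z)(1-\gamma^{-p_il}z)=(z-\gamma^{p_il})(z-\gamma^{-p_il})$ to rewrite the denominator. The paper justifies that last identity by multiplying through by $(-\gamma^{-p_il})(-\gamma^{p_il})=1$, which is exactly what you observed.
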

\begin{proof}
In the notation of the Theorem \ref{theorem3209}, we get
\begin{align}\label{eq4.5}
\dim \mathcal{H}_{G}^k = \frac{1}{|G|} \sum_{g \in G}  \chi_{k}(g) = \frac{1}{q}
\sum_{l=1}^{q} \chi_{k} (g^l).
\end{align}

So 
\begin{align*}
F_{q}(z: p_1, \ldots, p_n) &= \frac{(1-z^2)}{|G|} \sum_{g \in G} \, 
\frac{1}{\prod_{i=1}^{n} (1 - \gamma^{p_i}z) (1 - \gamma^{-p_i}z)}\\
&= \frac{(1-z^2)}{q} \sum_{l=1}^{q} \, 
\frac{1}{\prod_{i=1}^{n} (z - \gamma^{p_i l}) (z - \gamma^{-p_i l})},
\end{align*}
since multiplying through by $1 = (-\gamma^{-p_i l})(-\gamma^{p_i l})$ 
gives \\ $(1 - \gamma^{p_i l} z) (1 - \gamma^{-p_i l} z) = (z - \gamma^{-p_i l}) (z
- \gamma^{p_i l})$.
\end{proof}
 
\textbf{Remark}: By the Theorem \ref{theorem3209} and unique analytic continuation, we can consider the generating function to be a meromorphic function on the whole complex plane $\mathbb{C}$ with poles on the unit circle $\mathbb{S}^1 = \{z\in \mathbb{C} \mid |z| = 1\}$. 

From this remark we have,

\begin{cor}
Let $\mathbb{S}^{2n-1}/G$ and $\mathbb{S}^{2n-1}/G'$ be two spherical space forms. If there is a one to one mapping $\phi$ of $G$ onto $G'$ such that the set $E(g)$ = the set $E(\phi(g)), \forall g \in G$, then $\mathbb{S}^{2n-1}/G$ is isospectral to $\mathbb{S}^{2n-1}/G'.$
\end{cor}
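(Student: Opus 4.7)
The plan is to reduce the isospectrality claim to an equality of spectrum generating functions via Proposition \ref{prop329}, and then to verify that equality term-by-term using Theorem \ref{theorem3209} and the hypothesized bijection $\phi$.

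First, I would recall from Theorem \ref{theorem3209} that
\[
F_G(z) = \frac{1}{|G|}\sum_{g\in G}\frac{1-z^2}{\det(I_{2n}-gz)},
\qquad
F_{G'}(z) = \frac{1}{|G'|}\sum_{g'\in G'}\frac{1-z^2}{\det(I_{2n}-g'z)},
\]
valid on $\{|z|<1\}$. The key observation is then that for any $g\in SO(2n)$,
\[
\det(I_{2n}-gz) \;=\; \prod_{\lambda\in E(g)}(1-\lambda z),
\]
where the product is taken over the eigenvalues of $g$ counted with multiplicity. Thus this determinant depends only on the multiset $E(g)$. Reading the hypothesis $E(g)=E(\phi(g))$ as equality of these multisets (the natural convention, since the eigenvalues of elements of $SO(2n)$ come in complex-conjugate pairs and the characteristic polynomial is determined by the eigenvalue multiset), it follows immediately that $\det(I_{2n}-gz)=\det(I_{2n}-\phi(g)z)$ for every $g\in G$.

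Because $\phi\colon G\to G'$ is a bijection, we have $|G|=|G'|$, and reindexing the sum defining $F_{G'}(z)$ via $g'=\phi(g)$ yields $F_G(z)=F_{G'}(z)$ on $\{|z|<1\}$. Proposition \ref{prop329} then gives the isospectrality of $\mathbb{S}^{2n-1}/G$ and $\mathbb{S}^{2n-1}/G'$.

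There is really no substantial obstacle: the argument is immediate once Theorem \ref{theorem3209} is in hand, and the only point requiring care is the interpretation of $E(g)$ as a multiset rather than a bare set, which is forced by the need to match the determinantal factors appearing in the generating function.
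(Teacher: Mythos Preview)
Your proof is correct and follows essentially the same approach as the paper: the paper's one-line proof simply records the identity $\prod_{\gamma\in E(g)}(1-\gamma z)=\prod_{\gamma\in E(g)}(z-\gamma)=\det(I_{2n}-gz)$, from which the equality of generating functions (and hence isospectrality via Proposition~\ref{prop329}) is immediate. Your version spells out the reindexing step and the multiset interpretation of $E(g)$ explicitly, but the underlying argument is identical.
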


\begin{proof}
The proof follows from the fact that
\[\prod_{\gamma \in E(g)} (1 - \gamma z) 
= \prod_{\gamma \in E(g)} (z - \gamma) 
= \text{det} (I_{2n} - gz).\]
\end{proof}

\begin{cor}\label{corG}
Let $\mathbb{S}^{2n-1}/G$ and $\mathbb{S}^{2n-1}/G'$ be two isospectral spherical space forms. Then $|G| = |G'|$. 
\end{cor}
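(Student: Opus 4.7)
The plan is to extract $|G|$ from the leading singularity of the generating function $F_G(z)$ at $z=1$. By \propref{prop329}, isospectrality of $\mathbb{S}^{2n-1}/G$ and $\mathbb{S}^{2n-1}/G'$ is equivalent to $F_G(z) = F_{G'}(z)$, and by the Remark following \corref{theorem3210} both sides extend to meromorphic functions on $\mathbb C$ whose only singularities lie on the unit circle. So it suffices to show that $|G|$ is determined by the local behaviour of $F_G$ at $z = 1$.

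First I would analyze each summand in
\[ F_{G}(z) = \frac{1}{|G|} \, \sum_{g \in G}\, \frac{1-z^2}{\det(I_{2n} - gz)}.\]
Writing $\det(I_{2n} - gz) = \prod_{i=1}^{2n} (1-\lambda_i(g) z)$ where the $\lambda_i(g)$ are the eigenvalues of $g$, the order of the pole of the $g$-th summand at $z=1$ is $m_1(g) - 1$, where $m_1(g)$ denotes the multiplicity of $1$ as an eigenvalue of $g$ (the $-1$ comes from the simple zero of $1-z^2$ at $z=1$). For $g = I$, $m_1(g) = 2n$, yielding a pole of order $2n - 1$ and a local expression $\frac{1+z}{(1-z)^{2n-1}}$.

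Next I would argue that for $g \ne I$ in $SO(2n)$ one has $m_1(g) \le 2n-2$, so the $g$-th summand has a pole of order at most $2n-3$ at $z=1$. This uses the real normal form of orthogonal matrices: $g$ is conjugate in $SO(2n)$ to a block-diagonal matrix of $2\times 2$ rotation blocks $R(\theta_j)$, and each block with $\theta_j = 0$ contributes eigenvalue $1$ with multiplicity $2$. Hence $m_1(g)$ is even, and if $g \ne I$ then at least one block is non-trivial, so $m_1(g) \le 2n-2$. Consequently the identity element is the unique contributor to the pole of $F_G(z)$ of top order $2n-1$.

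Finally I would compute the residue-type limit
\[ \lim_{z \to 1} (1-z)^{2n-1} F_G(z) = \frac{1}{|G|} \lim_{z \to 1}(1+z) = \frac{2}{|G|}, \]
all other summands vanishing in the limit. The same computation applied to $F_{G'}$ gives $\frac{2}{|G'|}$, and since $F_G = F_{G'}$ one concludes $|G| = |G'|$. The only mildly delicate point — and the one worth flagging explicitly in the write-up — is the bound $m_1(g) \le 2n-2$ for non-identity elements of $SO(2n)$; everything else is a direct reading of the singularities of the formula from \tref{theorem3209}.
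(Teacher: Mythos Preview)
Your proof is correct and matches the approach the paper has in mind: the paper states \corref{corG} without an explicit proof, but the identical residue computation $\lim_{z\to 1}(1-z)^{2n-1}F_G(z) = 2/|G|$ appears verbatim in the proof of \propref{keyprop1}, so your extraction of $|G|$ from the top-order pole at $z=1$ is exactly the intended argument. Your justification that $m_1(g)$ is even for $g\in SO(2n)$ (hence $\le 2n-2$ for $g\neq I$) is the one ingredient the paper leaves implicit, and you have handled it correctly.
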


\section{3-Dimensional Orbifold Lens Spaces}

For 3-dimensional manifold lens spaces, it is known that if two lens spaces are isospectral then they are also isometric (\cite{IY} and \cite{Y}). We will generalize this result to the orbifold case.

Using the notation adopted in the previous section, we write the two isospectral lens spaces 
as $L_1 = L(q: p_1, p_2)$ and $L_2 = L(q: s_1, s_2)$. Now there are only five possibilities:
\begin{enumerate}
\item[Case 1] Both $L_1$ and $L_2$ are manifolds. In this case $gcd(p_i, q) = 1 = gcd(s_i, q)$ for $i = 1, 2$.
\item[Case 2] One of the two lens spaces, say $L_1$ is a manifold, while the other, $L_2$ is an orbifold with non-trivial isotropy groups. This means that $gcd(p_1, q) = gcd(p_2, q) = 1$, while at least one of $s_1$ or $s_2$ is not coprime to $q$.
\item[Case 3] Both $L_1$ and $L_2$ are orbifolds with non-trivial isotropy groups so that exacly one of $p_1$ or $p_2$ is coprime to $q$ and exactly one of $s_1$ or $s_2$ is coprime to $q$. 
\item[Case 4] Both $L_1$ and $L_2$ are orbifolds with non-trivial isotropy groups, but in one case, say for $L_1$, exactly one of $p_1$ or $p_2$ is coprime to $q$, while for the other lens space, $L_2$ neither $s_1$ nor $s_2$ is coprime to $q$.
\item[Case 5] None of $p_1$, $p_2$, $s_1$ and $s_2$ is coprime to $q$.
\end{enumerate}
\noindent With these five cases in mind, we will prove our main theorem:

\begin{thm}\label{thm:6}
Given two 3-dimensional lens spaces $L_1 = L(q: p_1, p_2)$ and $L_2 = L(q: s_1, s_2)$. If $L_1$ is isospectral to $L_2$, then the two lens spaces are isometric.
\end{thm}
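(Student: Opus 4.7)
The plan is to exploit Proposition~\ref{prop329}, which translates isospectrality of $L_1$ and $L_2$ into the equality of rational functions
\[F_q(z: p_1, p_2) = F_q(z: s_1, s_2)\]
on $\mathbb{C}$, and then to recover the combinatorial data of Corollary~\ref{lensspaceisometric}: an integer $l$ coprime to $q$ and signs $e_i \in\{-1,1\}$ such that $(p_1,p_2)$ is a permutation of $(e_1 l s_1, e_2 l s_2)\pmod q$. Corollary~\ref{corG} already ensures that both groups have the same order $q$, so the two generating functions can be compared pole by pole on the unit circle. The five cases listed before the statement partition the argument according to how many of $\{p_1,p_2\}$ and $\{s_1,s_2\}$ are coprime to $q$.

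Case 1 is the classical Ikeda--Yamamoto theorem (\cite{IY},\cite{Y}) and can simply be quoted. For Cases 2 and 4, the multiset $\{\gcd(p_1,q),\gcd(p_2,q)\}$ differs from $\{\gcd(s_1,q),\gcd(s_2,q)\}$, and I would rule these out by observing that this multiset is a spectral invariant: it is encoded in the pole order of $F_q(z)$ at each primitive $d$-th root of unity for $d \mid q$. Concretely, term $l$ in the sum of Corollary~\ref{theorem3210} contributes a pole of order $2\,|\{i : p_i l \equiv 0 \pmod q\}|$ at $z=1$, and an analogous statement holds at each $d$-th root of unity; summing over $l$ extracts invariants that distinguish the manifold isotropy pattern from each orbifold isotropy pattern, ruling out Cases 2 and 4.

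The bulk of the proof is in Cases 3 and 5. Once the singular stratifications agree, I would match the two generating functions by isolating, for each divisor $d \mid q$, the sub-sum of terms whose denominators vanish to a prescribed order at the $d$-th roots of unity; this local information determines each $p_i$ modulo $q/\gcd(p_i,q)$ up to the sign ambiguity $e_i$ and a common multiplier $l$. Combining the partial reconstructions across all divisors of $q$ then produces the permutation identity $(p_1,p_2) \equiv (e_1 l s_1, e_2 l s_2)\pmod q$, and Corollary~\ref{lensspaceisometric} yields the isometry $L_1 \cong L_2$.

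The main obstacle is Case 5, where both $\gcd(p_1,q)$ and $\gcd(p_2,q)$ exceed $1$: the pole data at each root of unity mixes contributions from both factors, so the residue-by-residue analysis must be carried out at several divisors of $q$ and then glued by a Chinese-remainder-type argument to produce the single multiplier $l$ and sign assignment demanded by Corollary~\ref{lensspaceisometric}. Establishing the consistency of these partial reconstructions across divisors is the technical heart of the proof.
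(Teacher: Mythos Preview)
Your overall strategy---translate isospectrality into equality of the rational functions $F_q(z:p_1,p_2)=F_q(z:s_1,s_2)$ and then extract the data of Corollary~\ref{lensspaceisometric} from the pole and residue structure---is exactly the paper's approach, and your handling of Case~1 matches. But the proposal is a plan, not a proof, and the gaps are substantial in Cases~3 and~5.

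For Cases~2 and~4 the paper proceeds differently from what you sketch. Case~2 is dispatched by quoting the result of Gordon--Rossetti that isospectral good orbifolds with a common Riemannian cover are either both singular or both nonsingular; no pole analysis is needed. Case~4 is handled by computing $\lim_{z\to\gamma}(z-\gamma)F_i(z)$ at a primitive $q$-th root $\gamma$: this limit is a nonzero explicit expression for $L_1$ (where one parameter is coprime to $q$) but vanishes for $L_2$ (where neither is), giving an immediate contradiction. Your claim that the multiset $\{\gcd(p_i,q)\}$ is encoded in the pole \emph{orders} is not quite right: in these cases the poles at $\gamma$ are all simple, and what distinguishes the situations is whether the \emph{residue} is zero.

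For Case~3 the paper again computes the residue $\lim_{z\to\gamma}(z-\gamma)F_i(z)$ explicitly, obtaining
\[
\frac{-2\gamma}{q(1-\gamma^{-x+1})(1-\gamma^{x+1})}=\frac{-2\gamma}{q(1-\gamma^{-y+1})(1-\gamma^{y+1})},
\]
and then manipulates this algebraically to force $x\equiv\pm y\pmod q$. Your description ``local information determines each $p_i$ up to sign and common multiplier'' is the desired conclusion, not an argument.

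The real gap is Case~5. You correctly flag it as the hardest and propose to look at residues at primitive $d$-th roots for several $d\mid q$ and glue by a Chinese-remainder argument. The paper does \emph{not} do this, and it is far from clear your scheme works: a single residue at $\gamma^x$ already mixes contributions from many group elements, and you give no mechanism for separating them or for showing that the partial reconstructions at different $d$ are mutually consistent. The paper instead (i) proves directly that $\lim_{z\to\gamma^x}(z-\gamma^x)F_q(z)\ne 0$ via a cotangent-sum positivity argument (Proposition~\ref{propCase5A}), (ii) uses this nonvanishing to show the gcd multisets $\{x,y\}$ and $\{u,v\}$ coincide (Lemma~\ref{Lemma1}), reducing to $L_1=L(q:x,py)$, $L_2=L(q:x,sy)$, and then (iii) equates the two residues to obtain an identity between sums of the form $\sum_t[\cot\frac{\pi}{q}A_t-\cot\frac{\pi}{q}B_t]$, which is analyzed by a careful ordering argument on the residues $A_t,B_t,C_t,D_t\pmod q$ to force one of two congruence conditions that translate, via Corollary~\ref{lensspaceisometric}, into isometry. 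None of this machinery---the nonvanishing proposition, the gcd-matching lemma, or the cotangent-sum analysis---appears in your outline, and without it Case~5 is not proved.
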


\begin{proof}
We will consider each case separately:
\smallskip

\noindent \textbf{Case 1}

In this case $L_1$ and $L_2$ are both manifolds. Ikeda and Yamamoto proved this case (see \cite{IY} and \cite{Y}).
\smallskip

\noindent \textbf{Case 2}

We know that whenever two isospectral good orbifolds share a common Riemannian cover, their respective singular sets are either both trivial or both non-trivial \cite{GR}. Therefore, for orbifold lens spaces we can't have a situation where two lens spaces are isospectral, but one has a trivial singular set while the other has a non-trivial singular set. So this case is not possible.
\smallskip

\noindent \textbf{Case 3} 

By multiplying the entries of $L_1$ and $L_2$ by appropriate numbers coprime to \textit{q} we can rewrite $L_1 = L(q:1, x)$ and $L_2 = L(q:1, y)$, where $x$ and $y$ are not coprime to $q$. 
Let $F_1(z$) [resp. $F_2(z)]$ be the generating function associated to the spectrum of $L_1$ [resp.$L_2$]. Let $\gamma$ be a primitive $q$-th root of unity.

Now,
\begin{align}\label{cong1}
&\lim\limits_{z\rightarrow {\gamma}}(z - \gamma)F_1(z) \notag \\
& = \lim\limits_{z\rightarrow {\gamma}}\frac{1}{q}\sum\limits_{l=1}^{q}\frac{(z-{\gamma})(1 - z^2)}{(1 - {\gamma}^{l}z)(1 - {\gamma}^{-l}z)(1 - {\gamma}^{xl}z)(1 - {\gamma}^{-xl}z)} \notag\\
&= \lim\limits_{z\rightarrow {\gamma}}\frac{-\gamma}{q}\sum\limits_{l=1}^{q}\frac{(1-{\gamma}^{-1}z)(1 - z^2)}{(1 - {\gamma}^{l}z)(1 - {\gamma}^{-l}z)(1 - {\gamma}^{xl}z)(1 - {\gamma}^{-xl}z)}
\end{align}

\noindent Each term of the sum vanishes unless $(1-{\gamma}^{-1}z)$ cancels one of the four terms in the denominator. This occurs if one of the following congruences has a solution:
\smallskip
\begin{itemize}
\item[(1)] $l + 1 \equiv 0(\text{mod } q),$
\item[(2)] $-l + 1 \equiv 0(\text{mod } q),$
\item[(3)] $xl + 1 \equiv 0(\text{mod } q),$
\item[(4)] $-xl + 1 \equiv 0(\text{mod } q).$ 
\end{itemize}
\smallskip
Congruences (3) and (4) have no solution as $x$ is not coprime to $q$. The solution to (1) is $l = q - 1$, and the solution to (2) is $l = 1$. Substituting in (\ref{cong1}), we get 

\[\lim\limits_{z\rightarrow {\gamma}}(z - \gamma)F_1(z) = \frac{-2\gamma}{q(1 - {\gamma}^{-x+1})(1 - {\gamma}^{x+1})}.\]

By the same argument, we get

\[\lim\limits_{z\rightarrow {\gamma}}(z - \gamma)F_2(z) = \frac{-2\gamma}{q(1 - {\gamma}^{-y+1})(1 - {\gamma}^{y+1})}.\]

Since 
\[\lim\limits_{z\rightarrow {\gamma}}(z - \gamma)F_1(z)=\lim\limits_{z\rightarrow {\gamma}}(z - \gamma)F_2(z),\]  
we get
\begin{align*}
&\frac{-2\gamma}{q(1 - {\gamma}^{-x+1})(1 - {\gamma}^{x+1})} = \frac{-2\gamma}{q(1 - {\gamma}^{-y+1})(1 - {\gamma}^{y+1})},\\
&\implies \frac{1}{[1 - ({\gamma}^{-x+1} + {\gamma}^{x+1}) + {\gamma}^2]} = \frac{1}{[1 - ({\gamma}^{-y+1} + {\gamma}^{y+1}) + {\gamma}^2]},\\
&\implies ({\gamma}^{-x+1} + {\gamma}^{x+1}) = ({\gamma}^{-y+1} + {\gamma}^{y+1}).
\end{align*}

\noindent Since $\gamma \neq 0$, we get
\begin{align*}
&({\gamma}^{-x} + {\gamma}^{x}) = ({\gamma}^{-y} + {\gamma}^{y}),\\
&\implies (\frac{1}{\gamma^x}+\gamma^x) = (\frac{1}{\gamma^y}+\gamma^y),\\
&\implies (\frac{1+\gamma^{2x}}{\gamma^x}) = (\frac{1+\gamma^{2y}}{\gamma^y}),\\
&\implies ({\gamma}^{y} + {\gamma}^{2x + y}) = ({\gamma}^{x} + {\gamma}^{x +2y}),\\
&\implies ({\gamma}^{y} - {\gamma}^{x + 2y}) = ({\gamma}^{x} - {\gamma}^{2x +y}),\\
&\implies \gamma^y(1 - {\gamma}^{x + y}) = \gamma^x(1 - {\gamma}^{x +y}),\\
&\implies (\gamma^y - \gamma^x)(1 - {\gamma}^{x + y}) = 0,\\
&\implies (\gamma^y - \gamma^x) = 0  \text
{ or } (1 - {\gamma}^{x + y}) = 0,\\
&\implies x \equiv y (\text{mod } q) \text
{ or } x \equiv -y (\text{mod } q).
\end{align*}
\noindent Thus, by Corollary \ref{lensspaceisometric} we get that $L_1$ and $L_2$ are isometric.
\smallskip

\noindent \textbf{Case 4}

By the same argument as in Case 3, we get 

\[\lim\limits_{z\rightarrow {\gamma}}(z - \gamma)F_1(z) = \frac{-2\gamma}{q(1 - {\gamma}^{-x+1})(1 - {\gamma}^{x+1})}.\]

\noindent However, 
\[\lim\limits_{z\rightarrow {\gamma}}(z - \gamma)F_2(z) = 0\] because the congruences (1) - (4) in Case 3 become  
\smallskip

\begin{itemize}
\item[(1')] $s_1l + 1 \equiv 0(\text{mod } q),$
\item[(2')] $-s_1l + 1 \equiv 0(\text{mod } q),$
\item[(3')] $s_2l + 1 \equiv 0(\text{mod } q),$
\item[(4')] $-s_2l + 1 \equiv 0(\text{mod } q),$ 
\end{itemize}
\smallskip

\noindent and these congruences have no solutions because $s_1$ and $s_2$ are not coprime to $q$. 

Thus, in this case $L_1$ cannot be isospectral to $L_2$.
\smallskip

\textbf{Case 5}

This is the hardest of all the cases. First, we can simplify the forms of the two lens spaces as follows:

Let $gcd(p_1, q) = x > 1$, $gcd(p_2, q) = y > 1$, $gcd(s_1, q) = u > 1$, and $gcd(s_2, q) = v > 1$. Also without loss of generality we can assume that $y > x$ and $v > u$ because if $x = y$ (resp. $u = v$) then $|G| = q/x$ (resp. $|G| = q/u$), which contradicts our assumption that $|G| = q$. 

We rewrite $L_1 = L(q:ax, by)$ and $L_2 = L(q:cu, dv)$. Since $gcd(ax, q) = gcd(x, q) = x$ and  $gcd(cu, q) = gcd(u, q) = u$, we can multiply the entries of $L_1$ and $L_2$ by appropriate numbers coprime to \textit{q} and rewrite $L_1 = L(q:x, py)$ and $L_2 = L(q:u, sv)$ (see \cite{GP}). We will also assume that $gcd(x, py) = 1 = gcd(u, sv)$ because if say $gcd(x, py) = e > 0$, then we could divide $x$, $py$ and $q$ by \textit{e} and get a lens space with fundamental group of order \textit{q/e} instead of \textit{q}, which is a contradiction.  

In this case we again want to consider a limit of the spectral generating functions for $L_1$ and $L_2$.

\begin{prop}\label{propCase5A}
Suppose $L = L(q:x, py)$ is an orbifold lens space with spectrum generating function $F_q(z)$. 
Then $\lim\limits_{z\rightarrow {\gamma}^{x}}(z - \gamma^{x})F_q(z) \neq 0$, where $\gamma = e^{2\pi i/q}$ is a primitive $q$-th root of unity.
\end{prop}
\begin{proof}
We denote $q_{/x} = \frac{q}{x}$ and $q_{/y} = \frac{q}{y}$. Then
\begin{align}\label{limit1}
\lim\limits_{z\rightarrow {\gamma}^{x}}(z - \gamma^{x})F_q(z) &= \lim\limits_{z\rightarrow {\gamma}^{x}}\frac{1}{q}\sum\limits_{l=1}^{q}\frac{(z-{\gamma}^{x})(1 - z^2)}{(1 - {\gamma}^{xl}z)(1 - {\gamma}^{-xl}z)(1 - {\gamma}^{pyl}z)(1 - {\gamma}^{-pyl}z)} \notag\\
&=\lim\limits_{z\rightarrow {\gamma}^{x}}\frac{-\gamma^x}{q}\sum\limits_{l=1}^{q}\frac{(1-{\gamma}^{-x}z)(1 - z^2)}{(1 - {\gamma}^{xl}z)(1 - {\gamma}^{-xl}z)(1 - {\gamma}^{pyl}z)(1 - {\gamma}^{-pyl}z)}
\end{align}

As before, the terms in the above sum are non-zero iff one of the following congruences has a solution:
\begin{itemize}
\item[(1'')] $xl + x \equiv 0(\text{mod } q),$
\item[(2'')] $-xl + x \equiv 0(\text{mod } q),$
\item[(3'')] $pyl + x \equiv 0(\text{mod } q),$
\item[(4'')] $-pyl + x \equiv 0(\text{mod } q),$ 
\end{itemize}

(3'') implies that  $pyl + x \equiv 0(\text{mod } y)$, so, if (3'') has a solution, it would violate the fact that $gcd(x, y) = 1$. Therefore, (3'') has no solution. Similarly (4'') has no solution. 

The solution to (1'') is $l = tq_{/x} - 1$ and the solution to (2'') is $l = tq_{/x} + 1$ for $t\in \{1, ..., x\}$. Note that for   
 $l = tq_{/x} \pm 1$, 
\[\lim\limits_{z\rightarrow {\gamma}^{x}}\frac{(1-{\gamma}^{-x}z)(1 - z^2)}{(1 - {\gamma}^{xl}z)(1 - {\gamma}^{-xl}z)} = 1\]

 We can, therefore, write (\ref{limit1}) as
\[\lim\limits_{z\rightarrow {\gamma}^{x}}(z - \gamma^{x})F_q(z) = \frac{-2\gamma^x}{q}\sum\limits_{t=1}^{x}\frac{1}{(1 - {\gamma}^{py(tq_{/x} - 1)+ x})(1 - {\gamma}^{-py(tq_{/x} - 1)+ x})} \]

  Writing $\alpha_t = py(tq_{/x} - 1)$, we get 

\begin{align*}
\lim\limits_{z\rightarrow {\gamma}^{x}}(z - \gamma^{x})F_q(z) 
&= \frac{-2\gamma^x}{q}\sum\limits_{t=1}^{x}\frac{1}{(1 - \gamma^{(\alpha_t + x)})(1 - \gamma^{-(\alpha_t - x)}}\\ 
&= \frac{-2\gamma^x}{q}\sum\limits_{t=1}^{x}\frac{1}{\gamma^{(\alpha_t + x)}
(\gamma^{-(\alpha_t - x)} -  \gamma^{-(\alpha_t + x)})}\Big[\frac{1}{1 - \gamma^{-(\alpha_t + x)}} - \frac{1}{1 - \gamma^{-(\alpha_t - x)}}\Big]\\
&= \frac{-2\gamma^x}{q(\gamma^{2x} - 1)}\sum\limits_{t=1}^{x}\Big[\frac{1}{1 - \gamma^{-(\alpha_t + x)}} - \frac{1}{1 - \gamma^{-(\alpha_t - x)}}\Big]\\
&= \frac{-2}{i 2 q \sin{\frac{2\pi x}{q}}}\sum\limits_{t=1}^{x}\Big[\frac{1}{1 - e^{-i2\pi(\alpha_t + x)/q}} - \frac{1}{1 - e^{-i2\pi(\alpha_t - x)/q}}\Big]
\end{align*}

By writing $a_t = \alpha_t + x$ and $b_t = \alpha_t - x$, we can rewrite the above as:
\begin{align*}
\lim\limits_{z\rightarrow {\gamma}^{x}}(z - \gamma^{x})F_q(z)
&= \frac{-2}{i 2 q \sin{\frac{2\pi x}{q}}}\sum\limits_{t=1}^{x}\Big[\frac{1}{1 - e^{-i2\pi a_t/q}} - \frac{1}{1 - e^{-i2\pi b_t/q}}\Big]\\
&= \frac{1}{2q \sin{\frac{2\pi x}{q}}}\sum\limits_{t=1}^{x}\Big[\frac{2i}{1 - e^{-i2\pi a_t/q}} - \frac{2i}{1 - e^{-i2\pi b_t/q}}\Big]
\end{align*}
Now, using the identity $\cot\theta + i = \frac{2i}{1 - e^{-2i\theta}}$, we get
\begin{equation}\label{eq:cot}
\lim\limits_{z\rightarrow {\gamma}^{x}}(z - \gamma^{x})F_q(z) 
= \frac{1}{2q \sin{\frac{2\pi x}{q}}}\sum\limits_{t=1}^{x}\Big[\cot\frac{\pi a_t}{q} - \cot\frac{\pi b_t}{q}\Big].
\end{equation}

The above limit can only be $0$ if 
\begin{align*}
&\sum\limits_{t=1}^{x}\Big[\cot\frac{\pi a_t}{q} - \cot\frac{\pi b_t}{q}\Big] \\
&= \sum\limits_{t=1}^{x}\Big[\cot\frac{\pi}{q}[tpyq_{/x} - (py - x)] - \cot\frac{\pi}{q}[tpyq_{/x} - (py + x)]\Big] = 0.
\end{align*}
Now (mod $q$) both $a_t$ and $b_t$ have $x$ values each between $0$ and $\pi$. 

Consider the following two sets of positive integers (mod $q$):
$$A = \{A_t: A_t = a_t(\text{mod } q)\text{, } t = 1, 2, ..., x\}$$ and $$B = \{B_t: B_t = b_t(\text{mod } q)\text{, } t = 1, 2, ..., x\}.$$

Suppose $min\{A\} = A_j$ and $min\{B\} = B_k$. Now we have the following possibilities:
\begin{itemize}
\item[(i)] $A_j > B_k$. Then it is easy to check that $A_{j+t} > B_{k+t}$ for $t = 0, 1, ..., x-1$, since $a_{j+t} - b_{k+t} = a_j - b_k$. So, we can re-write (\ref{eq:cot}) as 
\begin{equation}\label{eq:cot2}
\lim\limits_{z\rightarrow {\gamma}^{x}}(z - \gamma^{x})F_q(z) 
= \frac{1}{2q \sin{\frac{2\pi x}{q}}}\sum\limits_{t=0}^{x-1}\Big[\cot\frac{\pi}{q}A_{j+t} - \cot\frac{\pi}{q}B_{k+t}\Big].
\end{equation}
We know that if $0 < B < A < \pi$, then $\cot A - \cot B < 0$.  Since, in the above equation $0 < B_{k+t} < A_{j+t} < \pi$ for all $t$, each pair gives us a negative value, and therefore (\ref{eq:cot2}) is negative. 

\item[(ii)] $A_j < B_k$. Then using a similar argument as above, we will have (\ref{eq:cot2}) positive.

\item[(iii)] $A_j = B_k$. This means $a_j - b_k \equiv (j - k)pyq_{/x} + 2x \equiv 0 (\text{mod } q)$. But this means that $y|2x$, which is not possible since we are assuming that $gcd(x, y) = 1$ and $x < y$. 

\end{itemize}
\noindent This proves the proposition.
\end{proof}

We will also need the following lemma to prove the theorem for Case 5:

\begin{lemma} \label{Lemma1}
Suppose $L_1 = L(q:x, py)$ and $L_2 = L(q:u, sv)$ are two isospectral lens orbifolds where $gcd(x, q) = x$, $gcd(py, q) = y$,  $gcd(u, q) = u$ and $gcd(sv, q) = v$. Then either $u = x$ and $v = y$, or $u = y$ and $v = x$.
\end{lemma}

\textbf{Note:} If $u = x$ and $v = y$, then $L_1 = L(q:x, py)$ and $L_2 = L(q:x, sy)$; if $u = y$ and $v = x$, then $L_1 = L(q:x, py)$ and $L_2 = L(q:y, sx) = L(q:s^{-1}y, x) = L(q:x, s^{-1}y) $. In either case, this implies that we can write $L_1 = L(q:x, py)$ and $L_2 = L(q:x, s'y)$ where $s' = s$ or $s' = s^{-1}$.

\noindent We now prove the lemma:

\begin{proof}
We denote $q_{/x} = \frac{q}{x}$ and $q_{/y} = \frac{q}{y}$. Then
\begin{align*}
&\lim\limits_{z\rightarrow {\gamma}^{x}}(z - \gamma^{x})F_1(z) = \lim\limits_{z\rightarrow {\gamma}^{x}}\frac{1}{q}\sum\limits_{l=1}^{q}\frac{(z-{\gamma}^{x})(1 - z^2)}{(1 - {\gamma}^{xl}z)(1 - {\gamma}^{-xl}z)(1 - {\gamma}^{pyl}z)(1 - {\gamma}^{-pyl}z)}
\end{align*}
Recall that the only non-zero terms in this limit will be the ones where $xl+x \equiv 0 (\text{mod }q)$ or $-xl+x \equiv 0 (\text{mod }q)$, which gives $l = tq_{/x} - 1$ or $l = tq_{/x} + 1$ for $t\in \{1, ..., x\}$. Also note that for such a $t$, we have

\begin{align*}
\frac{1}{(1 - \gamma^{py(tq_{/x} - 1)+x})(1 -\gamma^{-py(tq_{/x} - 1)+x})} = \frac{1}{(1 - \gamma^{py[(x-t)q_{/x} + 1]+x})(1 -\gamma^{-py[(x-t)q_{/x} + 1]+x})}.
\end{align*}

\noindent These two facts, along with Proposition \ref{propCase5A} give

\begin{align*}
&0 \neq \frac{-2\gamma^x}{q}\sum\limits_{t=1}^{x}\frac{1}{(1 - \gamma^{py(tq_{/x} - 1) + x})(1 - \gamma^{-py(tq_{/x} - 1) + x})} = \lim\limits_{z\rightarrow {\gamma}^{x}}(z - \gamma^{x})F_1(z).
\end{align*}
\noindent Since 
\[\lim\limits_{z\rightarrow {\gamma}^{x}}(z - \gamma^{x})F_1(z)=\lim\limits_{z\rightarrow {\gamma}^{x}}(z - \gamma^{x})F_2(z),\]
\noindent we get 
\begin{align*}
0 \neq \frac{-2\gamma^x}{q}\sum\limits_{t=1}^{x}\frac{1}{(1 - \gamma^{py(tq_{/x} - 1) + x})(1 - \gamma^{-py(tq_{/x} - 1) + x})} = \lim\limits_{z\rightarrow {\gamma}^{x}}(z - \gamma^{x})F_2(z) \\
= \lim\limits_{z\rightarrow {\gamma}^{x}}\frac{-\gamma^x}{q}\sum\limits_{l=1}^{q}\frac{(1-{\gamma}^{-x}z)(1 - z^2)}{(1 - {\gamma}^{ul}z)(1 - {\gamma}^{-ul}z)(1 - {\gamma}^{svl}z)(1 - {\gamma}^{-svl}z)}.
\end{align*}
\noindent So there must be an $l$ such that
\[ul + x \equiv 0 (\text{mod } q),\]
or 
\[-ul + x \equiv 0 (\text{mod } q),\]
or
\[svl + x \equiv 0 (\text{mod } q),\]
or
\[-svl + x \equiv 0 (\text{mod } q).\]

Recall that $u|q$. Then $ul + x \equiv 0 (\text{mod } q)$ or $-ul + x \equiv 0 (\text{mod } q)$ imply that $u|x$. 
Similarly, since $v|q$, we can show that if $svl + x \equiv 0 (\text{mod } q)$ or  $-svl + x \equiv 0 (\text{mod } q)$ then $v|x$. So either $u|x$ or $v|x$.

Now by multiplying the elements of $L_1$ by an appropriate number we can rewrite 
$L_1 = L(q: y, p'x)$. Then applying the same argument as above where we swap the 
roles of $x$ and $y$, we get either $u|y$ or $v|y$.

Suppose $u|x$. Then since $gcd(x, y) = 1$ we can't have $u|y$. Similarly, if $v|x$, then we can't have $v|y$. Therefore, either $u|x$ and $v|y$, or $v|x$ and $u|y$ since if $u$ or $v$ divide both, then it contradicts $gcs(q, x, py)=1$. 

We can swap the roles of $L_1$ and $L_2$ and repeat the above arguments again to get either $x|u$ and $y|v$, or $y|u$ and $x|v$.

If $u|x$ and $v|y$, and at the same time $x|v$ and $y|u$, then $x|y$, which contradicts the fact that $gcd(q, x, y) = 1$. So, the only possibilities are:
\begin{itemize}
\item[i. ] $u|x$, $v|y$, $x|u$ and $y|v$. This means $x = u$ and $y = v$.
\item[ii. ] $v|x$, $u|y$, $x|v$ and $y|u$. This means $x = v$ and $y = u$.
\end{itemize}

This completes the proof for the lemma.
\end{proof}

\noindent \textbf{Remark:} From now on, we can write the two lens spaces as $L_1 = L(q:x, py)$ and $L_2 = L(q:x, sy)$. Further, If $q$ is odd, we can also assume that both $s$ and $p$ are odd since if one of them, say $p$, is even then we can replace the lens space with $L(q:x, (q-p)y)$ which is isometric to $L_1$ and the coefficient $q-p$ is odd. Also, if $q$ is even, then both $x$ and $py$ (resp.$sy$) can't be even simultaneously since $gcd(x, py)$(resp. $sy$); from now on, without loss of generality, if $q$ is even we will assume that $x$ is even and $py$ (resp. $sy$) is odd since if $py$ (resp. $sy$) is even and $x$ is odd, then we can multiply the entries of the lens spaces by an appropriate number to re-write it as $L_1 = L(q:y, p'x)$ (resp. $L_2 = L(q:y, s'x)$). 
\smallskip

We now returning to the proof of Case 5 of our main theorem. Suppose $L_1 = L(q:x, py)$ and $L_2 = L(q:x, sy)$ are isospectral lens spaces with spectrum generating functions $F_1(z)$ and $F_2(z)$ respectively. Using a similar argument as in Proposition \ref{propCase5A} above and the fact that $F_1(z) = F_2(z)$, we will get 

\begin{align}\label{test1}
&\sum\limits_{t=1}^{x}\Big[\cot\frac{\pi}{q}[tpyq_{/x} - (py - x)] - \cot\frac{\pi}{q}[tpyq_{/x} - (py + x)]\Big] \notag \\
= &\sum\limits_{t=1}^{x}\Big[\cot\frac{\pi}{q}[tsyq_{/x} - (sy - x)] - \cot\frac{\pi}{q}[tsyq_{/x} - (sy + x)]\Big].
\end{align}

Since $py > x$ and $sy > x$ (therefore, $pyq_{/x} > xq_{/x} = q$ and $syq_{/x} > xq_{/x} = q$ respectively), the above equation can be written as 

\begin{align}\label{test2}
&\sum\limits_{t=1}^{x}\Big[\cot\frac{\pi}{q}[tq_{/x} - (py - x)] - \cot\frac{\pi}{q}[tq_{/x} - (py + x)]\Big] \notag \\
= &\sum\limits_{t=1}^{x}\Big[\cot\frac{\pi}{q}[tq_{/x} - (sy - x)] - \cot\frac{\pi}{q}[tq_{/x} - (sy + x)]\Big].
\end{align}

Finally, by writing $\alpha y \equiv (q - p)y(\text{ mod }q)$ and $\beta y \equiv (q - s)y(\text{ mod }q)$, we can rewrite the above equality as  

\begin{align}\label{test3}
&\sum\limits_{t=0}^{x-1}\Big[\cot\frac{\pi}{q}[tq_{/x} + \alpha y + x] - \cot\frac{\pi}{q}[tq_{/x} + \alpha y - x]\Big] \notag \\
= &\sum\limits_{t=1}^{x}\Big[\cot\frac{\pi}{q}[tq_{/x} + \beta y + x] - \cot\frac{\pi}{q}[tq_{/x} +\beta y - x]\Big],
\end{align}
\\

Suppose $tq_{/x} + \alpha y + x > 0$ and $tq_{/x} + \alpha y - x < 0$. But this would mean that $y(tq_{/xy} + \alpha) < x$, which can't be true because we are assuming $y > x$. Therefore, for every $t$, both $tq_{/x} + \alpha y + x$ and $tq_{/x} + \alpha y - x$ are positive(with the only exception happening when $y = q_{/x}$, which we will look at a little later). This observation suggests that the minimum values of $tq_{/x} + \alpha y + x$ and $tq_{/x} + \alpha y - x$ occur for the same value of $t$, and in such a case the difference between the minimum values would be $2x$. The same will be the case for the minimum values of $tq_{/x} + \beta y + x$ and $tq_{/x} + \beta y - x$. 

Now consider the following four sets of positive integers (mod $q$):

$$A = \{A_t: A_t \equiv [tq_{/x} + \alpha y + x](\text{mod } q)\text{, } t = 0, 1, ..., x-1\},$$
$$B = \{B_t: B_t \equiv [tq_{/x} + \alpha y - x](\text{mod } q)\text{, } t = 0, 1, ..., x-1\},$$
$$C = \{C_t: C_t \equiv [tq_{/x} + \beta y - x](\text{mod } q)\text{, } t = 0, 1, ..., x-1\},$$ 
$$D = \{D_t: D_t \equiv [tq_{/x} + \beta y + x](\text{mod } q)\text{, } t = 0, 1, ..., x-1\}.$$
\\

\noindent \textbf{REMARK} Note that the minimum values for $A$ and $B$ (resp. $C$ and $D$) occur at the same value of $t$, and consequently, $A_{t} > B_{t}$ (resp.$C_{t} > D_{t}$) for all values of $t\in \{0, 1, ..., x-1\}$. 

Suppose $$min\{A\} = t'q_{/x} + \alpha y + x,$$ $$min\{B\} = t'q_{/x} + \alpha y - x,$$ $$min\{C\} = t''q_{/x} + \beta y + x,$$ and $$min\{D\} = t''q_{/x} + \beta y - x.$$
This means that for each $t$, $A_{t} - B_{t} = 2x = C_{t} - D_{t}$ because $\frac{\pi (t'q_{/x} + \alpha y + x)}{q}$, $\frac{\pi (t'q_{/x} + \alpha y - x)}{q}$, $\frac{\pi (t''q_{/x} + \beta y + x)}{q}$, and $\frac{\pi (t''q_{/x} + \beta y - x)}{q}$ lie between $0$ and $\frac{\pi}{x}( = \frac{\pi q_{/x}}{q}) $ and there are a total of $x$ such combinations with each $\frac{\pi A_{t}}{q}$ (resp.$\frac{\pi B_{t}}{q}$, $\frac{\pi C_{t}}{q}$, and $\frac{\pi D_{t}}{q}$) lying between $\frac{(t-1)\pi}{x}$ and $\frac{t\pi}{x}$, and is simply a translation of $\frac{\pi A_{t-1}}{q}$ (resp.$\frac{\pi B_{t-1}}{q}$, $\frac{\pi C_{t-1}}{q}$, and $\frac{\pi D_{t-1}}{q}$) by $\frac{\pi}{x}$ to the right.
\smallskip

Using the above remark, we can re-write Equation (\ref{test3}) as
\begin{equation}\label{Cot4}
\sum\limits_{t=0}^{x-1}\Big[\cot\frac{\pi}{q}A_{t'+t} - \cot\frac{\pi}{q}B_{t'+t}\Big] - \Big[\cot\frac{\pi}{q}C_{t''+t} - \cot\frac{\pi}{q}D_{t''+t}\Big] = 0
\end{equation}
\smallskip

Now if $\Big[\cot\frac{\pi}{q}A_{t'} - \cot\frac{\pi}{q}B_{t'}\Big] - \Big[\cot\frac{\pi}{q}C_{t''} - \cot\frac{\pi}{q}D_{t''}\Big] < 0 (\text{ resp.} > 0)$, then $\Big[\cot\frac{\pi}{q}A_{t'+t} - \cot\frac{\pi}{q}B_{t'+t}\Big] - \Big[\cot\frac{\pi}{q}C_{t''+t} - \cot\frac{\pi}{q}D_{t''+t}\Big] < 0 (\text{ resp.} > 0)$ for all values of \textit{t}, which means Equation (\ref{Cot4}) will not be satisfied. So, we conclude that for all values of \textit{t} 
\smallskip
$$\Big[\cot\frac{\pi}{q}A_{t'+t} - \cot\frac{\pi}{q}B_{t'+t}\Big] - \Big[\cot\frac{\pi}{q}C_{t''+t} - \cot\frac{\pi}{q}D_{t''+t}\Big] = 0.$$ 
\smallskip

This means one of the following two conditions must be true:

\begin{itemize}
\item[(I)] $\cot\frac{\pi}{q}A_{t'+t} = \cot\frac{\pi}{q}C_{t''+t}$ and $\cot\frac{\pi}{q}B_{t'+t} = \cot\frac{\pi}{q}D_{t''+t}$, or
\item[(II)] $\cot\frac{\pi}{q}A_{t'+t} = -\cot\frac{\pi}{q}D_{t''+t}$ and $\cot\frac{\pi}{q}B_{t'+t} = -\cot\frac{\pi}{q}C_{t''+t}$
\end{itemize}
\smallskip

Condition (I) implies that $A_{t'+t}\equiv C_{t''+t}(\text{ mod }q)$ and $B_{t'+t}\equiv D_{t''+t}(\text{ mod }q)$, i.e., $\exists t_1, t_2 \in \{0, 1, ..., x-1\}$ with $$pyt_1q_{/x} - py + x \equiv A_{t'}(\text{mod }q),$$ $$pyt_1q_{/x} - py - x \equiv B_{t'}(\text{mod }q),$$ $$syt_2q_{/x} - sy + x \equiv C_{t''}(\text{mod }q),$$ and $$syt_2q_{/x} - sy - x \equiv D_{t''}(\text{mod }q)$$ 
\noindent such that 
$$py(t_1+t)q_{/x} - py + x \equiv sy(t_2+t)q_{/x} - sy + x(\text{ mod }q), \forall t\in\{0, 1,..., x-1\} $$
and
$$py(t_1+t)q_{/x} - py - x \equiv sy(t_2+t)q_{/x} - sy - x(\text{ mod }q), \forall t\in\{0, 1,..., x-1\} $$

These congruences imply 
\begin{equation}\label{eq11}
py[(t_1+t)q_{/x} - 1] \equiv sy[(t_2+t)q_{/x} - 1](\text{ mod }q), \forall t\in\{0, 1,..., x-1\}
\end{equation}
\smallskip

\noindent Now, if $t = x - t_1$, then the above congruence becomes
\begin{equation}\label{eq11}
py(q - 1) \equiv sy(t_3q_{/x} - 1)(\text{ mod }q), \text{ where } t_3 = x - t_1 + t_2.
\end{equation}

We know that $gcd(q-1, q) = 1$. We claim that $gcd(t_3q_{/x} - 1, q) = 1$. To see this, suppose $gcd(t_3q_{/x} - 1, q) = d > 1$. But this means $$py(q - 1) \equiv sy(t_3q_{/x} - 1)(\text{ mod }d)\equiv 0 (\text{ mod }d).$$ 
\noindent Now $d$ does not divide $q_{/x}$ since $d|t_3q_{/x} - 1$, which means $d|x$ since $d|q$. Since $gcd(x, py) = 1$, this would imply that $(q - 1) \equiv 0(\text{ mod }d)$, which is a contradiction. Therefore, $gcd(t_3q_{/x} - 1, q) = 1$. Now we see that the corresponding lens spaces are isometric because 
$$L(q; x, py) \sim L(q; -x, -py) \sim L(q; -x, (t_3q_{/x} - 1)sy) \sim L(q; x, sy).$$
\smallskip

Condition (II) implies that $A_{t'+t}\equiv -D_{t''+t}(\text{ mod }q)$ and $B_{t'+t}\equiv -C_{t''+t}(\text{ mod }q)$, i.e., $\exists t_1, t_2 \in \{0, 1, ..., x-1\}$ with $$pyt_1q_{/x} - py + x \equiv A_{t'}(\text{mod }q),$$ $$pyt_1q_{/x} - py - x \equiv B_{t'}(\text{mod }q),$$ $$syt_2q_{/x} - sy + x \equiv C_{t''}(\text{mod }q),$$ and $$syt_2q_{/x} - sy - x \equiv D_{t''}(\text{mod }q)$$ 
\noindent such that 
$$py(t_1+t)q_{/x} - py + x \equiv -sy(t_2+t)q_{/x} + sy + x(\text{ mod }q), \forall t\in\{0, 1,..., x-1\} $$
and
$$py(t_1+t)q_{/x} - py - x \equiv -sy(t_2+t)q_{/x} + sy - x(\text{ mod }q), \forall t\in\{0, 1,..., x-1\} .$$

These congruences imply 
\begin{equation}\label{eq11}
py[(t_1+t)q_{/x} - 1] \equiv -sy[(t_2+t)q_{/x} + 1](\text{ mod }q), \forall t\in\{0, 1,..., x-1\}
\end{equation}

\noindent As before if $t = x - t_1$, then the above congruence becomes
\begin{equation}\label{eq11}
py(q - 1) \equiv -sy(t_3q_{/x} + 1)(\text{ mod }q), \text{ where } t_3 = x - t_1 + t_2.
\end{equation}

\noindent With a similar argument as in Condition (I), we get that $gcd(t_3q_{/x} + 1, q)= 1$, and, as before, the corresponding lens spaces are isometric because 
$$L(q; x, py) \sim L(q; -x, -py) \sim L(q; -x, -(t_3q_{/x} + 1)sy) \sim L(q; x, sy)$$.

Finally, notice that if $y = q_{/x}$ then $gcd(x, q_{/x}) = 1$ and (\ref{test3}) can be written as 

\begin{align}\label{test4}
&\sum\limits_{t=1}^{x}\Big[\cot\frac{\pi}{q}[tq_{/x} + \alpha q_{/x}  + x] - \cot\frac{\pi}{q}[tq_{/x} + \alpha q_{/x} - x]\Big] \notag \\
= &\sum\limits_{t=1}^{x}\Big[\cot\frac{\pi}{q}[tq_{/x} + \beta q_{/x} + x] - \cot\frac{\pi}{q}[tq_{/x} +\beta q_{/x} - x]\Big],
\end{align}
\noindent which can be re-written as

\begin{align}\label{test5}
&\sum\limits_{t=0}^{x}\Big[\cot\frac{\pi}{q}[tq_{/x} + x] - \cot\frac{\pi}{q}[tq_{/x} - x]\Big] \notag \\
= &\sum\limits_{t=1}^{x}\Big[\cot\frac{\pi}{q}[tq_{/x} + x] - \cot\frac{\pi}{q}[tq_{/x} - x]\Big],
\end{align}

In this case, the minimum positive value for $tq_{/x} + x$ is $x$, which occurs when $t=0$, and the minimum positive value for $tq_{/x} - x$ is $q_{/x} - x$, which occurs when $t = 1$. If $q_{/x} > 2x$ (alt. $q_{/x} < 2x$), then the minimum value of $tq_{/x} - x$ (i.e., $q_{/x} - x$) is greater than (alt. less than) the minimum value of $tq_{/x} + x$ (i.e., $x$). Consequently, $A_t < B_{t+1}$ and $C_t < D_{t+1}$ for all $t\in\{0, 1, ..., x-1\}$ (alt. $A_t > B_{t+1}$ and $C_t > D_{t+1}$ for all $t\in\{0, 1, ..., x-1\}$). This means that for each $t$, $B_{t+1} - A_t = q_{/x} - 2x = D_{t+1} - C_{t}$ (alt. $A_t - B_{t+1} = 2x - q_{/x} = C_{t} - D_{t+1}$).
\smallskip 

We can now re-write equation (\ref{test5}) as
\begin{equation}\label{Cot5}
\sum\limits_{t=0}^{x-1}\Big[\cot\frac{\pi}{q}A_{t} - \cot\frac{\pi}{q}B_{t+1}\Big] - \Big[\cot\frac{\pi}{q}C_{t} - \cot\frac{\pi}{q}D_{t+1}\Big] = 0
\end{equation}

Now if $\Big[\cot\frac{\pi}{q}A_{0} - \cot\frac{\pi}{q}B_{1}\Big] - \Big[\cot\frac{\pi}{q}C_{0} - \cot\frac{\pi}{q}D_{1}\Big] < 0 (\text{ resp.} > 0)$, then $\Big[\cot\frac{\pi}{q}A_{t} - \cot\frac{\pi}{q}B_{t+1}\Big] - \Big[\cot\frac{\pi}{q}C_{t} - \cot\frac{\pi}{q}D_{t+1}\Big] < 0 (\text{ resp.} > 0)$ for all values of \textit{t}, which means equation (\ref{Cot5}) will not be satisfied. So we conclude that for all valuesof \textit{t}, $$\Big[\cot\frac{\pi}{q}A_{t} - \cot\frac{\pi}{q}B_{t+1}\Big] - \Big[\cot\frac{\pi}{q}C_{t} - \cot\frac{\pi}{q}D_{t+1}\Big] = 0.$$ 

\noindent Now the rest of the argument is very similar to the case where $y \neq q_{/x}$.

This completes our proof for Case 5.

\end{proof}

\section{4-Dimensional Orbifold Lens Spaces}\label{HigherLens}

It is known that in the manifold case, even dimensional spherical space forms are only the sphere and the real projective spaces \cite{I2}. It is also known that the sphere $\mathbb{S}^n$ is not isospectral to the real projective space $P^n(\mathbb{R})$ \cite{BGM}. 

In the orbifold case, there are many even dimensional spherical space forms with fixed points. We will focus on the 4-dimensional orbifold lens spaces. In \cite{L}, Lauret has classified cyclic subgroups of $SO(2n + 1)$ up to conjugation. According to this classification, any cyclic subgroup $G$ of $SO(2n + 1)$ is represented by $G = <\gamma>$ where 
$\gamma = diag(R(\frac{2\pi p_1}{q}), ..., R(\frac{2\pi p_n}{q}), 1)$ and $R(\theta ) = \begin{pmatrix} \cos \theta & \sin \theta \\ -
\sin \theta & \cos \theta \end{pmatrix}$.

In order to prove our theorem for $4$-dimensional orbifold lens spaces, we need a couple of results from \cite{Ba}. We define 
\[ \tilde{g}_{W+} = 
\begin{pmatrix}
 R(p_{1} / q) & & & \text{ {\huge 0}} \\
  & \ddots & & \\
  & & R(p_{n} / q)& \\
   \text{ {\huge 0}} & & & I_W
\end{pmatrix}
\]
and
\[ \tilde{g}'_{W+} = 
\begin{pmatrix}
 R(s_{1} / q) & & & \text{ {\huge 0}} \\
  & \ddots & & \\
  & & R(s_{n} / q)& \\
   \text{ {\huge 0}} & & & I_W
\end{pmatrix}
\] \\
where $I_W$ is the $W \times W$ identity matrix for some integer $W$. We can
define $\tilde{G}_{W+}$ $= \langle \tilde{g}_{W+} \rangle$ and 
$\tilde{G}'_{W+} = \langle \tilde{g}'_{W+} \rangle$. Then $\tilde{G}_{W+}$ and
$\tilde{G}'_{W+}$ are cyclic groups of order $q$. We define lens spaces 
$\tilde{L}_{W+} = S^{2n+W-1} / \tilde{G}_{W+}$ and $\tilde{L}'_{W+} = S^{2n+W-1}
/ \tilde{G}'_{W+}$. Further suppose the corresponding $2n-1$-dimensional orbifold 
lens spaces are given by $L = L(q: p_1, p_2, ..., p_n)$ and $L' = L(q: s_1, s_2, ..., s_n)$.
Then by Lemma 3.2.2 in \cite{Ba} we get

\begin{lemma}\label{lemma422}	
	Let $L$, $L'$, $\tilde{L}_{W+}$ and $\tilde{L}'_{W+}$ be as defined above. Then
$L$ is isometric to $L'$ iff $\tilde{L}_{W+}$ is isometric to 
	$\tilde{L}'_{W+}$.
\end{lemma}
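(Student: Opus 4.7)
The plan is to invoke Lemma \ref{lemma31}, which reduces isometry of spherical space forms to conjugacy of the covering groups in the orthogonal group, and then exploit the block structure of the generators $\tilde{g}_{W+}$ and $\tilde{g}'_{W+}$: each is the block-diagonal sum of the original rotational generator with a trivial $I_W$ summand. The key intrinsic object tying the two blocks together is the fixed-point subspace of the cyclic action in $\mathbb{R}^{2n+W}$, which is forced to coincide with $V := \{0\}^{2n} \times \mathbb{R}^W$; any conjugating orthogonal transformation must preserve this subspace and hence split as a direct sum.

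The forward direction is immediate. If $L$ is isometric to $L'$, then Lemma \ref{lemma31} produces $B \in O(2n)$ and an integer $l$ coprime to $q$ with $B g B^{-1} = (g')^{l}$. Setting $A = \mathrm{diag}(B, I_W) \in O(2n+W)$ gives $A\,\tilde{g}_{W+}\,A^{-1} = (\tilde{g}'_{W+})^{l}$, so $\tilde{G}_{W+}$ and $\tilde{G}'_{W+}$ are conjugate in $O(2n+W)$, and Lemma \ref{lemma31} applied in the other direction yields $\tilde{L}_{W+} \cong \tilde{L}'_{W+}$.

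The reverse direction is where the real work lies. Suppose $\tilde{L}_{W+}$ is isometric to $\tilde{L}'_{W+}$; then by Lemma \ref{lemma31} there is $A \in O(2n+W)$ and $l$ coprime to $q$ with $A\,\tilde{g}_{W+}\,A^{-1} = (\tilde{g}'_{W+})^{l}$, and the plan is to show $A$ must be block-diagonal $\mathrm{diag}(B, C)$ with $B \in O(2n)$ and $C \in O(W)$. Under the standing convention that $p_i \not\equiv 0 \pmod{q}$ for every $i$, each rotation $R(p_i/q)$ has no nonzero fixed vector, so the fixed-point subspace of $\tilde{g}_{W+}$ in $\mathbb{R}^{2n+W}$ is exactly $V$. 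Since $\gcd(l,q) = 1$ also guarantees $l p_i \not\equiv 0 \pmod{q}$ for each $i$, the fixed-point subspace of $(\tilde{g}'_{W+})^{l}$ is likewise $V$. Because conjugation preserves fixed-point subspaces, $A V = V$, and orthogonality then forces $A V^{\perp} = V^{\perp}$, yielding the block-diagonal form. Restricting the conjugation relation to the first $2n$ coordinates gives $B g B^{-1} = (g')^{l}$, so $G$ and $G'$ are conjugate in $O(2n)$, and Lemma \ref{lemma31} gives $L \cong L'$.

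The main obstacle is the identification of the fixed-point subspace of $\tilde{g}_{W+}$ with $V$; this step depends essentially on the hypothesis $p_i \not\equiv 0 \pmod{q}$ for every $i$ (and similarly for the $s_i$), which is implicit in the definition of the lens-space generators in the paper. If this hypothesis were relaxed one would first have to collapse the degenerate rotational blocks before applying the argument, but as the setup excludes that degeneracy, the plan above goes through cleanly.
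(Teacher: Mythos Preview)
The paper does not actually prove this lemma; it simply cites it as Lemma 3.2.2 from \cite{Ba}. Your argument via Lemma~\ref{lemma31} and the fixed-point subspace of the generator is correct and self-contained, so it supplies a proof the paper itself omits. One small slip: in the reverse direction you write ``$\gcd(l,q)=1$ also guarantees $l p_i \not\equiv 0 \pmod q$'' when analysing $(\tilde{g}'_{W+})^{l}$, but the relevant parameters there are the $s_i$, not the $p_i$; the reasoning is unaffected. Your closing remark about the standing hypothesis $p_i,s_i \not\equiv 0 \pmod q$ is apt: the paper's setup does not explicitly exclude zero residues, but in the context where the lemma is invoked (Section~\ref{HigherLens}, with $n=2$ and $L = L(q:p_1,p_2)$ a genuine $3$-dimensional lens space) the $p_i$ and $s_i$ are necessarily nonzero modulo $q$, so the fixed-point identification $\mathrm{Fix}(\tilde{g}_{W+}) = V$ goes through as you claim.
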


\noindent And by Theorem 3.2.3 in \cite{Ba} we get: 

\begin{thm}\label{theorem423}
	Let $F_{q}^{W+}(z: p_1, \ldots, p_n,0)$ be the generating 
	function associated to the spectrum of $\tilde{L}_{W+}$. Then on the domain
$\big \{z \in \mathbf{C} \big \arrowvert \abs{z} < 1 \big \}$, 
	\begin{align*}
		F_{q}^{W+}(z: p_1, \ldots, p_n, 0) = \frac{ (1 + z )}{(1-z)^{W-1}} \cdot
\frac{1}{q} \sum_{l=1}^{q} \frac{1}{\prod_{i = 1}^{n} (z - \gamma^{p_i l})(z -
\gamma^{-p_i l})}
	\end{align*}
\end{thm}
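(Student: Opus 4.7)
The plan is to derive the formula as a direct application of the spectrum generating function identity from \thmref{theorem3209}, once it is observed that this identity extends from $SO(2n)$ to any finite orthogonal subgroup acting on a sphere of arbitrary dimension, and then to exploit the block-diagonal structure of $\tilde{g}_{W+}$ to rewrite the resulting determinant in the claimed form.

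First, I would note that although \thmref{theorem3209} is stated for $G \subset SO(2n)$ acting on $\mathbb{S}^{2n-1}$, its proof uses only two ingredients that make no reference to the parity of the ambient dimension: the orthogonal decomposition $P^k = H^k \oplus r^2 P^{k-2}$ from \propref{ch3inducedlinmap}, and the Molien-type identity $\sum_{k \geq 0} \chi_{P^k}(g) z^k = 1/\det(I_m - gz)$ for $g \in O(m)$ acting on complex homogeneous polynomials. Applying the same argument to $\tilde{G}_{W+} \subset SO(2n+W)$ acting on $\mathbb{S}^{2n+W-1}$ therefore yields
\[
F_{q}^{W+}(z: p_1, \ldots, p_n, 0) = \frac{1-z^2}{q} \sum_{l=1}^{q} \frac{1}{\det\bigl(I_{2n+W} - \tilde{g}_{W+}^l \, z\bigr)}.
\]

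Next, I would evaluate the determinant using the block structure of $\tilde{g}_{W+}^l$: the first $n$ blocks $R(p_i l / q)$ have complex eigenvalues $\gamma^{\pm p_i l}$ and the final block is $I_W$, so
\[
\det\bigl(I_{2n+W} - \tilde{g}_{W+}^l \, z\bigr) = (1-z)^W \prod_{i=1}^n (1 - \gamma^{p_i l} z)(1 - \gamma^{-p_i l} z).
\]
The factorizations $1 - \gamma^{\pm p_i l} z = -\gamma^{\pm p_i l}(z - \gamma^{\mp p_i l})$, together with $\gamma^{p_i l}\gamma^{-p_i l}=1$, give the identity $(1 - \gamma^{p_i l} z)(1 - \gamma^{-p_i l} z) = (z - \gamma^{p_i l})(z - \gamma^{-p_i l})$. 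Writing $1-z^2 = (1-z)(1+z)$ and cancelling one factor of $(1-z)$ against the $(1-z)^W$ coming from the identity block then produces the asserted closed form.

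The only conceptually non-routine step is the first one: verifying that the hypothesis $G \subset SO(2n)$ in \thmref{theorem3209} is cosmetic and that the proof carries over verbatim to any finite subgroup of $O(m)$ acting on $\mathbb{S}^{m-1}$, regardless of the parity of $m$. Once this is granted, the remaining steps consist of an elementary determinant calculation and a small algebraic simplification, so I expect no further obstacles.
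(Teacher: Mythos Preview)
Your proposal is correct. The paper itself does not give a proof of this statement; it simply imports the result from \cite{Ba} (Theorem 3.2.3 there). Your argument is the natural direct one and is almost certainly what appears in \cite{Ba}: apply the generating-function identity of \thmref{theorem3209} in ambient dimension $2n+W$, evaluate $\det(I_{2n+W}-\tilde g_{W+}^{\,l}z)$ block by block, use the identity $(1-\gamma^{p_il}z)(1-\gamma^{-p_il}z)=(z-\gamma^{p_il})(z-\gamma^{-p_il})$ already noted in the proof of \corref{theorem3210}, and cancel one factor of $(1-z)$. Your remark that the hypothesis $G\subset SO(2n)$ in \thmref{theorem3209} is inessential---the Molien series argument and the decomposition $P^k=H^k\oplus r^2P^{k-2}$ work in any dimension---is exactly right, and since $\det\tilde g_{W+}=1$ one even has $\tilde G_{W+}\subset SO(2n+W)$, so for even $W$ no extension is strictly needed.
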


\smallskip

Now suppose $n = 2$. Let 

\[ \tilde{g}_{1} = 
\begin{pmatrix}
R(p_{1} / q) & & & \text{ {\huge 0}} \\
& & R(p_{2} / q)& \\
\text{ {\huge 0}} & & & 1
\end{pmatrix}
\]
\noindent and 
\[ \tilde{g}_{2} =  
\begin{pmatrix}
R(s_1 / q) & & & \text{ {\huge 0}} \\
& & R(s_2/ q)& \\
	\text{ {\huge 0}} & & & 1
	\end{pmatrix}.
	\]
\noindent Suppose there are $4$-dimensional orbifold lens spaces $O_1 = \mathbb{S}^4/\tilde{G}_1$ (denoted by $L(q: p_1, p_2, 0)$) and $O_2 = \mathbb{S}^4/\tilde{G}_2$ (denoted by $L(q: s_1, s_2)$), where $\tilde{G}_1 = <\tilde{g}_1>$ and $\tilde{G}_2 = <\tilde{g}_2>$. Further suppose the corresponding $3$-dimensional orbifold lens spaces are given by $L_1 = L(q: p_1, p_2)$ and $L_2 = L(q: s_1, s_2)$.

We now prove the following theorem for $4$-dimensional orbifold lens spaces:

\begin{thm}\label{thm:7}
Given $O_1$, $O_2$, $\tilde{G}_1$ and $\tilde{G}_2$ as above. If $O_1$ and $O_2$ are isospectral then they are isometric.
\end{thm}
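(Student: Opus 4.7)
The plan is to reduce the four-dimensional case to the three-dimensional case already established in \thmref{thm:6}, using the relationship between the spectral generating functions that follows from \thmref{theorem423} together with \lemref{lemma422} to transfer the isometry back up.

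First I would write down the generating function for $O_i = \mathbb{S}^4/\tilde{G}_i$. Since $2n+W-1 = 4$ with $n = 2$, we have $W = 1$, so \thmref{theorem423} gives
\[
F_{q}^{1+}(z: p_1, p_2, 0) = (1+z)\cdot \frac{1}{q}\sum_{l=1}^{q}\frac{1}{\prod_{i=1}^{2}(z-\gamma^{p_i l})(z-\gamma^{-p_i l})},
\]
and likewise for the $s_i$'s. Comparing with \corref{theorem3210}, which has the same denominator and an extra factor of $1-z^2 = (1-z)(1+z)$ in the numerator, I observe the key identity
\[
F_{q}(z: p_1, p_2) = (1-z)\, F_{q}^{1+}(z: p_1, p_2, 0),
\]
and the analogous identity for $(s_1, s_2)$.

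Now if $O_1$ and $O_2$ are isospectral then by \propref{prop329} (applied in dimension four) their generating functions agree,
\[
F_{q}^{1+}(z: p_1, p_2, 0) = F_{q}^{1+}(z: s_1, s_2, 0).
\]
Multiplying both sides by $(1-z)$ and invoking the identity above yields
\[
F_{q}(z: p_1, p_2) = F_{q}(z: s_1, s_2),
\]
so by \propref{prop329} again (now in dimension three), the three-dimensional lens spaces $L_1 = L(q:p_1,p_2)$ and $L_2 = L(q:s_1,s_2)$ are isospectral.

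At this point \thmref{thm:6} applies and gives that $L_1$ is isometric to $L_2$. Finally, \lemref{lemma422} (with $W=1$, $n=2$) tells us that $L_1 \cong L_2$ if and only if $\tilde{L}_{1+} \cong \tilde{L}'_{1+}$, i.e.\ $O_1 \cong O_2$, completing the proof. There is no real obstacle here beyond checking the algebraic relationship between the two generating functions; the genuine work has already been done in \thmref{thm:6}, and the role of this theorem is simply to package that work via the stabilization lemma of \cite{Ba}.
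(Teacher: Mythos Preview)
Your proposal is correct and follows essentially the same approach as the paper: both arguments use \thmref{theorem423} to relate the four-dimensional generating function to the three-dimensional one via the factor $(1-z)$, reduce to isospectrality of $L_1$ and $L_2$, invoke \thmref{thm:6}, and then apply \lemref{lemma422} to lift the isometry back to $O_1$ and $O_2$. Your version is slightly more explicit in citing \propref{prop329} for the passage between isospectrality and equality of generating functions, but the substance is the same.
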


\begin{proof}
From Theorem \ref{theorem423} we know that on the domain $\big \{z \in \mathbf{C} \big \arrowvert \abs{z} < 1 \big \}$, the spectrum generating functions of $O_1$ and $O_2$, respectively, are, 
\begin{align*}
	F_{q}(z: p_1, p_2, 0) = \frac{1}{q} \sum_{l=1}^{q} \frac{(1 + z)}{\prod_{i = 1}^{2} (z - \gamma^{p_i l})(z -
		\gamma^{-p_i l})}
\end{align*}

 and 
 
\begin{align*}
	F_{q}(z: s_1, s_2, 0) = \frac{1}{q} \sum_{l=1}^{q} \frac{(1 + z)}{\prod_{i = 1}^{2} (z - \gamma^{s_i l})(z -
		\gamma^{-s_i l})}
\end{align*}.

Notice that $F_{q}(z: p_1, p_2) = (1 - z)F_{q}(z: p_1, p_2, 0)$ and $F_{q}(z: s_1, s_2) = (1 - z)F_{q}(z: s_1, s_2, 0)$, where $F_{q}(z: p_1, p_2)$ and $F_{q}(z: s_1, s_2)$ are respectively the spectrum generating functions for the 3-dimensional orbifold lens spaces $L_1 = L(q: p_1, p_2)$ and $L_2 = L(q: s_1, s_2)$. This means that if $O_1$ and $O_2$ are isospectral then $L_1$ and $L_2$ are also isospectral. 

Now, from Theorem \ref{thm:6}, we know that $L_1$ and $L_2$ are isometric. By Lemma \ref{lemma422} we know that $L_1$ is isometric to $L_2$ iff $O_1$ is isometric to $O_2$. This proves the theorem.
\end{proof}

\section{Lens Spaces and Other Spherical Space Forms}

One question still remains: Is an orbifold lens space ever isospectral to an orbifold spherical space form which has non-cyclic fundamental group? 

Our next result proves that an orbifold lens space cannot be isospectral to a general spherical space form with non-cyclic fundamental group. We will use some results from \cite{I2} noting that in some cases his assumption that the acting group is fixed-point free is not used in certain proofs, and therefore, the results hold true for orbifolds. 

\begin{definition}
Let $G$ be  finite group, and let $G_k$ be the subset of $G$ consisting of all elements of order $k$ in G. Let $\sigma(G)$ denote the set consisting of orders of elements in $G$. Then we have
\[G = \cup_{k\in\sigma(G)} G_k   \text{         (disjoint union)}\]
\end{definition}

\noindent The following lemma is proved in \cite{I2} for fixed-point free subgroups of $SO(2n)$, but we note that the proof doesn't require this condition and reproduce the proof from \cite{I2}.

\begin{lemma}\label{keylem1}
Let $G$ be a finite subgroup of $SO(2n)\text{ }(n\geq 2)$. Then the subset $G_k$ is divided into the disjoint union of subsets $C^1_k, ..., C^{i_k}_k$ such that each $C^t_k (t = 1, 2, ..., i_k)$ consists of all generic elements of some cyclic subgroup of order $k$ in $G$.
\end{lemma}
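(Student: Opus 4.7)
The plan is to recognize that this is essentially a purely group-theoretic statement (the ambient $SO(2n)$ plays no role) and to obtain the partition from an equivalence relation on $G_k$ defined by generated subgroup. Here "generic elements" of a cyclic group of order $k$ should be read as "generators", i.e.\ elements of order exactly $k$ inside it.

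First I would introduce on $G_k$ the relation $g \sim h$ if and only if $\langle g \rangle = \langle h \rangle$. Reflexivity, symmetry and transitivity are immediate from the definition, so $\sim$ is an equivalence relation, and it partitions $G_k$ into equivalence classes $C_k^1, \ldots, C_k^{i_k}$.

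Next I would identify each class with a set of generators. For $g \in G_k$, the subgroup $H := \langle g \rangle$ is cyclic of order $k$, so its generators are exactly the powers $g^j$ with $\gcd(j,k) = 1$, of which there are $\varphi(k)$, and all of these have order $k$, hence lie in $G_k$. Conversely, any $h \in G_k$ with $h \in H$ must equal $g^j$ for some $j$ with $\gcd(j,k)=1$ (otherwise the order of $h$ would strictly divide $k$), so $\langle h \rangle = H$ and $h \sim g$. Thus the class of $g$ is precisely the set of generators ("generic elements") of $H$. In particular, if two cyclic subgroups of order $k$ share even a single element of $G_k$, then that element generates both, so they coincide; hence distinct cyclic subgroups contribute disjoint classes.

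Finally I would list the distinct cyclic subgroups of order $k$ arising in this way as $H_1, \ldots, H_{i_k}$, and set $C_k^t$ equal to the set of generators of $H_t$. By the previous step $G_k = \bigsqcup_{t=1}^{i_k} C_k^t$, which is the required decomposition. There is no genuine obstacle here; the only care needed is to check that an element of $G_k$ sitting inside some cyclic subgroup of order $k$ automatically generates that subgroup, which is what forces the classes to be disjoint and prevents double-counting.
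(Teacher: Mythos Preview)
Your proof is correct and follows essentially the same approach as the paper: both partition $G_k$ according to the relation ``$g$ and $g'$ generate the same cyclic subgroup,'' with the paper phrasing the key observation as ``$A_g \cap A_{g'}$ has order $k$ if and only if $A_g = A_{g'}$'' and leaving the rest to the reader. Your write-up simply spells out the equivalence-relation structure and the identification of classes with generator sets more explicitly than the paper's two-line argument.
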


\begin{proof}

For any $g\in G_k$, we denote by $A_g$ the cyclic subgroup of G generated by $g$. Now, for $g, g'\in G_k$ the cyclic group $A_g\cap A_{g'}$ is of order $k$ if and only if $A_g = A_{g'}$. Now the lemma follows from this observation immediately. 

\end{proof}

We now state another lemma (see \cite{I2} for proof) that will be used to prove our result.
\begin{lem}\label{keylem2}
Let $g$ be an element in $SO(2n)\text{ }(n\geq 2)$ and of order $q\text{ }(q\geq 3)$. Set $\gamma = e^{2\pi\sqrt{-1}/q}$. Assume $g$ has eigenvalues $\gamma$, $\gamma^{-1}$, $\gamma^{p_1}$, $\gamma^{-p_1}$,..., $\gamma^{p_k}$, $\gamma^{-p_k}$ with multiplicities $l, l, i_1, i_1, ..., i_k, i_k,$ respectively, where $p_1, ..., p_k$ are integers prime to $q$ with 
$p_i\not\equiv \pm p_j (mod q)$ (for $1\leq i<j\leq k$), $p\not\equiv\pm l (mod q)$ (for $i= 1, ... , k$) and $l + i_1 + ...+ i_k = n$. Then the Laurent expansion of the meromorphic function $\frac{1 - z^2}{\text{det }(1_{2n} - gz)}$ at $z = \gamma$ is
\[\frac{1}{(z - \gamma)^l}\frac{({\sqrt{-1}})^{n+l}{\gamma}^l}{2^{n - l}(1 - {\gamma}^2)^{n - 1}}\prod_{j=1}^{k}\{\cot\frac{\pi}{q}(p_j + 1) - \cot\frac{\pi}{q}(p_j - 1)\}^{i_j} + \text{ lower order terms. }\]
\end{lem}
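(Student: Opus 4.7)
The plan is to compute the Laurent expansion directly, by factoring the determinant into a product indexed by the eigenvalues of $g$, identifying the single factor that produces the pole at $z=\gamma$, evaluating everything else at $z=\gamma$, and then converting the resulting product of cyclotomic constants into the stated cotangent form via standard trigonometric identities.

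First I would write
\[\det(1_{2n}-gz)=(1-\gamma z)^l(1-\gamma^{-1}z)^l\prod_{j=1}^k(1-\gamma^{p_j}z)^{i_j}(1-\gamma^{-p_j}z)^{i_j}.\]
Since $p_j\not\equiv\pm 1\pmod q$ for every $j$, the only factor of this product that vanishes at $z=\gamma$ is $(1-\gamma^{-1}z)^l$; all remaining factors evaluate to nonzero constants there. Writing $1-\gamma^{-1}z=-\gamma^{-1}(z-\gamma)$ gives $(1-\gamma^{-1}z)^l=(-1)^l\gamma^{-l}(z-\gamma)^l$, so the meromorphic function has a pole of exact order $l$ at $z=\gamma$, and the coefficient of $(z-\gamma)^{-l}$ is obtained by plugging $z=\gamma$ into the remaining factors and into $1-z^2=(1-\gamma^2)$. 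This yields the leading coefficient
\[C \;=\; \frac{(1-\gamma^2)(-1)^l\gamma^l}{(1-\gamma^2)^l\,\prod_{j=1}^k\bigl[(1-\gamma^{p_j+1})(1-\gamma^{1-p_j})\bigr]^{i_j}} \;=\; \frac{(-1)^l\gamma^l}{(1-\gamma^2)^{l-1}\,\prod_{j=1}^k\bigl[(1-\gamma^{p_j+1})(1-\gamma^{1-p_j})\bigr]^{i_j}}.\]

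Next I would convert the cyclotomic factors to trigonometric form via $1-\gamma^m=-2i\sin(\pi m/q)\,e^{i\pi m/q}$. Applied to $m=p_j+1$ and $m=1-p_j$ this gives
\[(1-\gamma^{p_j+1})(1-\gamma^{1-p_j})=4\sin\tfrac{\pi(p_j+1)}{q}\sin\tfrac{\pi(p_j-1)}{q}\,\gamma,\]
where I use $\sin\frac{\pi(1-p_j)}{q}=-\sin\frac{\pi(p_j-1)}{q}$ to reabsorb a sign. The identity $\cot A-\cot B=\sin(B-A)/(\sin A\sin B)$ with $A=\pi(p_j+1)/q$, $B=\pi(p_j-1)/q$ then produces
\[\frac{1}{\sin\frac{\pi(p_j+1)}{q}\sin\frac{\pi(p_j-1)}{q}}=\frac{-1}{\sin(2\pi/q)}\Bigl\{\cot\tfrac{\pi}{q}(p_j+1)-\cot\tfrac{\pi}{q}(p_j-1)\Bigr\},\]
so raising to the $i_j$-th power and multiplying over $j$ yields exactly the product appearing in the stated formula, accompanied by a global constant built from $\gamma$, $\sin(2\pi/q)$, and signs.

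The remaining step is the bookkeeping that reconciles this global constant with the claimed form $(\sqrt{-1})^{n+l}\gamma^l/[2^{n-l}(1-\gamma^2)^{n-1}]$, and this is where the main obstacle lies. Using $1-\gamma^2=-2i\sin(2\pi/q)\gamma$ one rewrites $(1-\gamma^2)^{l-1}=(-2i)^{l-1}\sin^{l-1}(2\pi/q)\gamma^{l-1}$ and combines it with the $(n-l)$ factors of $\sin(2\pi/q)^{-1}$ and $(4\gamma)^{-1}$ produced in the previous step (recall $\sum_j i_j=n-l$). After collecting, one gets a prefactor of the form $(\text{sign})\cdot i^{?}\cdot\gamma^{?}/[2^{?}\sin^{n-1}(2\pi/q)]$; reintroducing a single $(1-\gamma^2)^{n-1}=(-2i)^{n-1}\sin^{n-1}(2\pi/q)\gamma^{n-1}$ in the denominator and applying $(-1)^l=i^{2l}$ shows that the phase accumulates to $i^{n+l}$, the power of $\gamma$ in the numerator reduces to $\gamma^l$, and the power of $2$ to $2^{n-l}$. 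This matches the claimed leading term, and the Laurent series at $z=\gamma$ continues with contributions of order $(z-\gamma)^{-l+1},(z-\gamma)^{-l+2},\dots$ which are the asserted lower order terms.
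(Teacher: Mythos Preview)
Your proof is correct. The paper does not actually prove this lemma itself---it simply cites Ikeda \cite{I2} for the proof---so there is no in-paper argument to compare against. Your approach (factor $\det(1_{2n}-gz)$ over the eigenvalues, isolate the unique vanishing factor $(1-\gamma^{-1}z)^l$, evaluate the rest at $z=\gamma$, and convert via $1-\gamma^m=-2i\sin(\pi m/q)e^{i\pi m/q}$ and $\cot A-\cot B=\sin(B-A)/(\sin A\sin B)$) is exactly the natural direct computation, and your bookkeeping of the constants checks out: after substituting $(1-\gamma^2)^{n-l}=(-2i)^{n-l}\sin^{n-l}(2\pi/q)\gamma^{n-l}$ one finds the prefactor collapses to $i^{n+l}\gamma^l/[2^{n-l}(1-\gamma^2)^{n-1}]$ as claimed. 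One small remark: the hypothesis you invoke, that the factors $(1-\gamma^{\pm p_j}z)$ do not vanish at $z=\gamma$, requires $p_j\not\equiv\pm 1\pmod q$; the statement as printed has the evident typo ``$p\not\equiv\pm l$'' where ``$p_i\not\equiv\pm 1$'' is meant, and you correctly use the intended hypothesis.
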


The following proposition is proved by Ikeda for a group $G$ that acts freely. However, we note that the proposition is true even if $G$ does not act freely since the proof does not use the property that $G$ acts freely. 

\begin{prop}\label{keyprop1}
Let $G$ be a finite subgroup of $SO(2n)\text{ } (n\geq 2)$, and let $k \in \sigma(G)$. We define a positive integer $k_0$ by 
\begin{align*}
k_0 &= 2n - 1 \text{     if    } k = 1 \text{ or } 2,\\
    &= max_{g\in G_k} \{\text{max. of multiplicities of eigenvalues of g}\} \text{  if  }k \geq 3.
\end{align*}
Then the generating function $F_G(z)$ has a pole of order $k_0$ at any primitive $k$-th root of 1.
\end{prop}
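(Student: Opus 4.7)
The plan is to study the pole at a primitive $k$-th root of unity $\zeta$ of each summand of
\[
F_G(z) = \frac{1}{|G|}\sum_{g\in G} \frac{1-z^2}{\det(I_{2n}-gz)},
\]
establish an upper bound of $k_0$ for the pole order of the total sum, and then verify the bound is attained. Since each $g\in SO(2n)$ has unit-modulus eigenvalues arising in complex-conjugate pairs, $\det(I_{2n}-gz) = \prod_\lambda (1-\lambda z)^{m_g(\lambda)}$ vanishes at $z=\zeta$ to order $m_g(\bar\zeta) = m_g(\zeta)$, so the $g$-summand has a pole of order $m_g(\zeta)$ at $\zeta$, reduced by one when $\zeta=\pm 1$ owing to the factor $1-z^2$. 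This yields the upper bound $k_0$ on the pole of $F_G$ at $\zeta$.

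For $k=1$ only the identity attains the maximum multiplicity $m_e(1)=2n$ and produces a pole of order $2n-1$; since no other element of $SO(2n)$ has $1$ as an eigenvalue of full multiplicity, the identity's leading coefficient is the unique contributor at that order and cannot be cancelled. An analogous argument handles $k=2$ via the role played by $-I$ in $G_2$. For $k\geq 3$, matching the upper bound requires showing that the leading Laurent coefficients of the contributing summands do not collectively cancel. I would invoke Lemma~\ref{keylem1} to partition $G_k = \bigsqcup_t C_k^t$ into generator sets of distinct cyclic subgroups of order $k$, and then apply Lemma~\ref{keylem2} to extract the leading coefficient at $z=\zeta$ for each $g$ with $m_g(\zeta)=k_0$, obtaining an expression of the form
\[
\frac{(\sqrt{-1})^{n+k_0}\zeta^{k_0}}{2^{n-k_0}(1-\zeta^2)^{n-1}}\prod_j \bigl\{\cot\tfrac{\pi(p_j+1)}{k} - \cot\tfrac{\pi(p_j-1)}{k}\bigr\}^{i_j}.
\]
Strict monotonicity of $\cot$ on $(0,\pi)$ makes each bracketed factor strictly negative, so this coefficient is a nonzero complex number whose sign is governed by the parity of $\sum_j i_j$ (together with the shared complex prefactor). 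The Galois-like action $g\mapsto g^j$ with $\gcd(j,k)=1$ permutes the generators within a single $C_k^t$ while permuting the primitive $k$-th roots, so the multiplicity profile $(k_0, i_1, i_2, \ldots)$ is invariant across $C_k^t$; this forces a common sign for the contributions within each class. Summing over classes $t$ and verifying alignment of signs across classes then produces a nonzero total leading coefficient, giving the matching lower bound.

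The main obstacle is precisely this noncancellation argument: after confirming the pole order cannot exceed $k_0$, one must rule out a conspiracy that annihilates the order-$k_0$ leading coefficients coming from several distinct $g$'s. The closed-form expression from Lemma~\ref{keylem2}, combined with the orbit structure from Lemma~\ref{keylem1}, is the crucial mechanism that makes this sign-tracking feasible, reducing the cancellation question to a comparison of elementary parities of combinatorial data preserved by the Galois action. A secondary subtlety to address is that elements of $G$ whose order is a proper multiple of $k$ may also admit $\zeta$ as an eigenvalue; by passing to the appropriate power one lands in $G_k$ and confirms that such contributions are subsumed by the $G_k$-maximum defining $k_0$, so they cannot raise the pole order past $k_0$.
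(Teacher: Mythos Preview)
Your overall strategy matches the paper's: split $F_G$ into summands, bound pole orders term-by-term, and use Lemmas~\ref{keylem1} and~\ref{keylem2} together with the monotonicity of $\cot$ on $(0,\pi)$ to show the top-order Laurent coefficients do not cancel. There are, however, two genuine gaps in your execution.

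First, you invoke Lemma~\ref{keylem2} directly at an arbitrary primitive $k$-th root $\zeta$, writing the leading coefficient with $\zeta$ in the prefactor but $\cot\tfrac{\pi(p_j\pm 1)}{k}$ in the product. Lemma~\ref{keylem2} is stated only for the specific root $\gamma=e^{2\pi i/k}$; at $\zeta=\gamma^t$ the eigenvalue labels $p_j$ must be replaced by $p_j t^{-1}\pmod k$, and the cotangent arguments change accordingly, so the sign analysis you sketch does not apply verbatim. The paper sidesteps this by a Galois argument you omit: the Laurent coefficients $a_i(t)$ of $F_G$ at $z=\gamma^t$ lie in $\mathbb{Q}(\gamma)$ and are carried to one another by the automorphism $\gamma\mapsto\gamma^t$, so it suffices to prove $a_{k_0}(1)\neq 0$ at the single root $\gamma$, where Lemma~\ref{keylem2} applies as written.

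Second, your noncancellation argument is left incomplete. You note the sign of the product is ``governed by the parity of $\sum_j i_j$,'' then appeal to a Galois-like orbit argument within each $C_k^t$ and leave ``verifying alignment of signs across classes'' undone. This detour is unnecessary and the missing step is a one-liner: for every $g$ with $m_g(\gamma)=k_0$ one has $\sum_j i_j = n-k_0$, a number depending only on $n$ and $k_0$, not on $g$ or on the class $C_k^t$. Hence every such leading coefficient is a strictly positive real multiple of the fixed nonzero complex number $(-1)^{n-k_0}\,i^{\,n+k_0}\gamma^{k_0}/\bigl(2^{n-k_0}(1-\gamma^2)^{n-1}\bigr)$, and the sum over all contributing $g$ is automatically nonzero. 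This is exactly what the paper means by ``the proposition follows immediately from Lemma~\ref{keylem2}'' after recording $\cot a-\cot b<0$ for $0<b<a<\pi$. Your observation about elements of order a proper multiple of $k$ (bounding their pole contribution by passing to a power in $G_k$) is correct and indeed needed for the upper bound, though the paper leaves it implicit.
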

\begin{proof}
At $z = 1$, we notice that for $g = I_{2n} \in G_1$, we get
\[\lim\limits_{z \rightarrow 1}(1 - z)^{2n - 1}F_G(z) = \frac{2}{|G|},\]
as $g$ has eigenvalue 1 with multiplicity $2n$. So, $F_G(z)$ has a pole of order $2n - 1$ at $z = 1$.

At $z = -1$ we notice that for $g = -I_{2n} \in G_2$, we get 
\[\lim\limits_{z \rightarrow 1}(1 + z)^{2n - 1}F_G(z) = \frac{2}{|G|},\]
as $g$ has eigenvalue -1 with multiplicity $2n$. Also, for any other $g'\in G_2$, the eigenvalue -1 has multiplicity at most $2n$. So $F_G(z)$ has a pole of order $2n - 1$ at $z = -1$ as well.
 
 We now assume $k \geq 3$. Now let $G_k, C^1_k, ..., C^{i_k}_k$ be as in Lemma \ref{keylem1}. Then we have
 \begin{align}\label{equation1}
 \begin{split}
 |G| F_G(z) &= \sum_{g\in G_k}\frac{1 - z^2}{det(I_{2n} - gz)} + \sum_{g\in G - G_k}\frac{1 - z^2}{det(I_{2n} - gz)}\\
 			&= \sum_{j = 1}^{i_k} \sum_{g\in G_k}\frac{1 - z^2}{det(I_{2n} - gz)} + \sum_{g\in G - G_k}\frac{1 - z^2}{det(I_{2n} - gz)}
 \end{split}
 \end{align}
 
 Set $\gamma = e^{2\pi\sqrt{-1}/k}$. For any primitive $k$-th root $\gamma^t$ of 1, where $t$ is an integer prime to $k$, let
 \[\frac{a_{k_0}(t)}{(z - \gamma^t)^{k_0}} + \frac{a_{k_0 - 1}(t)}{(z - \gamma^t)^{k_0 - 1}} + ... + \frac{a_{1}(t)}{(z - \gamma^t)}\]
 be the principal part of the Laurent expansion of $F_G(z)$ at $z = \gamma^t$. Then each coefficient $a_i(t)$ is an element in the $k$-th cyclotomic field $\mathbb{Q}(\gamma)$ over the rational number field $\mathbb{Q}$. The automorphisms $\sigma_t$ of $\mathbb{Q}(\gamma)$ defined by
 \[\gamma\rightarrow\gamma^t\]
 transforms $a_i(1)$ to $a_i(t)$ by Equation (\ref{equation1}). Hence, it is sufficient to show that the generating function $F_G(z)$ has a pole of order $k_0$ at $z - \gamma$, that is, to show that $a_{k_0}(1) \neq 0$.
 
 Note that if $0 < b < a < \pi$, then $\cot a - \cot b < 0$.
 Now the proposition follows immediately from Lemma \ref{keylem2} and Equation (\ref{equation1}).
 \end{proof}
 
 From Proposition \ref{keyprop1}, we get
 
 \begin{cor}\label{cor1}
 Let $\mathbb{S}^{2n-1}/G$ and $\mathbb{S}^{2n-1}/G'$ be two isospectral orbifold spherical space forms. Then $\sigma(G) = \sigma(G')$.
 \end{cor}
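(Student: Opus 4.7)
The plan is to characterize $\sigma(G)$ as exactly the set of integers $k \geq 1$ such that the generating function $F_G(z)$ has a pole at the primitive $k$-th roots of unity, and then apply this characterization to both $G$ and $G'$.

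First I would use Proposition \ref{prop329} to translate the isospectrality of $\mathbb{S}^{2n-1}/G$ and $\mathbb{S}^{2n-1}/G'$ into the identity of generating functions $F_G(z)=F_{G'}(z)$ on $\{|z|<1\}$, which by analytic continuation extends to an identity of meromorphic functions on $\mathbb{C}$. In particular, $F_G$ and $F_{G'}$ have exactly the same poles (with the same orders) on the unit circle $\mathbb{S}^1$.

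Next I would establish the two-sided claim: a primitive $k$-th root of unity $\zeta$ is a pole of $F_G$ if and only if $k \in \sigma(G)$. The forward direction (if $k\in\sigma(G)$, then $\zeta$ is a pole) is handed to us by Proposition \ref{keyprop1}, which asserts that $F_G$ has a pole of order $k_0\geq 1$ at every primitive $k$-th root of $1$ whenever $k\in\sigma(G)$; this is where the nontrivial content sits, since one must rule out cancellations in the sum, which is precisely what Lemma \ref{keylem2} achieves. For the converse, I would use the explicit formula of Theorem \ref{theorem3209},
\[
F_G(z) \;=\; \frac{1}{|G|}\sum_{g \in G}\frac{1-z^2}{\det(I_{2n}-gz)}
\;=\; \frac{1-z^2}{|G|}\sum_{g\in G}\prod_{i}\frac{1}{1-\lambda_i(g)z},
\]
where $\lambda_i(g)$ denotes the eigenvalues of $g$. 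Each summand has poles only at reciprocals of eigenvalues of $g$, so if $\zeta$ is a pole of $F_G$, then $\zeta^{-1}$ (again a primitive $k$-th root of unity) must be an eigenvalue of some $g\in G$. This forces $k$ to divide $\operatorname{ord}(g)$, whence $g^{\operatorname{ord}(g)/k}$ is an element of $G$ of order exactly $k$, so $k\in\sigma(G)$.

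Applying this equivalence to both $G$ and $G'$ and invoking $F_G=F_{G'}$ gives immediately $\sigma(G)=\sigma(G')$. I do not anticipate a real obstacle: the substantive work (the absence of cancellation in the forward implication) has already been done in Proposition \ref{keyprop1}, and the converse is essentially a direct inspection of the formula together with the elementary observation that $k \mid \operatorname{ord}(g)$ produces an element of order $k$ in $\langle g\rangle \subset G$.
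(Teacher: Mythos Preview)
Your proposal is correct and follows essentially the same approach as the paper, which simply states that the corollary follows from Proposition~\ref{keyprop1}. You have made explicit the converse direction (that a pole at a primitive $k$-th root of unity forces $k\in\sigma(G)$), which the paper leaves unstated; your argument for it via the explicit formula of Theorem~\ref{theorem3209} and the observation that $k\mid\operatorname{ord}(g)$ yields an element of order $k$ in $\langle g\rangle$ is exactly the intended justification.
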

 
 We now prove our result
 
 \begin{thm}\label{thm:8}
 Let $\mathbb{S}^{2n-1}/G$ and $\mathbb{S}^{2n-1}/G'$ be two (orbifold) spherical space forms. Suppose $G$ is cyclic and $G'$ is not cyclic. Then $\mathbb{S}^{2n-1}/G$ and $\mathbb{S}^{2n-1}/G'$ cannot be isospectral.
 \end{thm}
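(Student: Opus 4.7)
The plan is to deduce the theorem as an immediate consequence of the two spectral invariants already extracted from the Laplace spectrum: the order $|G|$ (Corollary \ref{corG}) and the set $\sigma(G)$ of orders of elements of $G$ (Corollary \ref{cor1}).

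Suppose for contradiction that $\mathbb{S}^{2n-1}/G$ is isospectral to $\mathbb{S}^{2n-1}/G'$. By Corollary \ref{corG}, $|G|=|G'|$; call this common order $q$. By Corollary \ref{cor1}, $\sigma(G)=\sigma(G')$. Now exploit the hypothesis that $G$ is cyclic: a cyclic group of order $q$ is generated by an element of order exactly $q$, so $q\in\sigma(G)=\sigma(G')$. Hence $G'$ contains an element $h$ of order $q=|G'|$, and therefore $\langle h\rangle$ is a cyclic subgroup of $G'$ of full order $|G'|$, forcing $\langle h\rangle = G'$. Thus $G'$ is cyclic, contradicting the hypothesis.

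The structural reason this argument is so short is that Proposition \ref{keyprop1} has already done the heavy lifting: by analysing the pole order of the generating function $F_G(z)$ at primitive $k$-th roots of unity (via the Laurent expansion supplied by Lemma \ref{keylem2}), it recovers from the spectrum enough information to pin down $\sigma(G)$. Once $\sigma(G)$ is a spectral invariant, the remaining step is an elementary group-theoretic observation --- a finite group is cyclic if and only if it contains an element whose order equals the order of the group. Consequently, there is no substantive new obstacle beyond what has already been established; the only minor point to verify is that the standing convention $G,G'\subset SO(2n)$ (announced just before Theorem \ref{theorem3209}) applies to both groups, so that Corollary \ref{cor1} is available in this setting. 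With that in place, the proof is complete.
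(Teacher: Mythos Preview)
Your proof is correct and follows essentially the same approach as the paper: both argue by contradiction, invoke Corollary~\ref{corG} to get $|G|=|G'|=q$, then use Corollary~\ref{cor1} to conclude that $G'$ contains an element of order $q$ and is therefore cyclic. The additional commentary you provide about Proposition~\ref{keyprop1} doing the heavy lifting is accurate but not part of the paper's terse proof.
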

 
 \begin{proof}
 By Corollary \ref{corG}, we already know that if $|G| \neq |G'|$ then $\mathbb{S}^{2n-1}/G$ and $\mathbb{S}^{2n-1}/G'$ cannot be isospectral. So let us assume that $|G| = |G'| = q$. 
 
Suppose $\mathbb{S}^{2n-1}/G$ and $\mathbb{S}^{2n-1}/G'$ are isospectral. If $G$ is cyclic then it has an element of order $q$. Now, by Corollary \ref{cor1}, $G'$ must also have an element of order $q$, but since $|G'| = q$, that implies that $G'$ is cyclic, which is not true by assumption, and we arrive at a contradiction. This proves the theorem.
\end{proof}

The above results will complete the classification of the inverse spectral problem on orbifold lens spaces in all dimensions, and also imply that orbifold lens spaces cannot be isospectral to any other spherical space forms.

\section{Heat Kernel For Orbifold Lens Spaces}
In the mathematical study of heat conduction and diffusion, a heat kernel is the fundamental solution to the heat equation on a specified domain with appropriate boundary conditions. It is also one of the main tools in the study of the spectrum of the Laplace operator, and is thus of some auxiliary importance throughout mathematical physics. The heat kernel represents the evolution of temperature in a region whose boundary is held fixed at a particular temperature (typically zero), such that an initial unit of heat energy is placed at a point at time t = 0.

In this section we will show that the coefficients of the asymptotic expansion of the heat trace of the heat kernel are not sufficient to obtain the results in the previous sections. More specifically, if two orbifold lens spaces have the same asymptotic expansion of the heat trace, that does not imply that the two orbifolds are isospectral. 

\begin{definition} Let $M$ be a Riemannian manifold. \textit{A heat kernel}, or alternatively, a fundamental solution to the heat equation, is a function 
	\begin{equation}
	K : (0,\infty)\times M \times M \rightarrow M
	\end{equation}
	
	\noindent that satisfies
	
	\begin{enumerate}
		\item $K(t, x, y)$ is $C^1$ in $t$ and $C^2$ in $x$ and $y$;
		\item $\partial K/\partial t$ + $\Delta_2(K) = 0$, where $\Delta_2$ is the Laplacian with respect to the second variable (i.e., the first space variable);
		\item $	\lim_{t\to 0^+}\int_M K(t, x, y)f(y)dy = f(x)$ for any compactly supported function
		$f$ on $M$.
	\end{enumerate}
\end{definition}
	
	\noindent The heat kernel exists and is unique for compact Riemannian manifolds. Its importance stems from the fact that the solution to the heat equation 
	
	\[
	\frac{\partial u}{\partial t} + \Delta (u) = 0,
	\]
	\[
	u : [0,\infty)\times M \rightarrow \mathbb{R},
	\]
	(where $\Delta$ is the Laplacian with respect to the second variable) with initial
	condition $u(0, x) = f(x)$ is given by
	\begin{equation}\label{heatker}
	u(t, x) = \int_M K(t, x, y)f(y)dy.
	\end{equation}
	
	\noindent If $\left\{\lambda_i\right\}$ is the spectrum of $M$ and $\left\{\zeta_i\right\}$ are the associated eigenfunctions
	(normalized so that they form an orthonormal basis of $L^2(M)$), then we can
	write
	\[K(t, x, y) = \sum_{i}e^{-\lambda_it}\zeta_i(x)\zeta_i(y).
	\]
	
\noindent From this, it is clear that the heat trace,
	\[Z(t) = \sum_{i}e^{-\lambda_it},
	\] is
	a spectral invariant. The heat trace has an asymptotic expansion as $t\rightarrow0+$ :
	\[
	Z(t) = (4\pi t)^{dim(M)/2}\sum_{j=1}^{\infty}a_jt^j ,
	\]
	
\noindent where the $a_j$ are integrals over $M$ of universal homogeneous polynomials in the
	curvature and its covariant derivatives (\cite{MP}, see \cite{Gi2} or \cite{CPR} for details). The first
	few of these are
	
	\[a_0 = vol(M),\] 
	\[a_1 =\frac{1}{6}\int_{M}\tau,\]
	\[a_2 =\frac{1}{360} \int_{M}(5{\tau}^2 - 2|\rho|^2 - 10|R|^2),\]
	
\noindent where $\tau = \sum_{a, b = 1}^{dim(M)}R_{abab}$ is the scalar curvature, $\rho = \sum_{c = 1}^{dim(M)}R_{acbc}$ is the Ricci    tensor, and $R$ is the curvature tensor. The dimension, the volume, and the total scalar curvature are thus completely determined by the spectrum. If $M$ is a surface, then the Gauss-Bonnet Theorem implies that the Euler characteristic of $M$ is also a spectral invariant.

\subsection{Heat Trace Results for Orbifolds}

In the case of a $good$ Riemannian orbifold, Donnelly \cite{D} proved the existence of the heat kernel and also proved the following results:

\begin{thm}\label{thm:0}
Let $f:M\rightarrow M$ be an isometry of a manifold M, with fixed point set $\Omega$.
\begin{itemize}
\item[i.] There is an asymptotic expansion as $t\downarrow 0$

	\[
	\sum_{\lambda} Tr({{f_\lambda}^{\sharp}})e^{t\lambda} \approx \sum_{N\in\Omega} (4\pi t)^{-n/2}\sum_{k=0}^{\infty}{t^k}\int _{N} b_k(f,a)d vol_N(a) ,
	\]
	
\noindent where $N$ is a subset of $\Omega$ (and a submanifold of $M$), $\lambda$ is an eigenvalue of $\Delta$, ${{f_\lambda}^{\sharp}}$ is a linear map from $\lambda$-eigenspace to itself induced by \textit{f}, and the functions $b_k(f,a)$ depend only on the germ of \textit{f} and the Riemannian metric of \textit{M} near the points $a\in N$.

\smallskip

\item[ii.] The coefficients $b_k(f, a)$ are of the form $b_k(f, a) = |det B| b_k^{'}(f, a)$ where $b_k^{'}(f, a)$ is an invariant polynomial in the components of $B = (I - A)^{-1}$ (where $A$ denotes the endomorphism induced by \textit{f} on the fiber of the normal bundle over $a \in N$ ) and the curvature tensor \textit{R} and its covariant derivatives at \textit{a}. 

\noindent In particular,	

\begin{align*}
b_0(f, a) = &|det B|,\\
b_1(f, a) = &|det B|(\frac{\tau}{6} + \frac{1}{6}\rho_kk +\frac{1}{3}R_{iksh}B_{ki}B_{h3} + \frac{1}{3}R_{ikth}B_{kt}B_{hi} -\\ &R_{k\alpha h\alpha}B_{ks}B_{hs}).
\end{align*}

\end{itemize}
\end{thm}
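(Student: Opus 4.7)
The plan is to follow the standard localization of the heat kernel near a fixed set. First I would rewrite the twisted heat trace as an integral against the heat kernel: if $\{\zeta_i\}$ is an $L^2$-orthonormal eigenbasis with eigenvalues $\lambda_i$, then
$$\sum_\lambda \text{Tr}(f_\lambda^{\sharp}) e^{-t\lambda} = \sum_i e^{-t\lambda_i}\langle f^*\zeta_i,\zeta_i\rangle = \int_M K(t,x,fx)\, dvol(x),$$
where $K(t,x,y)=\sum_i e^{-t\lambda_i}\zeta_i(x)\zeta_i(y)$. For small $t$, the Minakshisundaram-Pleijel parametrix gives
$$K(t,x,y)\sim (4\pi t)^{-\dim M/2}\, e^{-d(x,y)^2/(4t)}\sum_{k\geq 0} u_k(x,y)\, t^k,$$
with $u_k$ universal local expressions in the curvature and its covariant derivatives. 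Outside a fixed tubular neighborhood of $\Omega$ one has $d(x,fx)\geq \delta>0$, and the Gaussian factor makes that contribution $O(t^\infty)$, so the full asymptotic is driven by contributions from neighborhoods of each fixed component $N$.

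Next I would analyze a single component $N\subset\Omega$ of dimension $\ell$ by working in the tubular neighborhood parametrized through the exponential map of the normal bundle: $x=\exp_a(v)$ with $a\in N$ and $v\in\nu_a N$. Because $f$ is an isometry fixing $N$, the differential $df_a$ preserves the splitting $T_aM=T_aN\oplus\nu_aN$ and acts as an orthogonal endomorphism $A=A(a)$ on the normal fibre; the fact that $a$ is isolated in the normal direction forces $I-A$ to be invertible. Naturality of $\exp$ gives $f(\exp_a v)=\exp_a(Av)$, so
$$d(x,fx)^2 = |(I-A)v|^2 + O(|v|^3).$$
Substituting into the parametrix and integrating over $\nu_aN$, the leading quadratic Gaussian is evaluated by the change of variables $w=(I-A)v/\sqrt{4t}$, yielding
$$(4\pi t)^{-\dim M/2}\int_{\nu_aN} e^{-|(I-A)v|^2/(4t)}\, dv = (4\pi t)^{-\ell/2}|\det(I-A)|^{-1} = (4\pi t)^{-\ell/2}|\det B|,$$
which produces the leading contribution $b_0(f,a)=|\det B|$ after integrating over $N$.

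The higher coefficients $b_k(f,a)$ arise by Taylor-expanding $u_k(x,fx)$, the volume Jacobian of $\exp_a$, and the cubic-and-higher remainder in $d(x,fx)^2$ in powers of $v$, then evaluating the resulting Gaussian moments against the weight $e^{-|(I-A)v|^2/(4t)}$. Every such moment reduces by Wick contraction to a polynomial in the entries of the inverse quadratic form $((I-A)^{\top}(I-A))^{-1}$, hence to a polynomial in the components $B_{ij}=((I-A)^{-1})_{ij}$. The prefactor $|\det B|$ factors out of each moment, and the residual polynomial $b_k'(f,a)$ involves the $B_{ij}$ together with the values at $a$ of the Riemann tensor and its covariant derivatives, since the $u_k$ are themselves universal curvature polynomials. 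This is the form asserted in (ii). Tracking the first-order contributions carefully (the $\tau/6$ from the linear coefficient of $u_1$, the $R_{iksh}B_{ki}B_{h3}$ and $R_{ikth}B_{kt}B_{hi}$ terms arising from the quadratic correction to $d(x,fx)^2$, and the $R_{k\alpha h\alpha}B_{ks}B_{hs}$ term coming from the second-order deviation of $\exp_a$ from its linear model) reproduces the explicit formula given for $b_1(f,a)$.

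The main obstacle I would expect is the analytic justification of integrating the formal expansion term by term. Because the phase $d(x,fx)^2$ is only approximately $|(I-A)v|^2$, one has to split the normal fibre integral with a cutoff at $|v|\lesssim t^{1/2-\epsilon}$, show the tail is exponentially small, and apply a Laplace-method remainder estimate on the bulk, using the nondegeneracy of $(I-A)$ to control the Gaussian moments uniformly in $a$. The algebraic identification of which polynomial combinations of curvature and $B_{ij}$ appear at each order is forced by the invariance properties of the Gaussian moments and is mechanical once the analytic framework is in place; the genuine analytic content is the uniform-in-$t$ remainder bound that turns the formal power series into a true asymptotic expansion.
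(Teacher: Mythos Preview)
The paper does not give its own proof of this theorem: it is stated as a result of Donnelly \cite{D} and then used as a black box in the heat-kernel computations for lens spaces. So there is no ``paper's proof'' to compare against.

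That said, your sketch is precisely the strategy of Donnelly's original argument: rewrite the twisted trace as $\int_M K(t,x,fx)\,dvol$, localize near the fixed components using the Gaussian decay of the Minakshisundaram--Pleijel parametrix, pass to normal coordinates in a tubular neighborhood of each component $N$, and evaluate the normal-fibre integral by Laplace's method with the nondegenerate quadratic form $|(I-A)v|^2$. The identification of $b_0=|\det B|$ and the structure $b_k=|\det B|\,b_k'$ with $b_k'$ a polynomial in the entries of $B$ and in curvature jets then follows from Wick-type moment computations, exactly as you describe. Your outline is therefore faithful to the source the paper cites, not a different route. One small caution: your attribution of the individual summands in the displayed formula for $b_1$ to specific geometric corrections (the $u_1$ term, the cubic remainder in $d(x,fx)^2$, the Jacobian of $\exp_a$) is plausible but the bookkeeping is delicate, and the formula as printed in the paper already contains apparent index typos (e.g.\ $\rho_kk$, $B_{h3}$); if you intend to rederive $b_1$ explicitly you should check against Donnelly's paper rather than the version quoted here.
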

\smallskip

In \cite{DGGW} Donnelly's work is extended to general compact orbifolds, where the heat invariants are expressed in a form that clarifies the asymptotic contributions of each part of the singular set of the orbifold. We will summarise the construction used in \cite{DGGW} in the following remarks before stating their main theorem.

\smallskip

\noindent \textbf{Remarks and Notation:}
\begin{enumerate}
\item An Orbifold \textit{O} was identified with the orbit space $F(O)/O(n)$, where $F(O)$ - a smooth manifold - is the orthonormal frame bundle of \textit{O} and \textit{O(n)} is the orthogonal group, acting smoothly on the right and preserving the fibers. It can be shown that the action of \textit{O(n)} on the frame bundle \textit{F(O)} gives rise to a (Whitney) stratification of \textit{O}. The strata are connected components of the isotropy equivalence classes in \textit{O}. The set of regular points of \textit{O} intersects each connected component $O_0$ of \textit{O} in a single stratum that
constitutes an open dense submanifold of $O_0$. The strata of $O$ are referred as $O$-strata.

\item If $(\tilde{U}, G_U, \pi_U)$ is an orbifold chart on $O$, then it can be shown that the action of $G_U$ on $\tilde{U}$ gives rise to stratifications both of $\tilde{U}$ and of $U$. These are referred to as $\tilde{U}$-strata and $U$-strata, respectively.

\item Let $O$ be a Riemannian orbifold and $(\tilde{U}, G_U, \pi_U)$ an orbifold chart. Let $\tilde{N}$ be a $\tilde{U}$-stratum in $\tilde{U}$. Then it can be shown that all the points in $\tilde{N}$ have the same isotropy group in $G_U$; this group is referred to as the isotropy group of $\tilde{N}$, denoted $Iso(\tilde{N})$.

\item Given a $\tilde{U}$-stratum $\tilde{N}$, denote by $Iso^{max}(\tilde{N})$ the set of all $\gamma\in Iso(\tilde{N})$ such that $\tilde{N}$ is open in the fixed point set $Fix(\gamma)$ of $\gamma$. For $\gamma\in G_U$, it can be shown that each component $W$ of the fixed point set $Fix(\gamma)$ of $\gamma$ (equivalently, the fixed point set of the cyclic group generated by $\gamma$) is a manifold stratified by a collection of $\tilde{U}$-strata, and the strata in $W$ of maximal dimension are open and their union has full measure in $W$. In particular, the union of those $\tilde{U}$-strata $\tilde{N}$ for which $\gamma\in Iso^{max}(\tilde{N})$ has full measure in $Fix(\gamma)$.

\item Let $\gamma$ be an isometry of a Riemannian manifold $M$ and let $\Omega(\gamma)$ denote the set of components of the fixed point set of $\gamma$. Each element of $\Omega(\gamma)$ is a submanifold of $M$. For each non-negative integer $k$, Donnelly \cite{D} defined a real-valued function (cited above), which we temporarily denote $b_k((M, \gamma ), .)$, on the fixed point set of $\gamma$. For each $W\in\Omega(\gamma)$, the restriction of $b_k((M, \gamma ), .)$ to $W$ is smooth. Two key properties of the $b_k$ are:

\begin{itemize}
	\item[(a)] \textit{Locality}. For $a\in W$, $b_k((M, \gamma ), a)$ depends only on the germs at $a$ of the Riemannian metric of $M$ and of the isometry $\gamma$. In particular, if $U$ is a $\gamma$-invariant neighborhood of $a$ in $M$, then $b_k((M, \gamma ), a) = b_k((U, \gamma ), a)$.
	
	\item[(b)] \textit{Universality}. If $M$ and $M'$ are Riemannian manifolds admitting the respective isometries $\gamma$ and $\gamma'$, and if $\sigma : M \rightarrow M'$ is an isometry satisfying $\sigma\circ\gamma = \gamma'\circ \sigma$, then $b_k((M, \gamma ), x) = b_k((M', \gamma' ), \sigma(x))$ for all $x \in Fix(\gamma)$.
\end{itemize}
	
\noindent In view of the locality property, we will usually delete the explicit reference to $M$ and rewrite these functions as $b_k(\gamma , .)$, as they are written in \cite{D}.

\item Let $O$ be an orbifold and let $(\tilde{U}, G_U, \pi_U)$ be an orbifold chart. Let $\tilde{N}$ be a $\tilde{U}$-stratum and let $\gamma\in Iso^{max}(\tilde{N})$. Then $\tilde{N}$ is an open subset of a component of $Fix(\gamma)$ and thus, $b_k(\gamma, .) (= b_k(( \tilde{U}, \gamma ), .)$) is smooth on $\tilde{N}$ for each nonnegative integer $k$. Define a function $b_k(\tilde{N}, .)$ on $\tilde{N}$ by

	\[
	b_k(\tilde{N}, x) = \sum_{\gamma\in Iso^{max}(\tilde{N})}b_k(\gamma, x).
	\]
\end{enumerate}

\smallskip

\begin{definition}
	Let $O$ be a Riemannian orbifold and let $N$ be an $O$-stratum.
	\begin{enumerate}
		\item[(i)] For each nonnegative integer $k$, define a real-valued function $b_k(N, .)$ by
		setting $b_k(N, p) = b_k(\tilde{N}, \tilde{p})$ where $(\tilde{U}, G_U, \pi_U)$ is any orbifold chart about $p$, $\tilde{p}\in {\pi_U}^{-1}(p)$, and $\tilde{N}$ is the $\tilde{U}$-stratum through $\tilde{p}$. 
		\item[(ii)] The Riemannian metric on $O$ induces a Riemannian metric - and thus a volume element - on the manifold $N$. Set
		\[
		I_N: = (4\pi t)^{-dim(N)/2}\sum_{k=0}^{\infty}{t^k}\int _{N} b_k(N,x)d vol_N(x),
		\]
		where $d vol_N$ is the Riemannian volume element.
		\item[(iii)] Set
		\[
		I_0 = (4\pi t)^{-dim(O)/2}\sum_{k=0}^{\infty}a_k (O){t^k},
		\]
		where the $a_k(O)$ (which we will usually write simply as $a_k$) are the familiar heat
		invariants. In particular, $a_0 = vol(O)$, $a_1 = \frac{1}{6}\int_{O}\tau(x) d volO(x)$,
		and so forth. Observe that if $O$ is finitely covered by a Riemannian manifold $M$ (say, $O = G\backslash M$) then $a_k(O) = \frac{1}{|G|}a_k(M)$.
	\end{enumerate}
\end{definition}

We now state the theorem that \cite{DGGW} proved:

\begin{thm}\label{thm:1}
	Let O be a Riemannian orbifold and let $\lambda_1 \leq \lambda_2 \leq ...$ be the
	spectrum of the associated Laplacian acting on smooth functions on O. The heat
	trace $\sum_{j=1}^{\infty}e^{-\lambda_{j}t}$ of O is asymptotic as $t \rightarrow 0^+$ to
	\[
	I_0 + \sum_{N \in S(O)}\frac{I_N}{|Iso(N)|}, 
	\]
	where $S(O)$ is the set of all O-strata, $|Iso(N)|$ is the order of the isotropy at each $p\in N$, and $Iso(p)$ is the conjugacy class of subgroups of $O(n)$. This asymptotic expansion is of the form
	\[
	(4\pi t)^{-dim(O)/2}\sum_{j = 0}^{\infty}c_j t^{j/2} 
	\]
	for some constants $c_j$ .
\end{thm}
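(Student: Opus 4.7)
The plan is to first establish the result for good orbifolds $O=M/G$ (global quotients by a finite group of isometries) and then extend to arbitrary compact Riemannian orbifolds via a partition of unity on orbifold charts. For a good orbifold, eigenfunctions of $\Delta_O$ correspond bijectively to $G$-invariant eigenfunctions of $\Delta_M$. Writing the projector onto $G$-invariants as $P_G=\tfrac{1}{|G|}\sum_{\gamma\in G}\gamma^*$ and cycling the trace gives
$$Z_O(t)\;=\;\mathrm{Tr}_{L^2(M)^G}(e^{-t\Delta_M})\;=\;\frac{1}{|G|}\sum_{\gamma\in G}\sum_\lambda \mathrm{Tr}(\gamma_\lambda^\sharp)\,e^{-\lambda t}.$$
Theorem \ref{thm:0} supplies an asymptotic expansion for each $\gamma$-twisted trace as a sum of integrals of Donnelly's local invariants $b_k(\gamma,\cdot)$ over the components of $\mathrm{Fix}(\gamma)$, so the problem reduces to reorganizing the resulting double sum.

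The identity term $\gamma=\mathrm{id}$ contributes $\tfrac{1}{|G|}Z_M(t)$; since the heat-invariant density $a_k(x)$ is a pointwise universal curvature polynomial, it descends to $O$ and yields exactly $I_0$. For the remaining sum over $\gamma\neq\mathrm{id}$, I would reindex by $O$-strata: each component $W\in\Omega(\gamma)$ has an open, full-measure part that is a finite union of $M$-strata $\tilde N$ with $\gamma\in\mathrm{Iso}^{\max}(\tilde N)$, while lower-dimensional substrata of $W$ drop out of the smooth integrals. Swapping the order of summation gives
$$\sum_{\gamma\neq\mathrm{id}}\sum_{W\in\Omega(\gamma)}\!\int_W b_k(\gamma,x)\,d\mathrm{vol}_W \;=\; \sum_{\tilde N\subset M}\int_{\tilde N}\!\sum_{\gamma\in\mathrm{Iso}^{\max}(\tilde N)\setminus\{\mathrm{id}\}}\!b_k(\gamma,\tilde x)\,d\mathrm{vol}_{\tilde N},$$
and the inner $\gamma$-sum is exactly $b_k(\tilde N,\tilde x)$. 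For each $O$-stratum $N$, its preimage in $M$ is a single $G$-orbit of $M$-strata of size $|G|/|S|$ (where $S$ is the setwise stabilizer of a representative $\tilde N$), while $\pi|_{\tilde N}\colon\tilde N\to N$ is a covering of degree $|S|/|\mathrm{Iso}(N)|$; all strata in the orbit contribute equal integrals by $G$-equivariance of $b_k$, and these two factors multiply to a cumulative weight $|G|/|\mathrm{Iso}(N)|$ in front of $\int_N b_k(N,x)\,d\mathrm{vol}_N$. Combined with the overall prefactor $1/|G|$, this delivers $\sum_{N\in S(O)} I_N/|\mathrm{Iso}(N)|$.

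For a general orbifold, cover $O$ by orbifold charts $(\tilde U_\alpha,G_\alpha,\pi_\alpha)$ with subordinate partition of unity $\{\phi_\alpha\}$, so that $Z_O(t)=\sum_\alpha \int_O \phi_\alpha(x)K_O(t,x,x)\,d\mathrm{vol}_O(x)$. On each chart the preceding computation applies to the local quotient $\tilde U_\alpha/G_\alpha$, and the locality property of Donnelly's invariants guarantees that $b_k(N,x)$ is independent of which chart is used to compute it. The main obstacle is the gluing step: one must check that for a stratum $N$ meeting multiple charts the local isotropy data $\mathrm{Iso}^{\max}(\tilde N_\alpha)\subset G_\alpha$ give the same integrand and that the isotropy order $|\mathrm{Iso}(N)|$ is globally well-defined. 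This reduces to a compatibility check along the transition isomorphisms between orbifold charts, and is the only place where the argument goes essentially beyond the global-quotient case.
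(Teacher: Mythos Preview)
The paper does not contain a proof of this theorem: it is quoted verbatim as a result of Dryden, Gordon, Greenwald and Webb \cite{DGGW}, introduced by the sentence ``We now state the theorem that \cite{DGGW} proved,'' and no argument is given. There is therefore nothing in the paper to compare your proposal against.

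That said, your sketch is a faithful outline of the argument in \cite{DGGW} itself: the global-quotient case is handled by writing $Z_O(t)$ as the $G$-averaged twisted heat trace on $M$, applying Donnelly's Theorem~\ref{thm:0} termwise, and then reindexing the double sum over $(\gamma,W)$ by strata using the full-measure decomposition of $\mathrm{Fix}(\gamma)$ into $\tilde U$-strata with $\gamma\in\mathrm{Iso}^{\max}(\tilde N)$; the general case is then obtained by localizing via orbifold charts and a partition of unity, with the well-definedness of $b_k(N,\cdot)$ and $|\mathrm{Iso}(N)|$ following from the locality and universality properties of Donnelly's invariants together with the compatibility of orbifold charts. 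Your identification of the gluing step as the only genuinely new ingredient beyond the global-quotient case is accurate.
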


\subsection{Heat Kernel For 3-Dimensional Lens Spaces}

We define the normal coordinates for a three-sphere as follows \cite{Iv}: Consider a three-sphere of radius \textit{r},
$$\mathbb{S}^3(r) = \{(v_1, v_2, v_3, v_4) \in \mathbb{R}^4 : (v_1)^2 + (v_2)^2 + (v_3)^2 + (v_4)^2 = r^2\},$$ and let $(R, \psi, \theta, \phi)$ be the spherical coordinates in $\mathbb{R}^4$ where $R \in (0, \infty)$, $\psi \in [0, 2\pi]$, $\theta \in (0, \pi]$ and $\phi \in (0, \pi]$. These coordinates are connected with the standard coordinate system $(u_1, u_2, u_3, u_4)$ in $\mathbb{R}^4$ by the following equations:
\begin{align}\label{eq:1}
\textstyle &u_1 = R\sin\psi\sin\theta\cos\phi, \notag \\
\textstyle &u_2 = R\sin\psi\sin\theta\sin\phi, \notag \\
\textstyle &u_3 = R\sin\psi\cos\theta, \notag \\
\textstyle &u_4 = R\cos\psi.
\end{align}

\noindent The equation of $\mathbb{S}^3(r)$ in these coordinates is $R^2 = r^2$. The functions $x_1 = \psi$, $x_2 = \theta$, and $x_3 = \phi$ provide an internal coordinate system on $\mathbb{S}^3(r)$ (without one point) in which the metric \textit{g} induced on $\mathbb{S}^3(r)$ from $\mathbb{E}^3$ has components $g_{ij}$ such that

\[ (g_{ij}) = 
\begin{pmatrix}
r^2 &&  \text{ {\huge 0}} \\
& r^2\sin^2\psi & \\
\text{ {\huge 0}} && r^2\sin^2\psi\sin^2\theta
\end{pmatrix}.
\]

\noindent \textit{g} induces on $\mathbb{S}^3(r)$ a Riemannian connection $\bigtriangledown$. Using the formula 

\[\Gamma^m_{ij} = \frac{1}{2}g^{ml}[\partial_j g_{il} + \partial_i g_{lj} - \partial_l g_{ji}],\]
\noindent we can calculate the Christoffel symbols, which are as follows:

\noindent $\Gamma^2_{21} = \Gamma^2_{12} = \cot\psi$, $\Gamma^3_{31} = \Gamma^3_{13} = \cot\psi$, $\Gamma^3_{32} = \Gamma^3_{23} = \cot\theta$,
$\Gamma^1_{22} = -\sin\psi\cos\psi$, $\Gamma^1_{33} = -\sin\psi\cos\psi\sin^2\theta$, $\Gamma^2_{33} = -\sin\theta\cos\theta$. All the other symbols are zero.

Now let $\gamma: [0,2\pi] \rightarrow \mathbb{S}^3(r)$ be a path in $\mathbb{S}^3(r)$ such that $x_i\circ\gamma = \pi/2$ for $i = 1, 2$ and $x_3\circ\gamma = id|_{[0,2\pi]}$. Since $\cos\pi/2 = \cot\pi/2 = 0$ and $\sin\pi/2 = 1$ we have $\Gamma^i_{jk}|_{\gamma([0, 2\pi])} = 0$, and consequently, if we take $R = r = 1$, we get $g_{ij} = \delta_i^j$. Therefore, the coordinate system $\{x_1, x_2, x_3\}$ and the frame $\{\partial/\partial x_1, \partial/\partial x_2, \partial/\partial x_3\}$ are normal for $\bigtriangledown$ along the path $\gamma$.

From the Equations (\ref{eq:1}) it is clear that the set $\gamma([0, 2\pi])$ is a circle obtained by intersecting $\mathbb{S}^3(r)$ with the $(v_1, v_2)-$plane $\{v\in\mathbb{R}^4: v_i(p) = 0  \text{ for} i\geq3\}$ in $\mathbb{R}^4$. In fact, we have 
\[\gamma([0,2\pi])  = \{(v_1, v_2, 0, 0)\in \mathbb{R}^4: v_1^2 + v_2^2 = r^2\} =\mathbb{S}^1(r) \times (0, 0).\]

It is clear if \textit{C} is a circle on $\mathbb{S}^3(r)$ obtained by intersecting $\mathbb{S}^3(r)$ by a 2-plane through its origin then there are coordinates on $\mathbb{S}^3(r)$ normal along \textit{C} for the Riemannian connection considered above.

We will assume $r = 1$. Then, using the above normal coordinate system, and the formulas
\begin{align*}
&R^{i}_{jlm} = \partial_l \Gamma^i_{mj} - \partial_m \Gamma^i_{lj} + \Gamma^k_{mj}\Gamma^i_{lk} - \Gamma^k_{lj}\Gamma^i_{km},\\
&R_{abcd} = g_{aj}R^j_{bcd},
\end{align*}
\noindent we calculate the values of the curvature as follows:
\begin{align*}
&R_{1212} = R_{\psi\theta\psi\theta} = \sin^{2}\psi,\\
&R_{1313} = R_{\psi\phi\psi\phi} = \sin^{2}\psi\sin^2\theta,\\
&R_{2323} = R_{\theta\phi\theta\phi} = \sin^{4}\psi\sin^2\theta.
\end{align*}
All other values are zero.  The values of the Ricci tensor, calculated by $\rho_{ab} = R^c_{acb}$, are as follows:
\begin{align*}
\textstyle &\rho_{11} = \rho_{\psi\psi} = 2,\\
\textstyle &\rho_{22} = \rho_{\theta\theta} = 2\sin^2\psi,\\
\textstyle &\rho_{33} = \rho_{\phi\phi} = 2\sin^2\psi\sin^2\theta.
\end{align*}

\noindent All other values are zero. We then calculate the scalar curvature as follows:
\[\tau = g^{\psi\psi}\rho_{\psi\psi} + g^{\theta\theta}\rho_{\theta\theta} + g^{\phi\phi}\rho_{\phi\phi} = 6.\]
\noindent Since $\tau$ is constant all its covariant derivatives, $\tau_{; j}$ are zero. Using $\rho_{ab;m} = \partial_m\rho_{ab} - \rho_{lb}\Gamma^{l}_{ma} - \rho_{al}\Gamma^{l}_{mb}$, we also calculate all the covariant derivatives of the Ricci tensor, which turn out to be zero as well.

Let $e_1 = (1, 0, 0, 0)$, $e_2 = (0, 1, 0, 0)$, $e_3 = (0, 0, 1, 0)$ and $e_4 = (0, 0, 0, 1)$ be the standard basis in $\mathbb{R}^4$. We define the following two subsets:
\[N_a = \Big\{(x, y, 0, 0): x^2 + y^2 = 1\Big\} \subset \mathbb{R}^4 \text{    and   } N_b = \Big\{(0, 0, z, w): z^2 + w^2 = 1\Big\} \subset \mathbb{R}^4.\]

The tangent space $T_{e_1}\mathbb{S}^3$, has basis vectors $\{e_2, e_3, e_4\}$ such that \{$e_2\}$ is a basis for $T_{e_1}N_a$ and $\{e_3, e_4\}$ is a basis for $T_{e_1}N_a^\perp$. Similarly, the tangent space $T_{e_4}\mathbb{S}^3$, has basis vectors $\{e_1, e_2, e_3\}$ such that \{$e_3\}$ is a basis for $T_{e_4}N_b$ and $\{e_1, e_2\}$ is a basis for $T_{e_4}N_b^\perp$. We will now calculate the values for $b_0(f, a)$ and $b_1(f, a)$. Suppose $O = \mathbb{S}^3/G$ is an orbifold lens space where $G = <\gamma>$ and 
\[
\gamma =  \begin{pmatrix} 
R(\frac{\hat{p_1}}{q}) & 0 \\\\                   
0 & R(\frac{\hat{p_2}}{q})
\end{pmatrix}, 
\]

\noindent where $\hat{p_1} \not \equiv \pm \hat{p_2} \, (\text{mod} \, q)$. Suppose  $gcd(\hat{p_1}, q) = q_1$ and $gcd(\hat{p_2}, q) = q_2$, so that $\hat{p_1} = p_1q_1$, $\hat{p_2} = p_2q_2$ and $q = \hat{\alpha} q_1 = \hat{\beta} q_2$. Suppose $gcd(\hat{\alpha}, \hat{\beta}) = g$ so that 
$\hat{\alpha} = \alpha g$,  $\hat{\beta} = \beta g$ and $gcd(\alpha, \beta) = 1$. This means we can write $\gamma$ as 
		
\[
\gamma =  \begin{pmatrix} 
R(\frac{p_1}{\alpha g}) & 0 \\\\                   
0 & R(\frac{p_2}{\beta g})
\end{pmatrix}. 
\]

Now 

\[
\gamma^{\hat\alpha} =  \begin{pmatrix} 
I_2 & 0 \\\\                   
0 & R(\frac{p_2\alpha}{\beta}) 
\end{pmatrix} 
\]

\noindent fixes $N_a$, and 
\[
\gamma^{\hat\beta} =  \begin{pmatrix} 
R(\frac{p_1\beta}{\alpha}) & 0 \\\\                   
0 & I_2
\end{pmatrix} 
\]

\noindent fixes $N_b$, where $I_2$ is the $2\times 2$ identity matrix.

Note that since the group action is transitive and the fixed point sets are $\mathbb{S}^1$, the functions $b_k(.,.)$ are constant along these fixed circles. Therefore, it suffices to consider just a single point in these fixed point sets to calculate the values of the functions. We will choose the points $e_1\in N_a$ and $e_4\in N_b$ to calculate the values of functions. 

We have, in the notation of the Theorem \ref{thm:1}, $\tilde{N_a}\cong\mathbb{S}^1\times\{(0, 0)\}$ 
and $\tilde{N_b}\cong\{(0, 0)\}\times\mathbb{S}^1$.  Also, $Iso_{N_a} = \{1, \gamma^{\hat\alpha}$, $\gamma^{2\hat\alpha}, ... \gamma^{(\beta - 1)\hat\alpha}\}$, 
$|Iso_{N_a}| = \beta$, 
$Iso_{N_b} = \{1, \gamma^{\hat\beta}, \gamma^{2\hat\beta}, ... \gamma^{(\alpha - 1)\hat\beta}\}$ and 
$|Iso_{N_b}| = \alpha.$

We now use Theorem \ref{thm:1} to calculate the heat trace asymptotic for \textit{O} using the formula $I_0 + \frac{I_{N_a}}{\beta} + \frac{I_{N_b}}{\alpha}$ where
\[	I_0 = (4\pi t)^{-dim(O)/2}\sum_{k=0}^{\infty}a_k (O){t^k}
	= (4\pi t)^{-dim(O)/2}\sum_{k=0}^{\infty}\frac{1}{|G|}a_k (\mathbb{S}^3){t^k}\]
\[	= \frac{(4\pi t)^{-3/2}}{q}\sum_{k=0}^{\infty}\frac{\sqrt{\pi}}{4k!}t^k
	= \frac{(4t)^{-3/2}}{4q\pi}\sum_{k=0}^{\infty}\frac{t^k}{k!}
	= \frac{t^{-3/2}}{32q\pi}e^t,\]
and for $i \in {a, b}$, 
\begin{align*}
I_{N_i} &= (4\pi t)^{-dim(N_i)/2}\sum_{k=0}^{\infty}{t^k}\int _{N_i} b_k(N_i,x)d vol_{N_i}(x)\\
      &= \frac{(\pi t)^{-1/2}}{2}\sum_{k=0}^{\infty}{t^k}\int _{\tilde{N_i}} b_k(\tilde{N_i},x)d vol_{\tilde{N_i}}(x) \text{,  since } \tilde{N_i}\rightarrow N_i \text{ is trivial in this case}\\
      &= \frac{(\pi t)^{-1/2}}{2}\sum_{k=0}^{\infty}{t^k}2\pi b_k(\tilde{N_i},x) \text{ (for any choice of \textit{x} by homogeneity)}\\
&= \sqrt{\pi}t^{-1/2}\sum_{k=0}^{\infty}{t^k}b_k(\tilde{N_i},x) \text{  , where  }
b_k(\tilde{N_i},x) = \sum_{\gamma\in Iso^{max}\tilde{N_i}}b_k(\gamma, x).
\end{align*}
Now for $a = e_1$ and $r \in \{1, 2, ...(\beta - 1)\}$,
\begin{align*}
B_{\gamma^{r\hat{\alpha}}}(a) = (I - A_{\gamma^{r\hat{\alpha}}}(a))^{-1} &= \frac{1}{4\sin^2\frac{p_2\pi\alpha r}{\beta}}\begin{pmatrix} 
1 - \cos \frac{2p_2\pi\alpha r}{\beta} & -\sin \frac{2p_2\pi\alpha r}{\beta}\\\\                    
\sin \frac{2p_2\pi\alpha r}{\beta} & 1 - \cos \frac{2p_2\pi\alpha r}{\beta}
\end{pmatrix} \\\\
&= \frac{1}{2}\begin{pmatrix} 
1 & -\cot \frac{p_2\pi\alpha r}{\beta}\\\\                    
\cot \frac{p_2\pi\alpha r}{\beta} & 1
\end{pmatrix}.
\end{align*}

So, $|det B_{\gamma^{r\hat{\alpha}}}(a)| = \frac{1}{4}(1 + \cot^2\frac{p_2\pi\alpha r}{\beta}) = \frac{1}{4\sin^2\frac{p_2\pi\alpha r}{\beta}}$.

Similarly we can show that for $b = e_4$ and $r \in \{1, 2, ...(\alpha - 1)\}$,
\[
B_{\gamma^{r\hat{\beta}}}(b) = \frac{1}{2}\begin{pmatrix} 
1 & -\cot \frac{p_1\pi\beta r}{\alpha}\\\\                    
\cot \frac{p_1\pi\beta r}{\alpha} & 1
\end{pmatrix}, 
\]

and $|det B_{\gamma^{r\hat{\beta}}}(b)| = \frac{1}{4}(1 + \cot^2\frac{p_1\pi\beta r}{\alpha}) = \frac{1}{4\sin^2\frac{p_1\pi\beta r}{\alpha}}$.

We will now calculate $b_i(\tilde{N_j}, .)$ for $i = 0, 1$ and $j = a, b$:

\[
b_0(\gamma^{r\hat{\alpha}}, a) = |det B_{\gamma^{r\hat{\alpha}}}(a)| = \frac{1}{4}(1 + \cot^2\frac{p_2\pi\alpha r}{\beta}) = \frac{1}{4\sin^2\frac{p_2\pi\alpha r}{\beta}}.
\]
So, 
\begin{align*}
 b_0(\tilde{N_a}, a) &= \sum_{f\in Iso^{max}\tilde{N_a}}^{}b_0(f, a)\\
 &= \sum_{r=1}^{\beta - 1}b_0(\gamma^{r\hat{\alpha}}, a) \\
 &= \sum_{r=1}^{\beta - 1}\frac{1}{4}(1 + \cot^2\frac{p_2\pi\alpha r}{\beta}) \\
 &= \sum_{r=1}^{\beta - 1}\frac{1}{4}(1 + \cot^2\frac{\pi r}{\beta}) \text{        , since $gcd(p_2\alpha, \beta) = 1$}\\
 &= \sum_{r=1}^{\beta - 1}\frac{1}{4\sin^2\frac{\pi r}{\beta}}\\
 &= \frac{\beta^2 - 1}{12}  \text{       , by lemma 5.4 in \cite{DGGW}}.
\end{align*}

\noindent We can similarly show that \[b_0(\tilde{N_b}, b) = \sum_{r=1}^{\alpha - 1}\frac{1}{4}(1 + \cot^2\frac{\pi r}{\alpha}) = \frac{\alpha^2 - 1}{12}.\]

We will now calculate $b_1(\tilde{N_a}, a)$ and $b_1(\tilde{N_b}, b)$. Note that for both $B_{\gamma^{r\hat{\alpha}}}(a)$ and $B_{\gamma^{r\hat{\beta}}}(b)$, $B_{13} = B_{23} = B_{31} = B_{32} = B_{33} = 0$. Using the formula in Theorem \ref{thm:0}, we get
\begin{align*}	
b_1(\gamma^{r\hat{\alpha}}, a) = \frac{|det(B_{\gamma^{r\hat{\alpha}}}(a))|}{3}\Big\{&R_{1212}\Big[2 - \frac{1}{4}(\cot\theta_r - \cot\theta_r)^2 - (\frac{1}{2} + \frac{1}{2})^2 -2((\frac{1}{4} + \frac{1}{4})\Big] \\
&+ R_{1313}\Big[2 - (\frac{1}{2} + 0)^2 - 2(\frac{1}{4} +0) - 3(\frac{1}{4}\cot^2\theta_r +0)\Big] \\
&+ R_{2323}\Big[2 - (\frac{1}{2} + 0)^2 - 2(\frac{1}{4} +0) - 3(\frac{1}{4}\cot^2\theta_r +0)\Big] \Big\}, 
\end{align*}
\noindent which gives
\begin{align*}
b_1(\gamma^{r\hat{\alpha}}, a)&= \frac{1}{12}(1 + \cot^2\theta_r)\Big\{R_{1313}\Big(2 - \frac{3}{4} - \frac{3}{4}\cot^2\theta_r)\Big) + R_{2323}\Big(2 - \frac{3}{2} - \frac{3}{4}\cot^2\theta_r)\Big)\Big\}\\
&= \frac{1}{12}(1 + \cot^2\theta_r)(R_{1313} + R_{2323})[2 - \frac{3}{4}(1 + \cot^2\theta_r)]\\
&= (R_{1313} + R_{2323})\Big[\frac{1}{6}(1 + \cot^2\theta_r) - \frac{1}{16}(1 + \cot^2\theta_r)^2\Big]\\
&= (R_{1313} + R_{2323})\Big[\frac{1}{6\sin^2\theta_r} - \frac{1}{16\sin^2\theta_r}\Big],
\end{align*}
\noindent where $\theta_r = \frac{p_2\pi\alpha r}{\beta}$.
 
 So, 
 \begin{align*}
 b_1(\tilde{N_a}, a) &= \sum_{r=1}^{\beta - 1}b_1(\gamma^{r\hat{\alpha}}, a) \\
 &= \sum_{r=1}^{\beta - 1}(R_{1313} + R_{2323})\Big[\frac{1}{6\sin^2\frac{p_2\pi\alpha r}{\beta}} - \frac{1}{16\sin^2\frac{p_2\pi\alpha r}{\beta}}\Big]\\
 &= (R_{1313} + R_{2323})\Big[\frac{1}{6}\sum_{r=1}^{\beta - 1}\frac{1}{\sin^2\frac{\pi r}{\beta}} - \frac{1}{16}\sum_{r=1}^{\beta - 1}\frac{1}{\sin^4\frac{\pi r}{\beta}}\Big],
 \end{align*}
 since $gcd(p_2\alpha, \beta) = 1$. 
 
 \noindent Also, $\sum_{r=1}^{\beta - 1}\frac{1}{\sin^2\frac{\pi r}{\beta}} = \frac{\beta^2 - 1}{3}$ and $\sum_{r=1}^{\beta - 1}\frac{1}{\sin^4\frac{\pi r}{\beta}} = \frac{\beta^4 + 10\beta^2 - 11}{45}$ (see \cite{DGGW}). So we get
 \begin{align*}
 b_1(\tilde{N_a}, a) &= (R_{1313} + R_{2323})\Big(\frac{\beta^2 - 1}{18} - \frac{\beta^4 + 10\beta^2 - 11}{720}\Big)\\
 &= -(R_{1313} + R_{2323})\frac{(\beta^2 - 29)(\beta^2 - 1)}{720}.
\end{align*}

We can similarly show that 
 \begin{align*}
 	b_1(\tilde{N_b}, b) &= (R_{1313} + R_{2323})\Big(\frac{\alpha^2 - 1}{18} - \frac{\alpha^4 + 10\alpha^2 - 11}{720}\Big)\\
 	&= -(R_{1313} + R_{2323})\frac{(\alpha^2 - 29)(\alpha^2 - 1)}{720}.
 \end{align*}
 
 Using Theorem \ref{thm:1} we now calculate the first few coefficients of the asymptotic expansion as follows:
\begin{align*}
&I_0 + \frac{I_{N_a}}{|Iso(N_a)|} + \frac{I_{N_b}}{|Iso(N_b)|}\\
=& \frac{t^{-3/2}}{32q\pi}e^t + \frac{(\pi t)^{-1/2}}{\beta}\Big[t^0\pi b_0(\tilde{N_a}, a) + t^1\pi b_1(\tilde{N_a}, a) + ...\Big]\\
&+ \frac{(\pi t)^{-1/2}}{\alpha}\Big[t^0\pi b_0(\tilde{N_b}, b) + t^1\pi b_1(\tilde{N_b}, b) + ...\Big]\\
=& \frac{t^{-3/2}}{32q\pi}(1 + t + \frac{t^2}{2} + \frac{t^3}{6} + \frac{t^4}{24} + ...) + \Big(\frac {b_0(\tilde{N_a}, a)}{\beta} + \frac {b_0(\tilde{N_b}, b)}{\alpha}\Big)\sqrt{\pi}t^{-1/2}\\
&+ \Big(\frac {b_1(\tilde{N_a}, a)}{\beta} + \frac {b_1(\tilde{N_b}, b)}{\alpha}\Big)\sqrt{\pi}t^{1/2} + ... 
\end{align*}
  
\noindent From this, the coefficient of $t^{-3/2}$ is $\frac{1}{32q\pi}$;\\
\noindent the coefficient of $t^{-1/2}$ is 
 \[
  \frac{1}{32q\pi} + \frac {b_0(\tilde{N_a}, a)}{\beta}\sqrt{\pi} + \frac {b_0(\tilde{N_b}, b)}{\alpha}\sqrt{\pi} = \frac{1}{32q\pi} + \frac{\sqrt{\pi}}{12\beta}(\beta^2 - 1) + \frac{\sqrt{\pi}}{12\alpha}(\alpha^2 - 1);
 \]
\noindent and the coefficient of $t^{1/2}$  is
  \[
  \frac{1}{64q\pi} - \frac{\sqrt{\pi}(R_{1313} + R_{2323})[\alpha(\beta^2 - 29)(\beta^2 - 1) + \beta(\alpha^2 - 29)(\alpha^2 - 1)]}{720\alpha\beta};
  \]

The above results show that the coefficients are dependent on $\alpha$, $\beta$ and the curvature tensor and its covariant derivatives. Since all lens spaces are finitely covered by $\mathbb{S}^3$, the parts of the coefficients that consist of the curvature tensor and its covariant derivatives will be the same for all lens spaces. The only difference will therefore be in the terms containing $\alpha$ and $\beta$. We can rewrite 
\begin{align*}
b_0(\tilde{N_a}, a) &= \sum_{r=1}^{\beta - 1}\frac{1}{4}(1 + \cot^2\frac{p_2\pi\alpha r}{\beta}) = \sum_{r=1}^{\beta - 1}\frac{1}{4} + \sum_{r=1}^{\beta - 1}\frac{1}{4}\cot^2\frac{p_2\pi\alpha r}{\beta}, \\
b_0(\tilde{N_b}, b) &= \sum_{r=1}^{\alpha - 1}\frac{1}{4}(1 + \cot^2\frac{p_1\pi\beta r}{\alpha}) = \sum_{r=1}^{\alpha - 1}\frac{1}{4} + \sum_{r=1}^{\alpha - 1}\frac{1}{4}\cot^2\frac{p_1\pi\beta r}{\alpha},
\end{align*}
\begin{align*}
b_1(\tilde{N_a}, a) =& \sum_{r=1}^{\beta - 1}(R_{1313} + R_{2323})\Big[\frac{1}{6}(1 + \cot^2\frac{p_2\alpha\pi r}{\beta}) - \frac{1}{16}(1 + \cot^2\frac{p_2\alpha\pi r}{\beta})^2\Big]\\
=& \sum_{r=1}^{\beta - 1}\frac{5(R_{1313} + R_{2323})}{48} + \sum_{r=1}^{\beta - 1}\Big(\frac{R_{1313} + R_{2323}}{24}\Big)\cot^2\frac{p_2\alpha\pi r}{\beta}\\
&- \sum_{r=1}^{\beta - 1}\Big(\frac{R_{1313} + R_{2323}}{16}\Big)\cot^4\frac{p_2\alpha\pi r}{\beta},\\
b_1(\tilde{N_b}, b) =& \sum_{r=1}^{\alpha - 1}(R_{1313} + R_{2323})\Big[\frac{1}{6}(1 + \cot^2\frac{p_1\beta\pi r}{\alpha}) - \frac{1}{16}(1 + \cot^2\frac{p_1\beta\pi r}{\alpha})^2\Big]\\
=& \sum_{r=1}^{\alpha - 1}\frac{5(R_{1313} + R_{2323})}{48} + \sum_{r=1}^{\alpha - 1}\Big(\frac{R_{1313} + R_{2323}}{24}\Big)\cot^2\frac{p_1\beta\pi r}{\alpha}\\
&- \sum_{r=1}^{\alpha - 1}\Big(\frac{R_{1313} + R_{2323}}{16}\Big)\cot^4\frac{p_1\beta\pi r}{\alpha},
\end{align*}

\noindent Note that each $b_j(\tilde{N_a}, a)$, $(j = 0, 1)$ is of the form 
\[
b_j(\tilde{N_a}, a) = \sum_{r=1}^{\beta - 1}\sum_{i=1}^{A_j}C^a_{ij}(R)\cot^{\lambda_i}\frac{p_2\alpha\pi r}{\beta},
\]
\noindent where $A_j$ is the finite number of monomials in the powers of $\cot\frac{p_2\alpha\pi r}{\beta}$, and for each $i$, $C^a_{ij}(R)$ are constant functions in terms of the curvature tensor and its covariant derivatives of the covering space, i.e. the sphere. Since $gcd(p_2\alpha, \beta) = 1$, and we are summing over $r$ as it ranges from $1$ to $\beta -1 $, we can write
\[
b_j(\tilde{N_a}, a) = \sum_{r=1}^{\beta - 1}\sum_{i=1}^{A_j}C^a_{ij}(R)\cot^{\lambda_i}\frac{\pi r}{\beta}.
\]
\noindent Similarly, since $gcd(\alpha, p_1\beta) = 1$, we can write
\[
b_j(\tilde{N_b}, b) = \sum_{r=1}^{\alpha - 1}\sum_{i=1}^{A_j}C^b_{ij}(R)\cot^{\lambda_i}\frac{\pi r}{\alpha}.
\]

More generally, for any \textit{k}, the functions $b_k(\gamma^{r\hat{\alpha}}, a)$ and $b_k(\gamma^{r\hat{\beta}}, a)$ are universal polynomials in the components of the curvature tensor, its covariant derivatives and the elements of $B_{\gamma^{r\hat{\alpha}}}(a)$ and $B_{\gamma^{r\hat{\beta}}}(b)$ respectively. Since the elements of $B_{\gamma^{r\hat{\alpha}}}(a)$ are $B_{11} = B_{22} = 1/2$, $B_{12} = -\frac{1}{2}\cot^{\lambda_i}\frac{p_2\alpha\pi r}{\beta}$ and $B_{21} = \frac{1}{2}\cot^{\lambda_i}\frac{p_2\alpha\pi r}{\beta}$, every $b_k(\gamma^{r\hat{\alpha}}, a)$ will be of the form $\sum_{i=1}^{A_j}C^a_{ij}(R)\cot^{\lambda_i}\frac{p_2\alpha\pi r}{\beta}$. This means that for each $k$, we will have, 
\[
b_k(\tilde{N_a}, a) = \sum_{r=1}^{\beta - 1}\sum_{i=1}^{A_k}C^a_{ik}(R)\cot^{\lambda_i}\frac{\pi r}{\beta},
\]
and similarly, 
\[
b_k(\tilde{N_b}, b) = \sum_{r=1}^{\alpha - 1}\sum_{i=1}^{A_k}C^b_{ik}(R)\cot^{\lambda_i}\frac{\pi r}{\alpha}.
\]
This observation gives us the following lemma for three-dimensional orbifold lens spaces:
\begin{lemma}
Given two orbifold lens spaces $O_1 = \mathbb{S}^3/G_1$ and $O_2 = \mathbb{S}^3/G_2$, such that $G_1 = <\gamma_1>$ and $G_2 = <\gamma_2>$ where 
\[
\gamma_1 =  \begin{pmatrix} 
R({\frac{\hat{p_1}}{q}}) & 0 \\\\                    
0 & R({\frac{\hat{p_2}}{q}})
\end{pmatrix}
\]

\noindent with $\hat{p_1} \not \equiv \pm \hat{p_2} \, (\text{mod} \, q)$, $gcd(\hat{p_1}, q) = q_{11}$, $gcd(\hat{p_2}, q) = q_{21}$, $\hat{p_1} = p_1 q_{11}$, $\hat{p_2} = p_2 q_{21}$, $q = \hat{\alpha_1} q_{11} = \hat{\beta_1} q_{21}$, $gcd(\hat{\alpha_1}, \hat{\beta_1}) = g_1$, $\hat{\alpha_1} = \alpha_1 g_1$,  $\hat{\beta_1} = \beta_1 g_1$, and 
\[
\gamma_2 =  \begin{pmatrix} 
R({\frac{\hat{s_1}}{q}}) & 0 \\\\                    
0 & R({\frac{\hat{s_2}}{q}})
\end{pmatrix}, 
\]
\noindent with $\hat{s_1} \not \equiv \pm \hat{s_2} \, (\text{mod} \, q)$, $gcd(\hat{s_1}, q) = q_{12}$, $gcd(\hat{s_2}, q) = q_{22}$, $\hat{s_1} = s_1q_{12}$, $\hat{s_2} = s_2q_{22}$, $q = \hat{\alpha_2} q_{12} = \hat{\beta_2} q_{22}$, $gcd(\hat{\alpha_2}, \hat{\beta_2}) = g_2$, $\hat{\alpha_2} = \alpha_2 g_2$,  $\hat{\beta_2} = \beta_2 g_2$.\\
Then $O_1 = \mathbb{S}^3/G_1$ and $O_2 = \mathbb{S}^3/G_2$ will have the exact same asymptotic expansion of the heat kernel if $\alpha_1 = \alpha_2$ and $\beta_1 = \beta_2$.
\end{lemma}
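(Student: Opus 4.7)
The plan is to apply the heat trace asymptotic of Theorem~\ref{thm:1}, which for each $O_i$ takes the form
\[
Z_{O_i}(t) \sim I_0 + \frac{I_{N_a^{(i)}}}{\beta_i} + \frac{I_{N_b^{(i)}}}{\alpha_i},
\]
and to argue term-by-term that every contribution depends on $G_i$ only through the triple $(q, \alpha_i, \beta_i)$.

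I would dispose of the regular contribution $I_0$ first: since $a_k(O_i) = q^{-1} a_k(\mathbb{S}^3)$ and the hypotheses tie both groups to the common denominator $q$, the two $I_0$ terms are immediately identical. Next, for the singular contributions I would invoke two pieces of structure already established above: each fixed-point set $N_a^{(i)}$ and $N_b^{(i)}$ is a geodesic circle of length $2\pi$ in $\mathbb{S}^3$ with isotropy orders $\beta_i$ and $\alpha_i$ respectively, and by homogeneity of the cyclic-group action the integrand $b_k(\tilde N, \cdot)$ is constant on each such circle. Consequently each $I_{N_a^{(i)}}$ (resp.\ $I_{N_b^{(i)}}$) is determined by the single value $b_k(\tilde N_a^{(i)}, a)$ (resp.\ $b_k(\tilde N_b^{(i)}, b)$) at any chosen base point.

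The heart of the argument is then the explicit formulas derived in the paragraphs preceding the lemma,
\[
b_k(\tilde N_a^{(i)}, a) = \sum_{r=1}^{\beta_i - 1}\sum_{j=1}^{A_k} C^a_{jk}(R)\,\cot^{\lambda_j}\!\frac{\pi r}{\beta_i}, \qquad b_k(\tilde N_b^{(i)}, b) = \sum_{r=1}^{\alpha_i - 1}\sum_{j=1}^{A_k} C^b_{jk}(R)\,\cot^{\lambda_j}\!\frac{\pi r}{\alpha_i},
\]
in which the constants $C^a_{jk}(R)$, $C^b_{jk}(R)$ are universal polynomials in the curvature tensor of $\mathbb{S}^3$ and are independent of $G_i$. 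From these expressions it is manifest that $b_k(\tilde N_a^{(i)}, a)$ depends on the group only through $\beta_i$, and $b_k(\tilde N_b^{(i)}, b)$ only through $\alpha_i$. Under the hypotheses $\alpha_1 = \alpha_2$, $\beta_1 = \beta_2$, all three summands therefore match between $O_1$ and $O_2$, yielding identical asymptotic expansions.

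The delicate step, and the one most worth flagging, is the coprimality verification $\gcd(p_2\alpha_i, \beta_i) = 1$ and $\gcd(p_1\beta_i, \alpha_i) = 1$, which is what lets one replace $\cot(p_2\alpha\pi r/\beta)$ by $\cot(\pi r/\beta)$ under the sum on $r$ (and symmetrically with the roles of $\alpha$ and $\beta$ swapped). This follows routinely from the maximality of $q_{11} = \gcd(\hat p_1, q)$ and $q_{21} = \gcd(\hat p_2, q)$, together with the given $\gcd(\alpha_i, \beta_i) = 1$, so the obstacle is not conceptual but rather the careful arithmetic bookkeeping of how the factors $g_i$, $q_{11}$, $q_{21}$ interact with $\alpha_i$ and $\beta_i$. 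Once this is checked, the lemma follows by combining the three term-by-term equalities.
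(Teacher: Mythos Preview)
Your proposal is correct and follows essentially the same approach as the paper: the lemma is presented there as an immediate consequence of the preceding computations, namely the decomposition $I_0 + I_{N_a}/\beta + I_{N_b}/\alpha$ together with the reduction of each $b_k(\tilde N_a,a)$ and $b_k(\tilde N_b,b)$ to sums of the form $\sum_r \cot^{\lambda_j}(\pi r/\beta)$ and $\sum_r \cot^{\lambda_j}(\pi r/\alpha)$ via the coprimality $\gcd(p_2\alpha,\beta)=\gcd(p_1\beta,\alpha)=1$. Your identification of this coprimality step as the only point requiring care matches the paper's treatment exactly.
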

This lemma gives us a tool to find examples of 3-dimensional orbifold 
lens spaces that are non-isometric (hence non-isospectral) but have 
the exact same asymptotic expansion of the heat kernel. 
\begin{ex}
Suppose $q = 195$, and consider the two lens spaces $O_1 = L(195: 3, 5)$ and $O_2 = L(195: 6, 35)$. Since there is no integer $l$ coprime to $195$ and no $e_i \in \{1, -1 \}$ such that $\{e_1 l 3, e_2 l 5\}$ is a permutation of $\{6, 35\}(\text{mod } q)$, $O_1$ and $O_2$ are not isometric (and hence non-isospectral). However, in the notation of the lemma above, $\hat{p_1} = 3$, $\hat{p_2} = 5$, $\hat{s_1} = 6$, $\hat{s_2} = 35$, $gcd(\hat{p_1}, q) = 3 = gcd(\hat{s_1}, q)$, $gcd(\hat{p_2}, q) = 5 = gcd(\hat{s_2}, q)$ and $q = 195 = 3\times 65 = 5\times 39$. So, $\hat{\alpha_1} = \hat{\alpha_2} = 65$ and $\hat{\beta_1} = \hat{\beta_2} = 39$, with $gcd(\hat{\alpha_i}, \hat{\beta_i}) = 13$ (for $i = 1, 2$) giving  $\alpha_1 = \alpha_2 = 5$ and $\beta_1 = \beta_2 = 3$. Therefore, $O_1 = L(195: 3, 5)$ and $O_2 = L(195: 6, 35)$ have the exact same asymptotic expansion.
\end{ex}

\subsection{Heat Kernel For 4-Dimensional Lens Spaces}

Similar to the three-dimensional case we can show the construction of examples in four-dimensional lens spaces where the lens spaces will not be isospectral but will have the exact same asymptotic expansion of the trace of the heat kernel. We define the normal coordinates for a four-sphere as follows \cite{Iv}: Consider a four-sphere of radius \textit{r},
$$\mathbb{S}^4(r) = \{(v_1, v_2, v_3, v_4, v_5) \in \mathbb{R}^5 : (v_1)^2 + (v_2)^2 + (v_3)^2 + (v_4)^2 + (v_5)^2 = r^2\},$$ 
and let $(R, \psi, \theta, \phi, t)$ be the spherical coordinates in $\mathbb{R}^5$ where $R \in (0, \infty)$, 
$\psi \in (0, \pi]$, $\theta \in (0, \pi]$, $\phi \in (0, \pi]$ and $t \in [0, 2\pi]$. These coordinates are connected with the standard coordinate system $(u_1, u_2, u_3, u_4, u_5)$ in $\mathbb{R}^5$ by the following equations:
\begin{align}\label{eq:1a}
\textstyle &u_1 = R\sin\psi\sin\theta\sin\phi\sin t, \notag \\
\textstyle &u_2 = R\sin\psi\sin\theta\sin\phi\cos t, \notag \\
\textstyle &u_3 = R\sin\psi\sin\theta\cos\phi, \notag \\
\textstyle &u_4 = R\sin\psi\cos\theta, \notag \\
\textstyle &u_5 = R\cos\psi.
\end{align}
\noindent The equation of $\mathbb{S}^4(r)$ in these coordinates is $R^2 = r^2$. The functions $x_1 = \psi$, $x_2 = \theta$, $x_3 = \phi$ and $x_4 = t$ provide an internal coordinate system on $\mathbb{S}^4(r)$ (without one point) in which the metric \textit{g} induced on $\mathbb{S}^4(r)$ from $\mathbb{E}^3$ has components $g_{ij}$ such that

\[ (g_{ij}) = 
\begin{pmatrix}
r^2 &&&  \text{ {\huge 0}} \\
& r^2\sin^2\psi && \\
&& r^2\sin^2\psi\sin^2\theta &\\
\text{ {\huge 0}} &&& r^2\sin^2\psi\sin^2\theta\sin^2\phi
\end{pmatrix}.
\]

\noindent As before, we calculate the values of the curvature tensor as follows:
\begin{align*}
&R_{1212} = R_{\psi\theta\psi\theta} = \sin^{2}\psi,\\
&R_{1313} = R_{\psi\phi\psi\phi} = \sin^{2}\psi\sin^2\theta,\\
&R_{1414} = R_{\psi t \psi t} = \sin^{2}\psi\sin^2\theta\sin^2\phi,\\
&R_{2323} = R_{\theta\phi\theta\phi} = \sin^{4}\psi\sin^2\theta,\\
&R_{2424} = R_{\theta t \theta t} = \sin^{4}\psi\sin^2\theta\sin^2\phi,\\
&R_{3434} = R_{\phi t \phi t} = \sin^{4}\psi\sin^4\theta\sin^2\phi.
\end{align*}
All other values are zero.  The values of the Ricci tensor, calculated by $\rho_{ab} = R^c_{acb}$, are as follows:
\begin{align*}
\textstyle &\rho_{11} = \rho_{\psi\psi} = 3,\\
\textstyle &\rho_{22} = \rho_{\theta\theta} = 3\sin^2\psi,\\
\textstyle &\rho_{33} = \rho_{\phi\phi} = 3\sin^2\psi\sin^2\theta,\\
\textstyle &\rho_{44} = \rho_{tt} = 3\sin^2\psi\sin^2\theta\sin^2\phi.
\end{align*}

\noindent All other values are zero. We then calculate the scalar curvature as follows:
\[\tau = g^{\psi\psi}\rho_{\psi\psi} + g^{\theta\theta}\rho_{\theta\theta} + g^{\phi\phi}\rho_{\phi\phi} + g^{tt}\rho_{tt}= 12.\]

Now, let $e_1 = (1, 0, 0, 0, 0)$, $e_2 = (0, 1, 0, 0, 0)$, $e_3 = (0, 0, 1, 0, 0)$, $e_4 = (0, 0, 0, 1, 0)$ and $e_5 = (0, 0, 0, 0, 1)$ be the standard basis in $\mathbb{R}^5$. We can then define the following two subsets:
\[N_a = \Big\{(x, y, 0, 0, v): x^2 + y^2 + v^2 = 1\Big\} \subset \mathbb{R}^5 \]
\text{    and   } 
\[N_b = \Big\{(0, 0, z, w, v): z^2 + w^2 + v^2 = 1\Big\} \subset \mathbb{R}^5.\]

The tangent space $T_{e_1}\mathbb{S}^4$, has basis vectors $\{e_2, e_3, e_4, e_5\}$ such that \{$e_2, e_5\}$ is a basis for $T_{e_1}N_a$ and $\{e_3, e_4\}$ is a basis for $T_{e_1}N_a^\perp$. Similarly, the tangent space $T_{e_4}\mathbb{S}^4$, has basis vectors $\{e_1, e_2, e_3, e_5\}$ such that \{$e_3, e_5\}$ is a basis for $T_{e_4}N_b$ and $\{e_1, e_2\}$ is a basis for $T_{e_4}N_b^\perp$.

Suppose $O = \mathbb{S}^4/G$ is an orbifold lens space where $G = <\gamma>$ and 
\[
\gamma =  \begin{pmatrix} 
R(\frac{\hat{p_1}}{q}) &&& \text{ {\huge 0}} \\\\                    
& R(\frac{\hat{p_2}}{q}) \\\\                    
\text{ {\huge 0}} &&& 1 \end{pmatrix}, 
\]

\noindent where $\hat{p_1} \not \equiv \pm \hat{p_2} \, (\text{mod} \, q)$. Suppose  $gcd(\hat{p_1}, q) = q_1$ and $gcd(\hat{p_2}, q) = q_2$, so that $\hat{p_1} = p_1q_1$, $\hat{p_2} = p_2q_2$ and $q = \hat{\alpha} q_1 = \hat{\beta} q_2$. Suppose $gcd(\hat{\alpha}, \hat{\beta}) = g$ so that 
$\hat{\alpha} = \alpha g$,  $\hat{\beta} = \beta g$ and $gcd(\alpha, \beta) = 1$. This means we can write $\gamma$ as 
		
\[
\gamma =  \begin{pmatrix} 
R(\frac{p_1}{\alpha g}) &&& \text{ {\huge 0}} \\\\                    
& R(\frac{p_2}{\beta g}) \\\\   
\text{ {\huge 0}} &&& 1 \end{pmatrix}. 
\]

Now 

\[
\gamma^{\hat\alpha} =  \begin{pmatrix} 
I_2 &&& \text{ {\huge 0}} \\\\                    
& R(\frac{p_2 \alpha}{\beta}) \\\\   
\text{ {\huge 0}} &&& 1 \end{pmatrix}
\]

\noindent fixes $N_a$, and 
\[
\gamma^{\hat\beta} =  \begin{pmatrix} 
R(\frac{p_1 \beta}{\alpha}) && \text{ {\huge 0}} \\\\ 
\text{ {\huge 0}} && I_3 \end{pmatrix}
\]

\noindent fixes $N_b$. Here $I_2$ and $I_3$ are the $2\times 2$ and $3\times 3$ identity matrices respectively.

As before, it suffices to consider just a single point in these fixed point sets to calculate the values of the functions. We will choose the points $e_1\in N_a$ and $e_4\in N_b$ to calculate the values of functions. 

We have, in the notation of the Theorem \ref{thm:1}, $\tilde{N_a}\cong\mathbb{S}^2\times\{(0, 0)\}$ 
and $\tilde{N_b}\cong\{(0, 0)\}\times\mathbb{S}^2$.  Also, $Iso_{N_a} = \{1, \gamma^{\hat\alpha}, \gamma^{2\hat\alpha}, ... \gamma^{(\beta - 1)\hat\alpha}\}$, 
$|Iso_{N_a}| = \beta$, 
$Iso_{N_b} = \{1, \gamma^{\hat\beta}, \gamma^{2\hat\beta}, ... \gamma^{(\alpha - 1)\hat\beta}\}$ and 
$|Iso_{N_b}| = \alpha.$

Now, as in the case of three-dimensional lens spaces, we have for $a = e_1$ and $r \in \{1, 2, ...(\beta - 1)\}$,
\begin{align*}
B_{\gamma^{r\hat{\alpha}}}(a) &= \frac{1}{2}\begin{pmatrix} 
1 & -\cot \frac{p_2\pi\alpha r}{\beta}\\\\                    
\cot \frac{p_2\pi\alpha r}{\beta} & 1
\end{pmatrix}.
\end{align*}

\noindent So, $|det B_{\gamma^{r\hat{\alpha}}}(a)| = \frac{1}{4}(1 + \cot^2\frac{p_2\pi\alpha r}{\beta}) = \frac{1}{4\sin^2\frac{p_2\pi\alpha r}{\beta}}$.

Similarly we can show that for $b = e_4$ and $r \in \{1, 2, ...(\alpha - 1)\}$,
\[
B_{\gamma^{r\hat{\beta}}}(b) = \frac{1}{2}\begin{pmatrix} 
1 & -\cot \frac{p_1\pi\beta r}{\alpha}\\\\                    
\cot \frac{p_1\pi\beta r}{\alpha} & 1
\end{pmatrix}, 
\]

\noindent and $|det B_{\gamma^{r\hat{\beta}}}(b)| = \frac{1}{4}(1 + \cot^2\frac{p_1\pi\beta r}{\alpha}) = \frac{1}{4\sin^2\frac{p_1\pi\beta r}{\alpha}}$. Note again that for both $B_{\gamma^{r\hat{\alpha}}}(a)$ and $B_{\gamma^{r\hat{\beta}}}(b)$, $B_{13} = B_{23} = B_{31} = B_{32} = B_{33} = B_{41} = B_{14} = B_{42} = B_{24} = B_{43} = B_{34} = B_{44} = 0$. This means that, just as in the case of three-dimensional lens spaces, for each $k$, we will have, 
\[
b_k(\tilde{N_a}, a) = \sum_{r=1}^{\beta - 1}\sum_{i=1}^{A_k}C^a_{ik}(R)\cot^{\lambda_i}\frac{\pi r}{\beta},
\]
and 
\[
b_k(\tilde{N_b}, b) = \sum_{r=1}^{\alpha - 1}\sum_{i=1}^{A_k}C^b_{ik}(R)\cot^{\lambda_i}\frac{\pi r}{\alpha}.
\]
Similar to the three-dimensional case, this observation gives us the following lemma:
\smallskip
\begin{lemma}
Given two orbifold lens spaces $O_1 = \mathbb{S}^4/G_1$ and $O_2 = \mathbb{S}^4/G_2$, such that $G_1 = <\gamma_1>$ and $G_2 = <\gamma_2>$ where 
\[
\gamma_1 =  \begin{pmatrix} 
R(\frac{\hat{p_1}}{q}) && \text{ {\huge 0}} \\\\                    
& R(\frac{\hat{p_2}}{q}) \\\\
\text{ {\huge 0}} && 1 \\\\
\end{pmatrix}
\]
with $\hat{p_1} \not \equiv \pm \hat{p_2} \, (\text{mod} \, q)$, $gcd(\hat{p_1}, q) = q_{11}$, $gcd(\hat{p_2}, q) = q_{21}$, $\hat{p_1} = p_1 q_{11}$, $\hat{p_2} = p_2 q_{21}$, $q = \hat{\alpha_1} q_{11} = \hat{\beta_1} q_{21}$, $gcd(\hat{\alpha_1}, \hat{\beta_1}) = g_1$, $\hat{\alpha_1} = \alpha_1 g_1$,  $\hat{\beta_1} = \beta_1 g_1$, and 
\[
\gamma_2 =  \begin{pmatrix} 
R(\frac{\hat{s_1}}{q}) && \text{ {\huge 0}} \\\\                    
& R(\frac{\hat{s_2}}{q}) \\\\
\text{ {\huge 0}} && 1 \\\\
\end{pmatrix}, 
\]
\noindent with $\hat{s_1} \not \equiv \pm \hat{s_2} \, (\text{mod} \, q)$, $gcd(\hat{s_1}, q) = q_{12}$, $gcd(\hat{s_2}, q) = q_{22}$, $\hat{s_1} = s_1q_{12}$, $\hat{s_2} = s_2q_{22}$, $q = \hat{\alpha_2} q_{12} = \hat{\beta_2} q_{22}$, $gcd(\hat{\alpha_2}, \hat{\beta_2}) = g_2$, $\hat{\alpha_2} = \alpha_2 g_2$,  $\hat{\beta_2} = \beta_2 g_2$.\\
Then $O_1 = \mathbb{S}^4/G_1$ and $O_2 = \mathbb{S}^4/G_2$ will have the exact same asymptotic expansion of the heat kernel 
if $\alpha_1 = \alpha_2$ and $\beta_1 = \beta_2$.
\end{lemma}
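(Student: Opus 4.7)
The plan is to carry over the three-dimensional argument almost verbatim, exploiting the fact that although the positive-dimensional singular strata here are copies of $\mathbb{S}^2$ rather than $\mathbb{S}^1$, the normal bundle to each stratum is still of rank two. Consequently the $2\times 2$ matrices $B_{\gamma^{r\hat\alpha}}(a)$ and $B_{\gamma^{r\hat\beta}}(b)$ retain exactly the structure computed in the three-dimensional case. First I would apply Theorem \ref{thm:1} to each orbifold $O_i$, splitting its heat-trace asymptotic into the bulk term $I_0^{(i)}$ and the two stratum contributions $I_{\tilde{N_a}^{(i)}}/\beta_i$ and $I_{\tilde{N_b}^{(i)}}/\alpha_i$. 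Since $|G_i|=\alpha_i g_i\beta_i=q$ and the gcd chain in the hypothesis together with $\alpha_1=\alpha_2$, $\beta_1=\beta_2$ forces $g_1=g_2$, the bulk terms $I_0^{(i)}=(4\pi t)^{-2}\sum_{k}\frac{1}{q}a_k(\mathbb{S}^4)t^k$ agree immediately.

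Second, I would reduce each stratum integral to a pointwise expression: the subgroup $SO(3)\subset SO(5)$ rotating the $3$-plane $\{(x,y,0,0,v)\}$ acts transitively on $\tilde{N_a}^{(i)}\cong\mathbb{S}^2$ and commutes with $\gamma_i^{r\hat\alpha_i}$ (which acts as the identity on this $3$-plane and as a rotation on the complementary normal $2$-plane), so the universality property of Donnelly's $b_k$ from Theorem \ref{thm:0}(ii) forces $b_k(\gamma_i^{r\hat\alpha_i},\cdot)$ to be constant on $\tilde{N_a}^{(i)}$ and the integral reduces to $4\pi\,b_k(\tilde{N_a}^{(i)},a)$ at any chosen $a$. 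The same reasoning handles $\tilde{N_b}^{(i)}$. Donnelly's formula then expresses $b_k(\gamma_i^{r\hat\alpha_i},a)$ as a universal polynomial in the entries of $B_{\gamma_i^{r\hat\alpha_i}}(a)$ and in curvature invariants of $\mathbb{S}^4$; since only the $(1,2)$- and $(2,1)$-entries of $B$ depend on the rotation angle, every such polynomial is a sum of terms of the shape $C^a_{jk}(R)\cot^{\lambda_j}\frac{p_2^{(i)}\alpha_i\pi r}{\beta_i}$. The hypothesis delivers $\gcd(\alpha_i,\beta_i)=1$ and, from $\gcd(\hat p_2^{(i)},q)=q_{2i}$ with $q=\beta_i g_i q_{2i}$, also $\gcd(p_2^{(i)},\beta_i)=1$; hence $\gcd(p_2^{(i)}\alpha_i,\beta_i)=1$ and the change of summation index $r\mapsto p_2^{(i)}\alpha_i r\pmod{\beta_i}$ rewrites
\[
b_k(\tilde{N_a}^{(i)},a)=\sum_{r=1}^{\beta_i-1}\sum_{j=1}^{A_k}C^a_{jk}(R)\cot^{\lambda_j}\frac{\pi r}{\beta_i},
\]
a function of $\beta_i$ and universal curvature invariants of $\mathbb{S}^4$ alone; symmetrically $b_k(\tilde{N_b}^{(i)},b)$ depends only on $\alpha_i$ and curvature. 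Thus under $\alpha_1=\alpha_2$ and $\beta_1=\beta_2$, every coefficient in the asymptotic expansion of $Z_1(t)$ matches that of $Z_2(t)$.

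The step I expect to be the main obstacle is the homogeneity reduction: one must exhibit a full transitive group of ambient isometries of $\tilde{N_a}^{(i)}\cong\mathbb{S}^2$ which commutes with $\gamma_i^{r\hat\alpha_i}$ before invoking universality of $b_k$. The $SO(3)$-argument sketched above works because $\gamma_i^{r\hat\alpha_i}$ acts trivially on the relevant $3$-plane, but this commutation must be stated carefully since the extension to $SO(5)$ is not entirely automatic. A subsidiary bookkeeping issue, not present in dimension three, is the $+1$ eigenvalue in the last coordinate of $\gamma_i$: this produces isolated fixed points $\pm e_5\in\mathbb{S}^4$ stabilised by all of $G_i$, giving a zero-dimensional stratum whose contribution must also be shown to depend only on $(\alpha_i,\beta_i,g_i)$. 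The same cotangent-bijection principle applies there, since the $4\times 4$ matrix $B_{\gamma_i^l}(\pm e_5)$ is block-diagonal with $2\times 2$ blocks of exactly the same form, and the two angular variables decouple over the product $\mathbb{Z}/\hat\alpha_i\mathbb{Z}\times\mathbb{Z}/\hat\beta_i\mathbb{Z}$ by a CRT-type argument.
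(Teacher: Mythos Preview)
Your treatment of the bulk term $I_0$ and of the two $2$-dimensional strata $\tilde{N_a}$, $\tilde{N_b}$ follows the paper's argument essentially verbatim, and that part is fine. You also go beyond the paper in flagging the zero-dimensional stratum at $\pm e_5$: the paper simply ignores these points, treating $\tilde{N_a}$ and $\tilde{N_b}$ as full $2$-spheres with uniform isotropy of order $\beta$ and $\alpha$ respectively, which is not a correct stratification once the extra $+1$ block in $\gamma_i$ forces every group element to fix $\pm e_5$.

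However, your proposed CRT-type fix for the $\pm e_5$ contribution does not go through. The diagonal map $\mathbb{Z}/q\mathbb{Z}\to\mathbb{Z}/\hat\alpha_i\mathbb{Z}\times\mathbb{Z}/\hat\beta_i\mathbb{Z}$ is injective but not surjective whenever $g_i=\gcd(\hat\alpha_i,\hat\beta_i)>1$; its image is the fibred subset $\{(a,b):a\equiv b\pmod{g_i}\}$, so the two cotangent arguments $\cot(\pi p_1 l/\hat\alpha_i)$ and $\cot(\pi p_2 l/\hat\beta_i)$ remain coupled through their common residue mod $g_i$. The separate bijections $a\mapsto p_1 a$ on $\mathbb{Z}/\hat\alpha_i\mathbb{Z}$ and $b\mapsto p_2 b$ on $\mathbb{Z}/\hat\beta_i\mathbb{Z}$ do not preserve this diagonal constraint unless $p_1\equiv p_2\pmod{g_i}$, which is nowhere assumed. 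Concretely, for $q=30$ the pair $L(30:2,3,0)$ and $L(30:2,9,0)$ satisfy $\alpha_1=\alpha_2=3$, $\beta_1=\beta_2=2$, $g_1=g_2=5$, yet a direct computation of $\sum_{l}\lvert\det B_{\gamma^l}(e_5)\rvert$ over $l\in\{1,\dots,29\}\setminus\{10,15,20\}$ gives visibly different values for the two spaces. Thus the zero-dimensional contribution is \emph{not} determined by $(\alpha,\beta,g)$ alone, so the gap you identified cannot be closed by the decoupling you sketch, and the lemma as stated appears to need an additional hypothesis (for instance $g_i=1$) or else a genuinely different treatment of the $\pm e_5$ stratum.
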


This lemma gives us a tool to find examples of 4-dimensional orbifold 
lens spaces that are non-isometric (hence non-isospectral) but have 
the exact same asymptotic expansion of the heat kernel.

\begin{ex}
Suppose $q = 195$, and consider the two lens spaces $O_1 = \tilde{L}_{1+} = L(195: 3, 5, 0)$ and $O_2 = \tilde{L}'_{1+} = L(195: 6, 35, 0)$ (using the notation from Lemma \ref{lemma422}). Since there is no integer $l$ coprime to $195$ and no $e_i \in \{1, -1 \}$ such that $\{e_1 l 3, e_2 l 5\}$ is a permutation of $\{6, 35\}(\text{mod } q)$, $O_1$ and $O_2$ are not isometric (and hence non-isospectral). However, in the notation of the lemma above, $\hat{p_1} = 3$, $\hat{p_2} = 5$, $\hat{s_1} = 6$, $\hat{s_2} = 35$, $gcd(\hat{p_1}, q) = 3 = gcd(\hat{s_1}, q)$, $gcd(\hat{p_2}, q) = 5 = gcd(\hat{s_2}, q)$ and $q = 195 = 3\times 65 = 5\times 39$. So, $\hat{\alpha_1} = \hat{\alpha_2} = 65$ and $\hat{\beta_1} = \hat{\beta_2} = 39$, with $gcd(\hat{\alpha_i}, \hat{\beta_i}) = 13$ (for $i = 1, 2$) giving  $\alpha_1 = \alpha_2 = 5$ and $\beta_1 = \beta_2 = 3$. Therefore, $O_1$ and $O_2$ have the exact same asymptotic expansion.
\end{ex}











\addtocontents{toc}{\protect\vspace{2ex}}

\end{document}